\newcommand{\R}{\mathbb{R}}
\newcommand{\Z}{\mathbb{Z}}
\newcommand{\bP}{\mathbb{P}}
\newcommand{\bE}{\mathbb{E}}
\DeclareMathOperator{\capac}{Cap}
\newtheorem{theorem}{Theorem}[section]
\newtheorem{lemma}{Lemma}[section]
\newtheorem{corollary}{Corollary}[section]
\newtheorem{proposition}{Proposition}[section]
\theoremstyle{definition}
\newtheorem{example}{Example}[section]
\newtheorem{definition}{Definition}[section]
\theoremstyle{remark}
\newtheorem{remark}{Remark}[section]
\title[$S$-transient subgraphs]{Hitting probabilities and uniformly $S$-transient subgraphs}
\author{Emily Dautenhahn}
\thanks{Partially supported by NSF grants DMS-1645643 and DMS-2054593.}
\address{Department of Mathematics, Cornell University}
\author{Laurent Saloff-Coste}
\thanks{Partially supported by NSF grant DMS-2054593.}
\address{Department of Mathematics, Cornell University}
\subjclass[2020]{Primary 60J10, 60G50}
\keywords{hitting probability, exit time, Harnack inequality, inner uniform, random walk, Markov chains}
\begin{document}

\begin{abstract}
We study the probability that a random walk started inside a subgraph of a larger graph exits that subgraph (or, equivalently, hits the exterior boundary of the subgraph). Considering the chance a random walk started in the subgraph never leaves the subgraph leads to a notion we call ``survival" transience, or $S$-transience. In the case where the heat kernel of the larger graph satisfies two-sided Gaussian estimates, we prove an upper bound on the probability of hitting the boundary of the subgraph. Under the additional hypothesis that the subgraph is inner uniform, we prove a two-sided estimate for this probability. The estimate depends upon a harmonic function in the subgraph. We also provide two-sided estimates for related probabilities, such as the harmonic measure (the chance the walk exits the subgraph at a particular point on its boundary). 
\end{abstract}

\maketitle

\section{Introduction}

In the study of  Markov chains, questions about hitting times (or exit times) of certain subsets are natural. In this paper, we are interested in discrete time random walks on countable graphs such as the square grid $\mathbb Z^d$.  Namely, we are motivated by the problem of studying random walks on  graphs that are obtained by gluing simpler graphs along particular subsets of vertices (as an example, think of $\mathbb Z^4$ and $\mathbb Z^5$ glued along their respective first coordinate axes). With this in mind, we investigate hitting times, hitting probabilities, and a related notion of transience for subgraphs of a larger graph (think  $\mathbb Z^4\setminus Z$  where $Z=\mathbb Z$ is embedded nicely into $\mathbb Z^4$) when the random walk on the underlying larger graph is assumed to have an iterated transition kernel satisfying (discrete) two-sided Gaussian estimates. We will call a graph satisfying such two-sided Gaussian estimates a Harnack graph (see Theorem \ref{VD_PI}).  There is much literature discussing two-sided Gaussian estimates on graphs and equivalent properties. See e.g., \cite{Barlow_graphs,TC_AG_GraphVol,TC_AG_Zucca_MaxPrin,Delmotte_PHI} and the references therein.

The examples we consider here stem from our goal to apply these results to the problem of gluing graphs along infinite subsets. In such settings, we are interested in whether it is certain the random walk will hit a subset $K$ or not.  We consider this as a sort of recurrence/transience question, although it is important to be careful with what is meant by these definitions. Here we define a notion of ``$S$-transience'' based on the probability $\psi_K(x)$ that a random walk started at vertex $x$ hits $K$ in finite time. This probability is of course $1 - \text{Esc}_K(x),$ where $\text{Esc}_K(x)$ is the probability $K$ is never hit. Considering the quantity $\text{Esc}_K(x)$ is related to the harmonic measure from infinity, $H_K(x)$, particularly in the case where $K$ is finite. Such questions are addressed for $\Z^d$ in Chapter 2 of \cite{lawler_intersections} and Section 25 of \cite{Spitzer}. Work of Boivin and Rau \cite{Boivin} considers the harmonic measure from infinity on weighted graphs; see also the references therein.  Moreover, questions of recurrence/transience are related to Wiener's test. However, none of these related results cover the precise situation of interest to us. 

One of our main theorems, Theorem \ref{psi-upperbd}, gives an upper bound on the hitting probability of a subset of a Harnack graph in terms of a ratio of volumes. Although this bound is not always optimal, it makes no assumptions about the geometry of the set we want to hit. This bound can be computed using only volume functions (which in practice are often easier to compute than other quantities). 

Our other main theorem, Theorem \ref{IU-psibd}, gives two-sided bounds on the hitting probability in terms of volumes and the harmonic profile $h$ (a special harmonic function). This theorem requires an additional significant geometric assumption (inner uniformity). We then obtain a partial analog to a well-known theorem that states that a Harnack space is transient (in the classical sense) if and only if \[\int^\infty [V(x,\sqrt{r})]^{-1} < + \infty\] for some/all points $x.$  Further, we can apply the same ideas as in the proof of Theorem \ref{IU-psibd} to get two-sided bounds on related quantities, such as the harmonic measure. 

The paper proceeds as follows. The rest of this section describes the setting of interest and introduces notation. Section \ref{upper} carefully defines what we mean by transience and gives an upper bound for the hitting probability of a set (Theorem \ref{psi-upperbd}). It concludes with several examples of applying the theorem to demonstrate its practicality. Section \ref{h-trans} introduces the well-known notion of $h$-transform which is used to give an upper and lower bound on the hitting probability in Theorem \ref{IU-psibd}. We also state several related corollaries and apply the theorem and corollaries to examples. Section \ref{wiener_proof} gives remarks on the relation between our results and Wiener's test. 

\subsection{General graph notation and random walks}
Let $\Gamma = (V,E)$ be a connected graph, where $E$ is a subset of the pairs of elements in $V.$ In other words, $\Gamma$ is a simple graph that does not contain loops or multiple edges. Any graphs appearing will be assumed to be simple and connected unless stated otherwise. 

On $\Gamma$ we take the usual graph distance $d$ based on the shortest path of edges between vertices, and we consider closed balls with respect to this distance: 
\[ B(x,r) = \{ y \in V : d(x,y) \leq r \} \quad \forall x \in V, \ r >0.\]

We also assume $\Gamma$ has a random walk structure given by edge weights (conductances) $\mu_{xy}$ and vertex weights (measure) $\pi(x)$ with the following properties:
\begin{itemize}
\item $\mu_{xy} \not = 0 \iff \{x,y\} \in E$ and $\mu_{xy} = \mu_{yx}$
(the edge weights are \emph{adapted to the edges} and \emph{symmetric}) 
\item $\sum_{y \sim x} \mu_{xy} \leq \pi(x) \quad \forall x \in V$ 
(the edge weights are \emph{subordinate} to the measure/vertex weights).
\end{itemize}
The notation $y \sim x$ means that the unordered pair $\{ x, y \}$ belongs to the edge set $E.$ The notation $y \simeq x$ means either $y \sim x$ or $y=x.$ We will use $V(x,r)$ to denote the volume (with respect to $\pi$) of $B(x,r).$

Given a graph, we can always impose such a random walk structure on it; for example, we can take $\mu_{xy} \equiv 1 \ \forall \{x,y\} \in E$ and $\pi(x) = \deg(x) \ \forall x \in V.$ We will refer to this particular structure as \emph{simple weights}. 

Under the above assumptions, we define a Markov kernel $\mathcal{K}$ on $\Gamma$ via: 
\begin{equation}\label{MarkovKernel} 
\mathcal{K}(x,y) = \begin{cases} \displaystyle{\frac{\mu_{xy}}{\pi(x)}}, & x \not = y \\[2ex] \displaystyle{1 - \sum_{z \sim x} \frac{\mu_{xz}}{\pi(x)}} , & x=y.\end{cases}
\end{equation}
Hence loops are not allowed in $\Gamma,$ but the random walk is allowed to stay in place. 
Note that the Markov kernel $\mathcal{K}$ is \emph{reversible} with respect to the measure $\pi$, that is,
\[ \mathcal{K}(x,y) \pi(x) = \mathcal{K}(y,x) \pi(y) \quad \forall x, y \in \Gamma.\]
The random walk structure on $\Gamma$ may be equivalently defined by either a given $\mu, \pi,$ in which case $\mathcal{K}$ is as in (\ref{MarkovKernel}), or by a given Markov kernel $\mathcal{K}$ with reversible measure $\pi.$ 

Let $\mathcal{K}^n(x,y)$ denote the $n$-th convolution power of $\mathcal{K}(x,y).$ Then if $(X_n)_{n \geq 0}$ denotes the random walk on $\Gamma$ driven by $\mathcal{K}$, we have $\mathcal{K}^n(x,y) = \bP^x(X_n = y).$ The quantity $\mathcal{K}^n(x,y)$ is not symmetric (in particular, $\mathcal{K}$ itself need not be symmetric), so we will often be interested in studying instead its \emph{transition density}, the \emph{heat kernel} of the random walk, given by 
\[ p(n,x,y) = p_n(x,y) = \frac{\mathcal{K}^n(x,y)}{\pi(y)}. \]

There are various hypotheses one may make about the weights that have nice consequences. Here we will make the hypothesis of controlled weights.

\begin{definition}[Controlled Weights]
We say a graph $\Gamma$ has \emph{controlled weights} if there exists a constant $C_c > 1$ such that 
\begin{equation}\label{controlled} \frac{\mu_{xy}}{\pi(x)} \geq \frac{1}{C_c} \quad \forall x \in \Gamma, \ y \sim x .\end{equation}
\end{definition}

This assumption implies that $\Gamma$ is locally uniformly finite (that is, there is a uniform bound on the degree of any vertex) and that for $x \sim y,$ we have $\mu_{xy} \approx \pi(x) \approx \pi(y).$ \textbf{Unless stated otherwise, we will assume all graphs appearing have controlled weights.}

\subsection{Harnack Graphs}

In this section we describe several further properties graphs $(\Gamma, \pi, \mu)$ may possess and some of the consequences of these properties. 

\begin{definition}[Doubling]
A graph is said to be \emph{doubling} if there exists a constant $D$ such that for all $r>0, \ x \in \Gamma,$ 
\begin{equation}
V(x,2r) \leq D V(x,r).
\end{equation}
\end{definition}

\begin{definition}[Poincar\'{e} Inequality]
We say that $\Gamma$ satisfies the \emph{Poincar\'{e} inequality} if there exist constants $C_p >0, \ \kappa \geq 1$ such that for all $r > 0, x \in \Gamma,$ and functions $f$ supported in $B(x,\kappa r),$ 
\begin{equation*}
\sum_{y \in B(x,r)} |f(y) - f_B|^2 \, \pi(y) \leq C_p \ r^2 \sum_{y,z \in B(x, \kappa r)} |f(y)-f(z)|^2 \, \mu_{yz},
\end{equation*}
where $f_B$ is the (weighted) average of $f$ over the ball $B=B(x,r),$ that is,
\[ f_B = \frac{1}{V(x,r)} \sum_{y \in B(x,r)} f(y)\, \pi(y).\]
\end{definition}

Under doubling, the Poincar\'{e} inequality with constant $\kappa \geq 1$ (which appears in $B(x, \kappa r)$ on the right hand side) is equivalent to the Poincar\'{e} inequality with $\kappa = 1.$

\begin{definition}[Uniformly Lazy]
We say a pair $(\pi, \mu)$ is \emph{uniformly lazy} if there exists $C_e \in (0,1)$ such that 
\[ \sum_{y \sim x} \mu_{xy} \leq (1-C_e) \pi(x) \quad \forall x \in V, \ y \sim x.\]
We say a Markov kernel $\mathcal{K}$ is \emph{uniformly lazy} if there exists $C_e \in (0,1)$ such that 
\[ \mathcal{K}(x,x) = 1 - \sum_{z \sim x} \frac{\mu_{xz}}{\pi(x)} \geq  C_e \quad \forall x \in \Gamma. \]
\end{definition}

These two conditions are equivalent. In this case, the Markov chain is aperiodic. For instance, to turn a simple random walk ($\mu_{xy} \equiv 1$ and $\pi(x) = \text{deg}(x)$) into a lazy walk, just take $\mu_{xy} \equiv 1, \ \pi(x) = 2 \text{deg}(x).$ \textbf{Unless stated otherwise, we will consider all random walk structures appearing to be uniformly lazy.}

\begin{definition}[Harmonic Function]
A function $h: \Gamma \to \R$ is \emph{harmonic} (with respect to $\mathcal{K}$) if 
\begin{equation}\label{harmonic} h(x)  = \sum_{y \in \Gamma} \mathcal{K}(x,y) h(y) \quad \forall x \in \Gamma.\end{equation}
Given a subset $\Omega$ of $\Gamma$ (usually a ball), $h$ is harmonic on that set if (\ref{harmonic}) holds for all points in $\Omega$; this requires that $h$ be defined on $\{v\in \Gamma: \exists \omega\in \Omega, v\simeq \omega\}$. 
\end{definition}
As $\mathcal{K}(x,y) = 0$ unless $y \simeq x,$ the sum over $y \in \Gamma$ in (\ref{harmonic}) can be replaced by a sum over $y \simeq x.$ 

\begin{definition}[Elliptic Harnack Inequality]
A graph $(\Gamma, \pi, \mu)$ satisfies the \emph{ellliptic Harnack inequality} if there exist $\eta \in (0,1),\ C_H>0$ such that for all $r>0, \ x_0 \in \Gamma,$ and all non-negative harmonic functions $h$ on $B(x_0, r),$ we have 
\[ \sup_{B(x_0,\, \eta r)} h \leq C_H \inf_{B(x_0,\, \eta r)} h. \]
\end{definition}

\begin{definition}[Solution of Discrete Heat Equation]
A function $u: \Z_{+} \times \Gamma \to \R$ \emph{solves the discrete heat equation} if
\begin{equation}\label{heat_eqn} u(n+1, x) - u(n,x) = \sum_{y \in \Gamma} \mathcal{K}(x,y)[ u(n,y) - u(n,x)]  \quad \forall n \geq 1, \ x \in \Gamma.\end{equation}
Given a discrete space-time cylinder $Q = I \times B,$ $u$ solves the heat equation on $Q$ if (\ref{heat_eqn}) holds there (this requires that for each $n\in I$, $u(n,\cdot)$ is defined on $\{z\in \Gamma: \exists x \in B, z\simeq x\}$).\end{definition} 

\begin{definition}[Parabolic Harnack Inequality]
A graph $(\Gamma, \pi, \mu)$ satisfies the (discrete time and space) \emph{parabolic Harnack inequality} if: there exist $\eta \in (0,1), \ 0 < \theta_1 < \theta_2 < \theta_3 < \theta_4$ and $C_P>0$ such that for all $s,r >0, \ x_0 \in \Gamma,$ and every non-negative solution $u$ of the heat equation in the cylinder $Q = (\Z_{+} \cap [s, s+ \theta_4 r^2]) \times B(x_0,r),$ we have
\[ u(n_-, x_-) \leq C_P \, u(n_+, x_+) \quad \forall (n_-, x_-) \in Q_-, \ (n_+, x_+) \in Q_+ \text{ s.t. } d(x_-, x_+) \leq n_+ - n_- ,\]
where $Q_- = (\Z_+ \cap [ s + \theta_1 r^2, s + \theta_2 r^2]) \times B(x_0, \eta r)$ and $Q_+ = (\Z_+ \cap [ s+ \theta_3 r^2, s+ \theta_4 r^2]) \times B(x_0, \eta r).$ 
\end{definition}

The parabolic Harnack inequality obviously implies the elliptic version. The following theorem relates several of the above concepts. 

\begin{theorem}[Theorem 1.7 in \cite{Delmotte_PHI}]\label{VD_PI}
Given $(\Gamma, \pi, \mu)$  (or $(\Gamma, \mathcal{K}, \pi)$) where $\Gamma$ has controlled weights and $\mathcal K$ is uniformly lazy, the following are equivalent:
\begin{enumerate}[(a)]
\item $\Gamma$ is  doubling and satisfies the Poincar\'{e} inequality 
\item $\Gamma$ satisfies the parabolic Harnack inequality
\item $\Gamma$ satisfies two-sided Gaussian heat kernel estimates, that is there exists constants $c_1, c_2, c_3, c_4 >0$ such that for all $x,y \in V, \ n \geq d(x,y),$
\begin{equation}\label{2sided_graphs}
\frac{c_1 }{V(x, \sqrt{n})} \exp\Big(-\frac{d^2(x,y)}{c_2 n}\Big) \leq p(n,x,y) \leq \frac{c_3}{V(x, \sqrt{n})} \exp\Big(-\frac{d^2(x,y)}{c_4 n}\Big).
\end{equation}
\end{enumerate}
\end{theorem}

\begin{definition}[Harnack Graph]
If $(\Gamma, \pi, \mu)$ satisfies any of the conditions in Theorem \ref{VD_PI}, we call $\Gamma$ a \emph{Harnack graph}. 
\end{definition}

\begin{remark}
The uniformly lazy assumption avoids problems related to parity (such as those that appear in bipartite graphs). Without this assumption, it may be that (a) holds but $p(n,x,y) = 0$ for some $n \geq d(x,y),$ and then (b) and the lower bound in (c) do not hold. Here we avoid such difficulties by assuming the graph is uniformly lazy; another solution to this problem is to state (b) and (c) for the sum over two consecutive discrete times $n,\ n+1$, e.g., for (c), $p(n,x,y) + p(n+1,x,y).$ 
\end{remark}

\begin{definition}[The notation $\approx$]\label{approx_def}
For two functions of a variable $x,$ the notation $f \approx g$ means there exist constants $c_1, c_2$ (independent of $x$) such that 
\[ c_1 f(x) \leq g(x) \leq c_2 g(x).\]
\end{definition}

\begin{definition}[Abuse of $\approx$]\label{approx_def2} 
We will often abuse the notation $\approx$ in the case of heat kernel and hitting probability estimates to write formulas more compactly. For instance, we will write (\ref{2sided_graphs}) as 
\[ p(n,x,y) \approx \frac{1}{V(x, \sqrt{n})} \exp\Big(-\frac{d^2(x,y)}{n}\Big).\]
Note this use of $\approx$ means there are different constants in the upper and lower bounds both inside \emph{and} outside the exponential. In the event that we chain such notations together, all constants may change from line to line. 
\end{definition} 

\subsection{Subgraphs of a larger graph}\label{subgraph}

Sometimes we think of $\Gamma$ as a subgraph of a larger graph $\widehat{\Gamma} =(\widehat{V}, \widehat{E}).$ If given $\widehat{\Gamma},$ then for any subset of $V$ of $\widehat{V},$ we can construct a graph $\Gamma$ with vertex set $V$ and edge set $E$ where $\{ x , y \} \in E$ if and only if $x,y \in V$ and $\{x,y\} \in \widehat{E}.$ On occasion, we will abuse notation and use the same symbol to denote both a subset of $\widehat{V}$ and its associated subgraph. 

Further, a subgraph $\Gamma$ inherits a random walk structure from $\widehat{\Gamma}.$ We set $\pi_{\Gamma}(x) = \pi_{\widehat{\Gamma}}(x)$ and $\mu^{\Gamma}_{xy} = \mu^{\widehat{\Gamma}}_{xy}$ for all $x,y \in V, \ \{x,y\} \in E.$ (Hence we may simply use $\pi, \mu$ without indicating the whole graph versus the subgraph, provided that $x,y \in \Gamma.$) 

Then we may define a Markov kernel on $\Gamma$ as in (\ref{MarkovKernel}); we call ths Markov kernel the Neumann Markov kernel of $\Gamma$ (with respect to $\widehat{\Gamma}$) and denote it by $\mathcal{K}_{\Gamma, N}.$ To be precise,

\begin{equation}
\mathcal{K}_{\Gamma,N}(x,y) = \begin{cases} \displaystyle{\frac{\mu^\Gamma_{xy}}{\pi(x)} = \frac{\mu_{xy}^{\widehat{\Gamma}}}{\pi(x)},} & x \not = y, \ x,y \in V \\[2ex]  \displaystyle{1 - \sum_{z \sim x} \frac{\mu^{\Gamma}_{xz}}{\pi(x)} = 1 - \sum_{z \sim x, z \in V} \frac{\mu^{\widehat{\Gamma}}_{xz}}{\pi(x)}}, & x=y \in V.\end{cases}
\end{equation}

We can also define the Dirichlet Markov kernel of $\Gamma$ (with respect to $\widehat{\Gamma}$) by
\begin{equation}
\mathcal{K}_{\Gamma,D}(x,y) = \mathcal{K}_{\widehat{\Gamma}}(x,y) \mathds{1}_{V}(x) \mathds{1}_{V}(y) = \begin{cases} \frac{\mu^\Gamma_{xy}}{\pi(x)}, & x \not = y, \ x, y \in V \\ 1 - \sum_{z \sim x, z \in \widehat{V}} \frac{\mu^{\widehat{\Gamma}}_{xz}}{\pi(x)} , & x=y \in V,\end{cases}
\end{equation}
where $\mathds{1}_V(x) = 1$ if $x \in V$ and zero otherwise. When $V \not = \widehat{V}, \ \mathcal{K}_{\Gamma, D}$ is only a subMarkovian kernel. 

A subgraph $\Gamma$ comes with its own notion of distance $d_\Gamma,$ where $d_\Gamma(x,y)$ is the length of the shortest path between $x$ and $y$ of edges contained in $\Gamma.$ It is always true that $d_{\widehat{\Gamma}}(x,y) \leq d_\Gamma(x,y).$ 

There are two natural notions for the boundary of $\Gamma,$ both of which are useful to us here. 

\begin{definition}[Exterior/Inner Boundary]
The \emph{(exterior) boundary} of $\Gamma$ is 
\[ \partial \Gamma = \{ y \in \widehat{\Gamma} \setminus \Gamma: \exists x \in \Gamma \text{ s.t. } d_{\widehat{\Gamma}}(x,y) = 1\},\]
in other words, the set of points that do not belong to $\Gamma$ with neighbors in $\Gamma.$

The \emph{inner boundary} of $\Gamma$ is the set of points inside $\Gamma$ with neighbors outside of $\Gamma,$ 
\[ \partial_I \Gamma = \{ x \in \Gamma: \exists y \not \in \Gamma \text{ s.t. } d_{\widehat{\Gamma}}(x,y) = 1\}.\]
\end{definition} 

For $x \in \Gamma$ and $y \in \partial \Gamma$ ($\not \in \Gamma$), we extend the definition of $d_\Gamma(x, \cdot)$ by setting 
\[ d_\Gamma(x,y) = 1 + \min_{z \in \Gamma: z \sim y} d_{\Gamma}(z,x).\]
This extension is \emph{not} a distance function as it need not satisfy the triangle inequality. The correct way to think of adding points in $\partial \Gamma$ to $\Gamma$ is to think of them as multiple points as described in Figure \ref{bdry}. If the boundary points are duplicated appropriately, this extension can be made into a distance function. 

\begin{figure}[]
\centering
\begin{tikzpicture}[scale=.45]

       \foreach \i in {0,...,10}
        \draw[gray] (\i, 0)--(\i,10);
       \foreach \j in {0,...,10}
        \draw[gray] (0,\j)--(10,\j);
              \foreach \i in {0,...,10}
      \foreach \j in {0,...,10}
        \filldraw (\i,\j) circle(4pt);
      \foreach \j in {0,...,7}
        \filldraw[red] (5,\j) circle(5pt);
       \foreach \j in {0,...,7}
        \filldraw[blue] (4,\j) circle(5pt);
        \foreach \j in {0,...,7}
        \filldraw[blue] (6,\j) circle(5pt);
        \filldraw[blue] (5,8) circle(5pt);
        \filldraw[SpringGreen3] (1,2) circle(6pt);
         \filldraw[DarkGoldenrod2] (8,2) circle(6pt);
         \node at (0.5,1.5) {$x$};
         \node at (8.5,1.5) {$y$};
\end{tikzpicture}\hphantom{spacespace}
\begin{tikzpicture}[scale=.45]
        \foreach \i in {0,...,4,9,10,11,12,13}
        \draw[gray] (\i, 0)--(\i,10);
        \foreach \j in {0,...,10}
        \draw[gray] (0,\j)--(4,\j);
       \foreach \j in {0,...,10}
        \draw[gray] (9,\j)--(13,\j);
        \draw[gray] (4,10)--(9,10);
        \draw[gray] (4,9)--(9,9);
         \draw[gray] (4,8)--(9,8);
         \draw[gray](5,0)--(5+7/6,7);
          \draw[gray](8,0)--(8-7/6,7);
          \draw[gray](5+7/6,7)--(6.5,7.5);
          \draw[gray](8-7/6,7)--(6.5,7.5);
      \foreach \k in {0,...,7}
        \draw[gray] (4, \k)--(5+ \k/6, \k);
       \foreach \k in {0,...,7}
        \draw[gray] (9, \k)--(8-\k/6, \k);
        \draw[gray] (6.5,8)--(6.5,7.5);
     \foreach \i in {0,1,2,3,4,9,10,11,12,13}
      \foreach \j in {0,...,10}
        \filldraw (\i,\j) circle(4pt);
     \foreach \k in {0,...,7}
       \filldraw[red] (5 + \k/6, \k) circle(4pt);
      \foreach \k in {0,...,7}
       \filldraw[orange] (8 - \k/6, \k) circle(4pt);
       \filldraw[Coral3] (6.5,7.5) circle(4pt); 
       \foreach \j in {0,...,7}
        \filldraw[blue] (4,\j) circle(4pt);
        \foreach \j in {0,...,7}
         \filldraw[blue] (9,\j) circle(5pt);
         \filldraw (6.5, 10) circle(4pt);
         \filldraw (6.5, 9) circle(4pt);
          \filldraw[blue] (6.5, 8) circle(4pt);
        \filldraw[SpringGreen3] (1,2) circle(6pt);
         \filldraw[DarkGoldenrod2] (11,2) circle(6pt);
         \node at (0.5,1.5) {$x$};
         \node at (11.5,1.5) {$y$};
\end{tikzpicture}
 \caption{Let $\widehat{\Gamma}$ be the full ten edges by ten edges square as on the left. Take $\Gamma$ to be $\widehat{\Gamma}$ minus the red points. The red points are $\partial \Gamma$, and the blue points are $\partial_I \Gamma$. Then $d(x, \partial \Gamma) = 4$ and $d(y, \partial \Gamma) = 3,$ and both of these distances are achieved by the same point in $\partial \Gamma,$ call it $z.$ Note $d_\Gamma(x,y) = 19 > d_{\Gamma}(x,z) + d_\Gamma(y,z) = 7.$ The correct way to think of this is duplicating the red points of $\partial \Gamma$ as shown in the right figure.}\label{bdry}
  \end{figure}
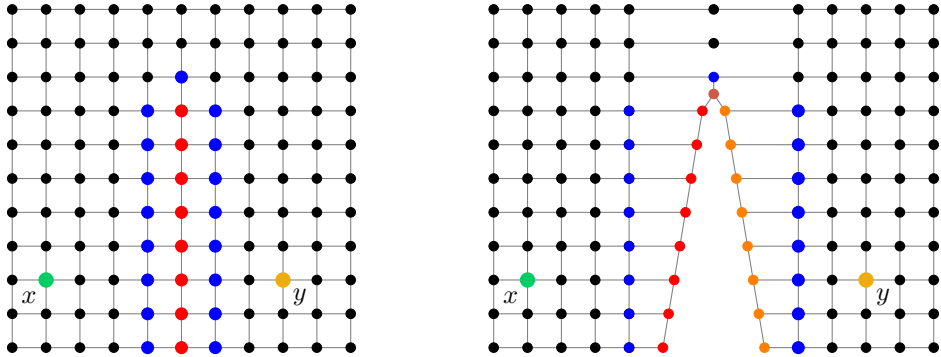

\section{Hitting Probabilities and \texorpdfstring{$S$}{S}-transient Graphs}\label{upper}

\subsection{Hitting Probability Upper Bound}

For this section, consider a graph $\widehat{\Gamma} = (\widehat{V}, \widehat{E})$ with controlled and uniformly lazy weights $(\mu, \pi)$. Let $K$ be a subset of $\widehat{V},$ where we abuse notation to let $K$ indicate both this set of vertices and the subgraph of $\widehat{\Gamma}$ induced by these vertices. Set $\Gamma := \widehat{\Gamma} \setminus K,$ that is, we think of $\Gamma$ as the subgraph of $\widehat{\Gamma}$ with vertex set $\widehat{V} \setminus K.$ We are interested in transience properties of $\Gamma$ and the hitting probability of $K.$ We will assume $\widehat{\Gamma}, \ \Gamma$ to be infinite and connected; $K$ may be either finite or infinite and connected or disconnected. 

We are used to thinking of Markov (or subMarkovian) kernels as recurrent if random walks return to their starting points infinitely often and transient if they do not. However, in the present setting of a subgraph which inherits a random walk structure from a larger graph, there are several natural ways to think of transience/recurrence. 

\begin{definition}[$N$-transience]
A subgraph $\Gamma$ of $(\widehat{\Gamma}, \mathcal{K}, \pi)$ is $N$-transient (``Neumann"-transient) if $(\Gamma, \mathcal{K}_{\Gamma, N}, \pi)$ is transient, that is, with probability one, a random walk on $\Gamma$ only returns to its starting point finitely often.
\end{definition}

Being $N$-transient is an intrinsic property of the subgraph $\Gamma,$ which is in some sense independent of $\widehat{\Gamma}$. A similar definition could be given using $(\Gamma, \mathcal{K}_{\Gamma, D}, \pi)$ instead to define $D$-transience. The killing present in the subMarkovian kernel $\mathcal{K}_{\Gamma,D}$ makes $D$-transience more likely. 

However, in this paper, the main definition of transience we will be concerned with is $S$-transience, or ``survival"-transience, defined below. The explanation for the name is that a subgraph of a larger graph is survival-transient if there is positive probability that a random walk started inside the subgraph never sees vertices that do not belong to the subgraph, hence survives forever.

\begin{definition}[Hitting Time, Hitting Probability]
Consider a graph $(\widehat{\Gamma}, \mathcal{K}, \pi)$ with random walk denoted by $(X_n)_{n \geq 0}.$ Then we denote the first hitting time of a subset of vertices $K$ by  $\ \tau_{K} := \min \{ n \geq 0 : X_n \in K\}$ and the first return time to $K$ by $\tau_K^+ := \min \{ n \geq 1 : X_n \in K\}.$ If $X_0 \not \in K,$ then $\tau_K$ and $\tau_K^+$ are the same. Further, denote the hitting probability of $K$ by $\psi_{K}(x) = \bP^x(\tau_K < + \infty).$
\end{definition}

\begin{definition}[$S$-transience]
Let $(\widehat{\Gamma}, \mathcal{K}, \pi)$ be a connected graph with controlled weights and $K$ be a subset of $\widehat{\Gamma}$ such that $\Gamma := \widehat{\Gamma} \setminus K$ is connected. We say the subgraph $\Gamma$ is \emph{$S$-transient} (``survival"-transient) or that the graph \emph{$\widehat{\Gamma}$ is $S$-transient with respect to the set $K$} if there exists $x \in \widehat{\Gamma}$ such that $\psi_K(x) < 1.$ 

If this is not the case, then we say $\Gamma$ is \emph{$S$-recurrent} (or that $\widehat{\Gamma}$ is $S$-recurrent with respect to $K$).
\end{definition}

\begin{lemma}[Equivalent Definitions of $S$-transience]
Let $(\widehat{\Gamma}, \mathcal{K}, \pi)$ be a connected graph with controlled weights and $K$ be a subset of $\widehat{\Gamma}$ such that $\Gamma := \widehat{\Gamma} \setminus K$ is connected. Then the following are equivalent
\begin{enumerate}[(a)]
\item There exists $x \in \widehat{\Gamma}$ such that $\psi_K(x) < 1.$ 
\item For all $x \in \Gamma, \ \psi_K(x) < 1.$ 
\item For all $y \in \partial \Gamma, \ \bP^y(\tau_K^+ < + \infty) <1 .$ 
\end{enumerate}
\end{lemma}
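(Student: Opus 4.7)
The plan is to use $u := 1 - \psi_K$ as the central object and to prove the cycle (b) $\Rightarrow$ (a) $\Rightarrow$ (b) together with (b) $\Leftrightarrow$ (c). Since $\Gamma$ is nonempty, (b) $\Rightarrow$ (a) is immediate. Applying the strong Markov property at time $1$ to $\tau_K$ gives, for every $x \in \Gamma$,
\[ \psi_K(x) = \sum_{y \simeq x} \mathcal{K}(x,y)\,\psi_K(y), \]
so $u$ is a nonnegative function on $\widehat{\Gamma}$ that vanishes on $K$ (in particular on $\partial \Gamma$) and is harmonic on $\Gamma$.

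For (a) $\Rightarrow$ (b): since $\psi_K \equiv 1$ on $K$, the hypothesis forces some $x_0 \in \Gamma$ with $u(x_0) > 0$. Using the harmonicity of $u$ together with the controlled-weights lower bound $\mathcal{K}(x,y) \geq 1/C_c$ for $y \sim x$, for any $x \in \Gamma$ with a $\Gamma$-neighbor $y$ we have
\[ u(x) \geq \mathcal{K}(x,y)\,u(y) \geq \frac{1}{C_c}\,u(y). \]
Hence $\{x \in \Gamma : u(x) > 0\}$ is nonempty and closed under taking $\Gamma$-neighbors, so by the connectedness of $\Gamma$ it must coincide with $\Gamma$; this is (b).

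For (b) $\Leftrightarrow$ (c): fix $y \in \partial \Gamma \subseteq K$; by the definition of $\partial \Gamma$ there exists $z_0 \in \Gamma$ with $z_0 \sim y$, and controlled weights give $\mathcal{K}(y,z_0) \geq 1/C_c > 0$. One step of first-step analysis yields
\[ \bP^y(\tau_K^+ < \infty) = \sum_{z \simeq y,\, z \in K} \mathcal{K}(y,z) \;+\; \sum_{z \sim y,\, z \in \Gamma} \mathcal{K}(y,z)\,\psi_K(z), \]
while $\sum_{z \simeq y} \mathcal{K}(y,z) = 1$. Under (b) every $\psi_K(z)$ in the second sum is $<1$ and the $z_0$-term contributes positive weight, so the right-hand side is strictly less than $1$, proving (c). Conversely, if $\bP^y(\tau_K^+ < \infty) < 1$ for some $y \in \partial\Gamma$, some $\psi_K(z)$ in the second sum must be $< 1$; this $z \in \Gamma$ witnesses (a), which in turn yields (b) by the previous step.

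The only substantive ingredient is the discrete strong-maximum-principle step in (a) $\Rightarrow$ (b), which relies crucially on both the controlled weights (to turn a neighbor-by-neighbor inequality into a uniform positivity statement) and the connectedness of $\Gamma$ (to link the witness point to every other vertex). All remaining implications reduce to a single application of the Markov property at time one.
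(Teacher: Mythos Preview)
Your proof is correct and follows essentially the same route as the paper's: harmonicity of $1-\psi_K$ on $\Gamma$ together with a strong maximum principle for (a)$\Leftrightarrow$(b), and a one-step Markov decomposition of $\bP^y(\tau_K^+<\infty)$ for the equivalence with (c). The only cosmetic difference is that where the paper invokes the maximum principle by name, you unwind it explicitly via the controlled-weights bound $\mathcal{K}(x,y)\geq 1/C_c$ and connectedness of $\Gamma$; and your decomposition splits the first-step sum into $K$- and $\Gamma$-parts, whereas the paper keeps it as a single sum $\sum_{x\simeq y}\mathcal{K}(y,x)\psi_K(x)$.
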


\begin{proof}
Clearly (b) implies (a). That (a) implies (b) follows from the maximum principle: By the definition of a hitting probability, $\psi_K$ is a harmonic function on $\Gamma := \widehat{\Gamma} \setminus K;$ thus so is $1 - \psi_K,$ which is non-negative. By the maximum principle, if there exists some $x \in \Gamma$ such that $1- \psi_K(x) =0,$ then $1-\psi_K \equiv 0$ on $\Gamma.$ Hence if $\psi_K(x) <1$ for a single $x \in \Gamma,$ this must be true of all $x \in \Gamma.$

We now show the equivalence of (a)-(b) and (c).  If $y \in \partial \Gamma,$ then using the Markov property, 
\begin{align*}
\bP^y(\tau_K^+ < + \infty) &= \sum_{x \simeq y} \bP^y (\tau_K^+ < + \infty, X_1 = x) = \sum_{x \simeq y} \bE^y ( \mathds{1}_{\{X_1 = x\}} \bE^{X_1}(\mathds{1}_{\{\tau_K < + \infty\}})) \\&= \sum_{x \simeq y} \mathcal{K}(y,x) \bP^x(\tau_K  < + \infty) = \sum_{x \simeq y} \psi_K(x) \mathcal{K}(y,x). 
\end{align*}

Since $y \in \partial \Gamma,$ there exists $z \in \Gamma$ such that $z \sim y.$ If (b) holds, then $\psi_K(z) < 1$ so 
\[ \bP^y (\tau_K^+ < + \infty) < \sum_{x \simeq y} \mathcal{K}(y,x) = 1\]
and (c) holds. Conversely, if (c) holds, then 
\[ \sum_{x \simeq y} \psi_K(x) \mathcal{K}(x,y) < 1 = \sum_{x \simeq y} \mathcal{K}(x,y).\]
Thus there exists some $z \sim y$ such that $\psi_K(z) < 1,$ so (a) holds. 
\end{proof}

Note the lemma does not contain some of the other usual equivalent definitions of transience as allowing for $K$ to be infinite can cause difficulties. For example, whether $K$ is hit infinitely often or not can depend upon the precise choice of $K$ as well as upon if the walk starts inside or outside of $K.$ A graph is transient in the classical sense if and only if it is $S$-transient with respect to any finite set.

\begin{example}[Lattices $\Z^m$]
The lattice $\Z^m$ with simple weights is (classically) transient or, equivalently, $S$-transient with respect to any finite set, if and only if $m\geq 3.$ 
\end{example} 

\begin{example}[Lattices in lattices, $\Z^m \setminus \Z^k$]
Consider a copy of a $k$-dimensional lattice $\Z^k$ inside of $\Z^m,$ where we assume $k\leq m$ and $m\geq 3.$

 If $k \leq m-3,$ then $\Z^m \setminus \Z^k$ is connected, $N$-transient, and $S$-transient.
 
 If $k = m-2,$ then $\Z^m \setminus \Z^k$ is connected and $N$-transient, but it is not $S$-transient, since the set $\Z^k$ will be visited infinitely often with probability one (and hence certainly $\psi_{\Z^k} \equiv 1$).

 If $k = m-1,$ then $\Z^m \setminus \Z^k$ is disconnects into two half-spaces (see Example \ref{half_space} below). 
\end{example}

\begin{example}[Half-space $\Z^m_+$]\label{half_space}
Consider the upper half-space $\Gamma = \Z^m_+ = \{ (x_1, \dots, x_{m-1}, \allowbreak x_m) \in \Z^m : x_m > 0\}$
inside of $\Z^m$ with simple weights. Then $\Z^m_+$ is $N$-transient if and only if $m \geq 3,$ and it is always transient if we kill the walk along the set $\{x_m=0\}.$ However, $\Z^m_+$ is \emph{never} $S$-transient, since as the walk on $\Z^m_+$ escapes to infinity, it hits the set $\{x_m = 0\}$ with probability one. 
\end{example}

\begin{definition}[Uniform $S$-transience]
Let $(\widehat{\Gamma}, \pi, \mu)$ be a graph and $\Gamma := \widehat{\Gamma} \setminus K$ a subgraph. We say $\Gamma$ is \emph{uniformly $S$-transient}, or that $\widehat{\Gamma}$ is \emph{uniformly $S$-transient with respect to} $K$, if there exist $L, \varepsilon >0$ such that $d(x,K) \geq L$ implies that $\psi_K(x) \leq 1- \varepsilon.$ 
\end{definition}

The following theorem gives an upper bound on the hitting probability of $K.$ This bound can be useful for showing $S$-transience. 

\begin{theorem}[Hitting Probability Upper Bound]\label{psi-upperbd}
Let $(\widehat{\Gamma}, \pi, \mu)$ have controlled weights that are uniformly lazy.  Let $K$ be a subset/subgraph of $\widehat{\Gamma}.$ Set $\Gamma := \widehat{\Gamma} \setminus K$ and note $\partial \Gamma \subseteq K.$ Assume that $\widehat{\Gamma}$ is Harnack. Define
\[ B_{\partial \Gamma}(x,r) = B_{\widehat{\Gamma}}(x,r) \cap \partial \Gamma \quad \text {and} \quad V_{\partial \Gamma}(x,r) = \pi(B_{\partial \Gamma}(x,r)) \quad \forall x \in \widehat{\Gamma},\]
that is, $V_{\partial \Gamma}$ is the volume of traces of $\widehat{\Gamma}$-balls in $\partial \Gamma.$

For any $x \in \Gamma = \widehat{\Gamma} \setminus K,$ set
\[ W(x,r) := \frac{V_{\widehat{\Gamma}}(x,r)}{V_{\partial \Gamma}(x,r)}.\]
Then, if $d_x := d(x, K),$ there exists a constant $C$ (depending on the constants appearing in the controlled weights, uniformly lazy, and Harnack assumptions) such that 
\begin{equation}\label{psi_bd}
\psi_K(x) \leq \sum_{n \geq d_x^2} \frac{C}{W(x, \sqrt{n})} \quad \forall x \in \Gamma \setminus \partial_I \Gamma.
\end{equation}
\end{theorem}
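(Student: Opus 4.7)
The plan is to bound $\psi_K(x)$ by a sum of Green's functions over $\partial\Gamma$ and evaluate that via the Gaussian heat kernel estimates of Theorem \ref{VD_PI}(c). Since the walk proceeds one edge at a time and starts in $\Gamma$, it can enter $K$ only through $\partial\Gamma$, so $\psi_K(x) = \bP^x(\tau_{\partial\Gamma}<\infty)$. Writing $g(x,y) = \sum_{n\geq 0} p_n(x,y)$ and using the well-known identity $\bP^x(\tau_y<\infty) = g(x,y)/g(y,y)$ combined with the trivial bound $g(y,y)\pi(y) = \bE^y[T_y]\geq 1$ (the time $n=0$ alone contributes $1$), the union bound gives
\[
\psi_K(x) \;\leq\; \sum_{y\in\partial\Gamma}\bP^x(\tau_y<\infty) \;\leq\; \sum_{y\in\partial\Gamma} g(x,y)\,\pi(y).
\]

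The key estimate will be the Green's function bound
\[
g(x,y) \;\leq\; C\sum_{n\geq d(x,y)^2} \frac{1}{V(x,\sqrt{n})},
\]
which I will extract from the Gaussian upper bound. Since $p_n(x,y) = 0$ for $n<d(x,y)$, only $n\geq d(x,y)$ contributes to $g(x,y)$. For $n\geq d(x,y)^2$ the Gaussian factor is at most $1$, yielding the contribution $\sum_{n\geq d(x,y)^2} C/V(x,\sqrt{n})$. For the ``ballistic'' range $d(x,y)\leq n<d(x,y)^2$, doubling gives $V(x,\sqrt{n})^{-1}\leq C(d(x,y)/\sqrt{n})^{Q}V(x,d(x,y))^{-1}$ with $Q = \log_2 D$; together with the Gaussian factor $e^{-d(x,y)^2/(cn)}$, the change of variable $u = d(x,y)^2/n$ shows that this range contributes at most $Cd(x,y)^2/V(x,d(x,y))$. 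A final application of doubling gives $d(x,y)^2/V(x,d(x,y))\lesssim\sum_{n\geq d(x,y)^2}1/V(x,\sqrt{n})$, because the terms with $n\in[d(x,y)^2,2d(x,y)^2]$ each have size $\asymp 1/V(x,d(x,y))$.

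Substituting this into the previous display, interchanging the order of summation, and using $V_{\partial\Gamma}(x,\sqrt{n}) = \sum_{y\in\partial\Gamma,\,d(x,y)\leq\sqrt{n}}\pi(y)$ gives
\[
\psi_K(x)\;\leq\; C\sum_{y\in\partial\Gamma}\pi(y)\sum_{n\geq d(x,y)^2}\frac{1}{V(x,\sqrt{n})} \;=\; C\sum_{n\geq 0}\frac{V_{\partial\Gamma}(x,\sqrt{n})}{V_{\widehat\Gamma}(x,\sqrt{n})} \;=\; C\sum_{n\geq 0}\frac{1}{W(x,\sqrt{n})}.
\]
Every $y\in\partial\Gamma$ satisfies $d(x,y)\geq d_x$, so $V_{\partial\Gamma}(x,\sqrt{n}) = 0$ for $n<d_x^2$ and the sum truncates to $n\geq d_x^2$, matching the claimed bound. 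The main technical obstacle is the Green's function estimate above, specifically absorbing the ballistic contribution into the diffusive tail using the doubling property; the remaining steps (union bound, order-of-summation swap, truncation to $n\geq d_x^2$) are bookkeeping.
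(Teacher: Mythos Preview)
Your proof is correct and takes essentially the same approach as the paper. The only cosmetic difference is in the first step: the paper writes $\psi_K(x)$ exactly as $\sum_{n,v,y}\mathcal{K}_{\Gamma,D}^{n-1}(x,y)\,\mathcal{K}_{\widehat{\Gamma}}(y,v)$ and then bounds $\mathcal{K}_{\Gamma,D}\leq\mathcal{K}_{\widehat{\Gamma}}$, whereas you reach the same sum via the union bound and $\bP^x(\tau_y<\infty)\leq g(x,y)\pi(y)$; the subsequent time-sum analysis (split at $n=d^2$, absorb the ballistic range into the diffusive tail via doubling, swap the order of summation, truncate at $n\geq d_x^2$) is identical.
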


The theorem does not discuss $x \in \partial_I \Gamma,$ since in this case $\psi_K(x) \approx 1,$ independently of $x,$ due to the controlled weights hypothesis. 

Using the theorem, it is easy to verify that $\Z^m \setminus \Z^k$ is uniformly $S$-transient when $k \leq m-3$ (see Example \ref{lattice_lattice} below).

\begin{proof}
For any $x \in \Gamma \setminus \partial_I \Gamma, \ d(x, K) \geq 2,$ we have
\begin{equation}\label{psi_equals} 
\begin{aligned}
\psi_{K}(x) &:= \bP^x( \tau_{K} < + \infty)
= \sum_{n=1}^\infty \bP^x(\tau_K = n) = \sum_{n \geq 1} \sum_{v \in K} \bP^x (\tau_K = n, X_{\tau_K} = v) \\
&= \sum_{n \geq 2} \sum_{v \in \partial \Gamma} \sum_{\substack{y \sim v \\ y \in \Gamma}} \mathcal{K}_{\Gamma,D}^{n-1}(x,y) \mathcal{K}_{\widehat{\Gamma}}(y,v).
\end{aligned}
\end{equation}

Since $\mathcal{K}_{\widehat{\Gamma}}(y,v)$ is a probability, it is at most one. The Dirichlet Markov kernel on $\Gamma$ is less than the Markov kernel on all of $\widehat{\Gamma},$ which is Harnack. Hence
\begin{align*}
\psi_K(x) &= \sum_{n \geq 2} \sum_{v \in \partial \Gamma} \sum_{\substack{y \sim v \\ y \in \Gamma}} \mathcal{K}_{\Gamma,D}^{n-1}(x,y) \mathcal{K}_{\widehat{\Gamma}}(y,v)
\leq \sum_{n \geq 2} \sum_{v \in \partial \Gamma} \sum_{\substack{y \sim v \\ y \in \Gamma}} \mathcal{K}_{\widehat{\Gamma}}^{n-1}(x,y) \\
&\leq C \sum_{n \geq 2} \sum_{v \in \partial \Gamma} \sum_{\substack{y \sim v \\ y \in \Gamma}} \frac{\pi(y)}{V_{\widehat{\Gamma}}(x, \sqrt{n})} \exp \Big(- \frac{d_{\widehat{\Gamma}}^2(x,y)}{cn}\Big).
\end{align*}

Now $\pi(y) \approx \pi(v)$ and the number of $y \sim v$ is uniformly bounded above. Moreover, as any of the above $y'$s belong to  $\partial_I \Gamma$ while $x$ does not, it is impossible that $x = y.$ Hence $d_{\widehat{\Gamma}}(x,y) \geq 1.$ Thus we can replace $d_{\widehat{\Gamma}}(x,y)$ via $d_{\widehat{\Gamma}}(x,v)$ since 
\[ d_{\widehat{\Gamma}}(x,v) \leq d_{\widehat{\Gamma}}(x,y) + 1 \leq 2 d_{\widehat{\Gamma}}(x,y).\]

We have 
\begin{align*}
\psi_K(x) &\leq C \sum_{n \geq 2} \sum_{v \in \partial \Gamma} \frac{\pi(v)}{V_{\widehat{\Gamma}}(x, \sqrt{n})} \exp \Big(- \frac{d_{\widehat{\Gamma}}^2(x,v)}{cn}\Big).
\end{align*} 

Let $d := d_{\widehat{\Gamma}}(x,v).$ We sum first in time. First we split the sum into two, noting that the exponential is large for large $n$; recall the notation $\approx$ is as in Definition \ref{approx_def2} and means we have matching upper/lower bounds with different constants inside and outside the exponential:
\begin{align*}
\sum_{n \geq 2} \frac{1}{V_{\widehat{\Gamma}}(x, \sqrt{n})} \exp\Big(-\frac{d_{\widehat{\Gamma}}^2(x,v)}{n}\Big)
&\approx \sum_{n \leq d^2} \frac{1}{V_{\widehat{\Gamma}}(x, \sqrt{n})} \exp\Big(-\frac{d_{\widehat{\Gamma}}^2(x,v)}{n}\Big) + \sum_{n \geq d^2} \frac{1}{V_{\widehat{\Gamma}}(x, \sqrt{n})}.
\end{align*}

We compute the first piece of the sum by arranging a dyadic decomposition with $d/\sqrt{n} \asymp 2^l,$ where the notation $\asymp$ means $2^{l} \leq d/\sqrt{n} \leq 2^{l+1}.$ The quantity $d/\sqrt{n}$ ranges from $1$ to $d$ here. Let $l_{x,v}$ be the integer such that $d(x,v) \asymp 2^{l_{x,v}}.$ Then, with constants $C,c$ changing from one inequality to the next,
\begin{align*}
\sum_{n \leq d^2} \frac{C}{V_{\widehat{\Gamma}}(x, \sqrt{n})} \exp\Big( - \frac{d^2_{\widehat{\Gamma}}(x, v)}{cn}\Big)
&\leq \sum_{l =0}^{l_{x,v}} \sum_{\sqrt{n} \asymp d 2^{-l}} \frac{C}{V_{\widehat{\Gamma}}(x, \sqrt{n})} \exp\Big( - \frac{d^2}{cn}\Big)\\
&\leq \sum_{l=0}^{l_{x,v}} \frac{C}{V_{\widehat{\Gamma}}(x, d/2^{l+1})} \frac{d^2}{4^l} \exp\Big(-\frac{4^l}{c}\Big) \\
&\leq C \frac{d^2}{V_{\widehat{\Gamma}}(x,d)} \sum_{l=0}^{l_{x,v}} \exp\Big(-\frac{4^l}{c}\Big) \leq  C \frac{d^2}{V_{\widehat{\Gamma}}(x,d)}.
\end{align*}
In the last line we used the doubling of $\widehat{\Gamma}$ (a consequence of the assumption that $\widehat{\Gamma}$ is Harnack). 

It is easy to see the sum we just computed is dominated by the tail sum ($n \geq d_{\widehat{\Gamma}}^2(x,v)$) as 
\[ \sum_{n=d^2}^{4d^2} \frac{1}{V_{\widehat{\Gamma}}(x, \sqrt{n})} \approx \frac{d^2}{V_{\widehat{\Gamma}}(x,d)} \]
due to the doubling of $\widehat{\Gamma}.$ Recall here the exponential is large. 

Let $d_x := d(x,K)$ and note $d(x,K) \leq d(x,v)$ for all $v \in \partial \Gamma$. Switching the order of summation in the upper bound above, we find
\begin{align*}
\psi_K(x) &\leq C \sum_{v \in \partial \Gamma} \sum_{n \geq d^2(x,v)} \frac{\pi(v)}{V_{\widehat{\Gamma}}(x,\sqrt{n})}
= \sum_{n \geq d_x^2} \sum_{\substack{ v \in \partial \Gamma: \\ d^2(x,v) \leq n}} \frac{\pi(v)}{V_{\widehat{\Gamma}}(x,\sqrt{n})} \\
&= C\sum_{n \geq d_x^2} \frac{V_{\partial \Gamma}(x, \sqrt{n})}{V_{\widehat{\Gamma}}(x, \sqrt{n})} 
= \sum_{n \geq d_x^2} \frac{C}{W(x, \sqrt{n})}. 
\end{align*}

\end{proof}

In the case the volume of traces of $\widehat{\Gamma}$-balls in $\partial \Gamma$ are doubling, the theorem simplifies. It is important that in the corollary we only consider traces whose centers belong to $\partial \Gamma.$ 

\begin{definition}[Doubling Boundary]\label{bdry_double}
Consider a graph $(\widehat{\Gamma}, \pi, \mu)$ and subgraph $\Gamma$ of $\widehat{\Gamma}$ with exterior boundary $\partial \Gamma.$ We say $V_{\partial \Gamma}$ is doubling if there exists a constant $D>0$ such that for all $z \in \partial \Gamma, \ r>0, \ V_{\partial \Gamma}(z,2r) \leq D V_{\partial \Gamma}(z,r).$ 
\end{definition}

\begin{corollary}\label{simpler} 

Under the assumptions of Theorem \ref{psi-upperbd} and the additional assumption that $V_{\partial \Gamma}$ is doubling (as in Definition \ref{bdry_double}), then the upper bound of Theorem \ref{psi-upperbd} has the following simplified form. Let $v_x \in \partial \Gamma$ achieve $d(x,K)$ and set $\widetilde{W}(x,r) := V_{\widehat{\Gamma}}(x,r)/V_{\partial \Gamma}(v_x,r).$ 

Then there exists a constant $C$ (depending on $D$ from Definition \ref{bdry_double} and the constants from the assumptions as in Theorem \ref{psi-upperbd}) such that
\[ \psi_K(x) \leq \sum_{n \geq d_x^2} \frac{C}{\widetilde{W}(x, \sqrt{n})}.\]
\end{corollary}

\begin{proof}
Return to the point in the proof of Theorem \ref{psi-upperbd} where
\begin{align*}
\psi_K(x) &\leq C \sum_{v \in \partial \Gamma} \sum_{n \geq d_{\widehat{\Gamma}}^2(x,v)} \frac{\pi(v)}{V_{\widehat{\Gamma}}(x,\sqrt{n})}.
\end{align*}

Then $d_{\widehat{\Gamma}}(x, v_x) \leq d_{\widehat{\Gamma}}(x, v)$ by definition of $v_x$ and $d_{\widehat{\Gamma}}(v_x,  v) \leq d_{\widehat{\Gamma}}(v_x, x) + d_{\widehat{\Gamma}}(x, v) \leq 2d_{\widehat{\Gamma}}(x, v).$ Therefore $d_{\widehat{\Gamma}}(v,x) \geq \frac{1}{3}(d_{\widehat{\Gamma}}(x, v_x) + d_{\widehat{\Gamma}}(v_x,v)),$ and we can replace $d_{\widehat{\Gamma}}(v,x)$ with this sum, that is,
\begin{align*}
\psi_K(x) &\leq C \sum_{v \in \partial \Gamma} \sum_{n \geq \frac{1}{9} (d_{\widehat{\Gamma}}^2(x,v_x)+d_{\widehat{\Gamma}}^2(v_x,v))} \frac{\pi(v)}{V_{\widehat{\Gamma}}(x,\sqrt{n})}.
\end{align*}

Again interchanging the order of summation, noting that the time sum for a particular $v$ requires $n \geq \frac{1}{9} d_{\widehat{\Gamma}}^2(v_x,v),$ we have 
\begin{align*}
\psi_K(x) &\leq C \sum_{n \geq \frac{1}{9}d_{\widehat{\Gamma}}^2(x, v_x)} \frac{1}{V_{\widehat{\Gamma}}(x, \sqrt{n})} \sum_{\substack{v \in \partial \Gamma: \\ \frac{1}{9} d_{\widehat{\Gamma}}^2(v_x, v) \leq n}} \pi(v) \\
&\leq C \sum_{n \geq d_{\widehat{\Gamma}}^2(x, v_x)} \frac{V_{\partial \Gamma}(v_x, 3 \sqrt{n})}{V_{\widehat{\Gamma}}(x, \sqrt{n})}
\leq C \sum_{n \geq d_{\widehat{\Gamma}}^2(x, v_x)} \frac{V_{\partial \Gamma}(v_x, \sqrt{n})}{V_{\widehat{\Gamma}}(x, \sqrt{n})},
\end{align*}
where we used the doubling of both $V_{\partial \Gamma}$ and $V_{\widehat{\Gamma}}$ in the last line. 
\end{proof}

\begin{remark}
If $\{ \sum_{n \geq 1} \big(W(x, \sqrt{n})\big)^{-1} \}_{x \in \Gamma}$ (or the sum with $\widetilde{W}$) converges uniformly, that is, for all $\varepsilon>0,$ there exists $N$ (independent of $x$) such that 
\begin{equation}\label{W_unif}
\sum_{n \geq N} \frac{1}{W(x,\sqrt{n})} < \varepsilon,
\end{equation}
then it follows from Theorem \ref{psi-upperbd} that $\widehat{\Gamma}$ is uniformly $S$-transient with respect to $K.$ In fact, in this case, $\psi_K(x) \to 0$ uniformly as $d_{\widehat{\Gamma}}(x,K) \to \infty.$
\end{remark}

In the $S$-recurrent case, $\sum_{n\geq 1} \big(W(x, \sqrt{n})\big)^{-1}$ need not converge. We will also see examples where this sum converges, but not uniformly in $x$. In certain regimes of the latter type of example, we may be able to use (\ref{psi_bd}) to see that $\psi_K < 1$ for some $x$ (and hence show $S$-transience). In other regimes and in the recurrent case, this bound is not useful. (Recall $\psi(x) \leq 1 \ \forall x$ since $\psi$ is a probability.) Observe that the above theorem is not strong enough to conclude that if $\sum_{n \geq 1} \big(W(x, \sqrt{n})\big)^{-1}$ converges for some (any) $x \in \widehat{\Gamma} \setminus K$ then $\widehat{\Gamma}$ is $S$-transient with respect to $K.$ This is because in the bound given by the theorem, the start of tail of the sum depends on the point $x,$ so although $\sum_{n\geq 1} \big(W(x, \sqrt{n})\big)^{-1}$ may converge, that does not guarantee that $\sum_{n \geq d_x^2} \big(W(x, \sqrt{n})\big)^{-1}$ is sufficiently small to make $\psi_K(x) < 1.$

\subsection{Examples}

In this section we give examples of applying Theorem \ref{psi-upperbd} or Corollary \ref{simpler} to demonstrate $S$-transience or uniform $S$-transience. Below we frequently use $\approx$ from Definition \ref{approx_def}.

\begin{example}[Lattices in lattices]\label{lattice_lattice} We verify that $\Z^m$ is uniformly $S$-transient with respect to $\Z^k$ whenever $m-k \geq 3.$ 

Consider $\widehat{\Gamma} = \Z^m$ with $K = \partial \Gamma = \Z^k = \{ (x_1, \dots, x_k, 0, \dots, 0) \in \Z^m : x_1, \dots, x_k \in \Z\}$ the $k$-dimensional sublattice made up of the first $k$-coordinates. Assume $m-k \geq 3.$ Suppose $\Z^m$ has simple weights or a variation thereof (such as taking bounded weights or taking the lazy SRW on $\Z^m$). With these weights, $\Z^m$ is Harnack, and $B_{\partial \Gamma}(z,r) = B_{\Z^m}(z,r) \cap \Z^k = B_{\Z^k}(z,r)$ for all $z \in \Z^k,$ so it is clear $V_{\Z^k}$ is doubling. Thus all the hypotheses of Theorem \ref{psi-upperbd} and Corollary \ref{simpler} are satisfied.
 We compute
\[ \widetilde{W}(x,r) := \frac{V_{\Z^m}(x, r)}{V_{\Z^k}(v_x,r)} \approx \frac{r^m}{r^k} = r^{m-k}.\]
Hence,
\[ \psi_K(x) \leq \sum_{n \geq d_x^2} \frac{C}{\widetilde{W}(x, \sqrt{n})} \approx \sum_{n\geq d^2} \frac{1}{n^{(m-k)/2}} \approx \frac{1}{d^{m-k-2}} \to 0 \quad \text { as } d \to \infty\]
since $(m-k)/2 > 1$ as $m-k \geq 3.$ Here there is only dependence on $d,$ the distance to $K=\Z^k,$ and not on $x$ itself. Hence $\Z^m$ is uniformly $S$-transient with respect to $\Z^k.$ In fact, in this case $\{ \sum_{n \geq 1} (W(x, \sqrt{n}))^{-1} \}_{x \in \widehat{\Gamma}\setminus K}$ converges uniformly. 

If instead $m-k < 3,$ then the series fails to converge, and Theorem \ref{psi-upperbd} gives the pointless bound of $\psi \leq \infty.$ 

\end{example}

\begin{remark}
In our examples, it is common that $K = \partial \Gamma.$ Theorem \ref{psi-upperbd} is useful for showing that $\widehat{\Gamma}$ is transient with respect to a subset $K$ of its vertices, and we have the idea that transient sets tend to be ``thin" or of smaller dimension, as we saw above, so the set $K$ doesn't have a much of an ``interior" in the larger graph. 

However, for future applications involving gluing graphs, thinking of the set $K$ as having some ``thickness" may be useful. Consider $\widehat{\Gamma} = \Z^m$ and take $K$ to be a cylinder, say $K = \{ \vec{x} \in \Z^m: |x_m| \leq r\},$ so that $K$ is the set of all points within distance $r$ from the $x_m$-axis. Then if $r \geq 1$ and $\Gamma = \Z^m \setminus K, \ \partial \Gamma \not = K.$ However, the chance we hit $K$ is essentially the same as the chance we hit a single line in $\Z^m,$ so $\Gamma$ is $S$-transient if and only if $m \geq 4.$ 
\end{remark}

\begin{example}[Sparse line in $\Z^3$]
In Example \ref{lattice_lattice}, we could not use Theorem \ref{psi-upperbd} to decide the $S$-transience/$S$-recurrence of a copy of $\Z$ in $\Z^3.$ The same can be said for a half-line in $\Z^3.$ Now consider the sparse half-line in $\Z^3$ given by dyadic points: $K = \partial \Gamma = \{(2^k, 0, 0) \in \Z^3: k \in \Z_{\geq 0} \}.$ Take the lazy SRW on $\Z^3.$ 

The hypotheses of Theorem \ref{psi-upperbd}/Corollary \ref{simpler} are satisfied; to verify doubling of $\partial \Gamma,$ note that $V_{\partial \Gamma}(z, r) \approx \log_2(r)$ for $z \in \partial \Gamma.$ Further note for any $v_x \in \partial \Gamma,$ we have $V_{\partial \Gamma}(v_x, r) \leq V_{\partial \Gamma}(0, r).$ We now compute, for any $x \in \Z^3,$ where $C$ can change from step to step,
\begin{align*}
\sum_{n \geq 1} \frac{1}{\widetilde{W}(x,\sqrt{n})} &= \sum_{n \geq 1} \frac{V_{\partial \Gamma}(v_x, \sqrt{n})}{V_{\Z^3}(x, \sqrt{n})} 
\leq \sum_{n \geq 1} \frac{V_{\partial \Gamma}(0, \sqrt{n})}{V_{\Z^3}(x, \sqrt{n})} \leq C \sum_{l \geq 0} \sum_{\sqrt{n} \asymp 2^l} \frac{V_{\partial \Gamma}(0,\sqrt{n})}{n^{3/2}} \\
&\leq C \sum_{l \geq 0} \sum_{\sqrt{n} \asymp 2^l} \frac{l+1}{2^{3l}} \leq C\sum_{l \geq 0} \frac{l+1}{2^{3l}} 2^{2l} =C  \sum_{l \geq 0} \frac{l+1}{2^l}.
\end{align*}
This is a convergent sum that is independent of $x.$ Therefore $\Z^3$ is uniformly $S$-transient with respect to the sparse line $K.$  
\end{example}

\begin{example}[Weighted half-spaces]\label{halfspace_trans} Consider $\widehat{\Gamma} = \Z^m_+ = \{ (x_1, \dots, x_m) \in \Z^m: x_m \geq 0 \}$ where $\pi(x_1, \dots, x_m) = (1+ x_m)^\alpha$ and $\alpha > 1.$ Let $K = \partial \Gamma = \{(x_1, \dots, x_m) \in \Z^m_+ : x_m = 0 \}.$ Let $\mathcal{K}_{\Z^m_+}$ denote the Markov kernel on $\Z^m_+$ where at each vertex away from the edge, the walk stays in place with probability $1/2$ or moves to a neighbor uniformly at random, and, at vertices on the boundary, the walk moves to a neighbor with probability $\frac{1}{2m}$ and otherwise stays in place (the probability of staying in place is $>1/2.$). Define a new Markov kernel on $\Z^m_+$ by 
\[ \mathcal{M}_{\Z^m_+}(x,y) = \begin{cases} \mathcal{K}_{\Z^m_+}(x,y) \min\{1, \frac{\pi(y)}{\pi(x)}\}, & x \not = y \\[1ex] 1 - \sum_{z \sim x} \mathcal{M}_{\Z^m_+}(x,z), & x=y.\end{cases} \]
Then this is a Markov kernel, and we consider the graph $(\Z^m_+, \pi, \mathcal{M}_{\Z^m_+}).$ Since $\mathcal{K}_{\Z^m}$ is symmetric with respect to the vertex measure that is identically $1,$ it is easy to verify $\mathcal{M}_{\Z^m_+}$ is symmetric with respect to $\pi.$ \\

The appropriate edge weights that give the same Markov kernel are
\[ \mu_{xy} = \pi(x) \mathcal{M}_{\Z^m_+}(x,y).\]
As $\mathcal{K}_{\Z^m_+}$ was uniformly lazy and had controlled weights, $\mathcal{M}_{\Z^m_+}$ inherits these properties. 

That $\Z^m_+$ is Harnack with respect to this random walk structure can be verified by directly showing it is doubling and satisfies the Poincar\'{e} inequality or by using arguments similar to those in Section 4.3 of \cite{ag_lsc_harnackstability}. Note the measure $\pi$ here is not bounded above. On $K,$ we have $\pi \equiv 1$ and $V_{\partial \Gamma}((x_1, \dots, x_{m-1}, 0),r) \approx r^{m-1}$ is doubling. Further, 
\begin{align*}
V_{\widehat{\Gamma}}((x_1, \dots, x_m),r) \approx \begin{cases} |x_m|^\alpha \, r^m, & r \leq |x_m| \\ r^{m+\alpha}, & r \geq |x_m|.\end{cases}
\end{align*}
The above computation can be seen as follows: If $r \leq |x_m|,$ there are approximately $r^m$ points in $B((x_1, \dots, x_m), r)$, each of which has weight approximately $|x_m|^\alpha$. It is clear we can get such an upper bound; for the lower bound, note the ball of radius $r$ contains the ball of radius $r/2$ which again has approximately $r^m$ points and since $r \leq |x_m|,$ all such points have weight approximately $|x_m|^\alpha.$ On the other hand, if $r \geq |x_m|,$ we can consider a ball of radius $r$ with center on $\partial \Gamma.$ As $\pi$ is constant except in the $x_m$ direction, the volume of such a ball is approximately $r^{m-1}(1^\alpha + \cdots + r^\alpha)$. As $\big(\frac{r}{2}\big)^\alpha \frac{r}{2} \leq 1^\alpha + \cdots + r^\alpha \leq r^\alpha \cdot r,$ the desired volume estimate follows. 

Therefore
\begin{align*}
\widetilde{W}((x_1, \dots, x_m), r) \approx \begin{cases} |x_m|^\alpha r, & r \leq |x_m| \\ r^{1+\alpha}, & r \geq |x_m|.\end{cases}
\end{align*}

Note that in this example, the family of sums $\{ \sum_{n\geq 1} (\widetilde{W}(x,\sqrt{n}))^{-1} \}_{x \in \Z_+^d \setminus K}$ does not converge uniformly as
\begin{align*}
\sum_{n \geq 1} \frac{1}{\widetilde{W}(x, \sqrt{n})} &\approx \sum_{n=1}^{|x_m|^2}\frac{1}{|x_m|^\alpha \, n^{1/2}} + \sum_{n> |x_m|^2} \frac{1}{n^{(1+\alpha)/2}} \\
&\leq \frac{1}{|x_m|^\alpha} \big[2|x_m| -2] + \Big(\frac{2}{\alpha-1}\Big) \frac{1}{|x_m|^{\alpha -1}} \quad \text{ (since } \alpha >1),
\end{align*}
which depends on $x.$ 

However, this example is still uniformly $S$-transient because $d_x = d(x,K) = |x_m|$ and the form of $\widetilde{W}$ changes based on comparing $r$ to $|x_m|$ in a convenient way so that 
\begin{align*}
\sum_{n \geq d_x^2} \frac{1}{\widetilde{W}(x, \sqrt{n})} \approx \sum_{n \geq |x_m|^2} \frac{1}{n^{(1+\alpha)/2}} \approx \frac{1}{d_x^{\alpha -1}} \to 0 \quad \text{ as } d_x \to \infty.
\end{align*}

Therefore for any $\varepsilon >0,$ we can choose $L > 0$ such that whenever $d(x,K) \geq L,$ we have 
\[ \psi_K(x) \leq \sum_{n > d_x^2} \frac{1}{\widetilde{W}(x, \sqrt{n})} = \frac{1}{d_x^{\alpha-1}} \leq \frac{1}{L^{\alpha-1}} < \varepsilon,\]
that is, the weighted half-space $\Z^m_+$ (with $\alpha >1$) is uniformly $S$-transient with respect to $K$ for all $m.$ 
\end{example}

\section{Harmonic Profiles and Hitting Probability Estimates}\label{h-trans}

The previous section obtained an upper bound for the hitting probability $\psi_K.$ In this section, we obtain two-sided bounds on $\psi_K.$ Getting a lower bound requires a better estimate on $\mathcal{K}_{\Gamma, D},$ which we will give in terms of a nice harmonic function (a harmonic profile) on $\Gamma.$ To guarantee such harmonic functions exist, we will make geometric assumptions about $\Gamma.$ 

Recall all graphs $(\Gamma, \pi, \mu)$ are assumed to be uniformly lazy and have controlled weights. 

\begin{definition}[Uniform]
A subgraph $\Gamma$ of a graph $\widehat{\Gamma}$ is \emph{uniform} in $\widehat{\Gamma}$  if there exist constants $0<c_u, \, C_U < +\infty$ such that for any $x,y \in \Gamma$ there is a path $\gamma_{xy} = (x_0 = x, x_1, \dots, x_k =y)$ between $x$ and $y$ in $\Gamma$ such that 
\begin{enumerate}[(a)]
\item $k \leq C_U d_{\widehat{\Gamma}}(x,y) $
\item For any $j \in \{0, \dots, k\},$
\[ d_{\widehat{\Gamma}}(x_j, \partial \Gamma) = d_{\widehat{\Gamma}}(x, \widehat{\Gamma} \setminus \Gamma) \geq c_u (1 + \min \{ j, k-j\}).\]
\end{enumerate}
\end{definition}

\begin{definition}[Inner Uniform]
A subgraph $\Gamma$ of $\widehat{\Gamma}$ is \emph{inner uniform} in $\widehat{\Gamma}$  if there exist constants $0<c_u, \, C_U < +\infty$ such that for any $x,y \in \Gamma$ there is a path $\gamma_{xy} = (x_0 = x, x_1, \dots, x_k =y)$ between $x$ and $y$ in $\Gamma$ such that 
\begin{enumerate}[(a)]
\item $k \leq C_U d_{\Gamma}(x,y) $
\item For any $j \in \{0, \dots, k\},$
\[ d_{\widehat{\Gamma}}(x_j, \partial \Gamma) = d_{\widehat{\Gamma}}(x, \widehat{\Gamma} \setminus \Gamma) \geq c_u (1 + \min \{ j, k-j\}).\]
\end{enumerate}
\end{definition}

The only difference between a uniform domain and an inner uniform domain is that uniform domains require the length of the path in $\Gamma$ to be comparable to the distance between $x$ and $y$ in the larger graph $\widehat{\Gamma},$ while an inner uniform domain requires the length of the path to be comparable to distance in $\Gamma.$ This somewhat subtle difference is key. Recall $d_\Gamma(x_j, \partial \Gamma) = d_{\widehat{\Gamma}}(x_j, \partial \Gamma)$ if we extend $d_\Gamma$ to $\partial \Gamma.$ We refer the reader to \cite{pd_khe_lsc_finiteMC} and the references therein for more details on such geometric assumptions on domains in the discrete space setting; in particular, Section 8.1 gives many examples of finite (inner) uniform domains. Condition (b) in these definitions can be thought of as a ``banana'' or ``cigar'' condition and says that it must be possible to fit a linearly growing ``banana'' (with respect to the distance to the end points) around all paths from $x$ to $y$. This ``banana'' must stay inside the domain. 

All uniform domains are inner uniform. Domains above Lipschitz functions in $\Z^d$ are uniform. A slit two-dimensional lattice is the typical example of a domain that is inner uniform but not uniform. Similarly, the complement of a discrete parabola in $\Z^2$ is inner uniform but not uniform. In general, slits and ``bottlenecks'' are obstacles to uniformity. An example of a domain that is neither inner uniform nor uniform is $\{(x,y) \in \Z^2: x \leq -2\} \cup \{(x,y) \in \Z^2: x \geq 2\} \cup \{ (-1,0), (0,0), (1,0)\},$ considered as a subgraph of $\Z^2.$ There is no criterion to determine (inner) uniformity, and proving whether a given set is (inner) uniform should be thought of as difficult. 

Inner uniform domains are useful because they allow us to transfer the Harnack inequality from a larger graph to a subgraph. 
\begin{theorem}[Theorem 1.10 of \cite{Kelsey_thesis}]\label{IU_Harnack}
Let $(\widehat{\Gamma}, \mathcal{K}, \pi)$ be a Harnack graph and $\Gamma$ be an inner uniform subgraph of $\widehat{\Gamma}.$ Then $(\Gamma, \mathcal{K}_{\Gamma,N}, \pi)$ is also a Harnack graph. 
\end{theorem}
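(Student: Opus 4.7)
The plan is to verify that $(\Gamma, \mathcal{K}_{\Gamma,N}, \pi)$ satisfies condition (a) of Theorem \ref{VD_PI}, namely volume doubling and the Poincar\'e inequality, both measured with respect to the intrinsic distance $d_\Gamma$. Because the edge weights $\mu^\Gamma$ coincide with $\mu^{\widehat{\Gamma}}$ on edges inside $\Gamma$, and the vertex measure $\pi$ is unchanged, the controlled weights and uniformly lazy hypotheses for $\mathcal{K}_{\Gamma,N}$ transfer directly from $\widehat{\Gamma}$ (in fact $\mathcal{K}_{\Gamma,N}(x,x) \geq \mathcal{K}_{\widehat{\Gamma}}(x,x)$ pointwise). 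The remaining task is purely geometric: transfer doubling and Poincar\'e from $\widehat{\Gamma}$ to $\Gamma$ using inner uniformity.

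\textbf{Doubling.} Let $V_\Gamma(x,r) = \pi(B_\Gamma(x,r))$ where the ball is taken in $d_\Gamma$. The containment $B_\Gamma(x,2r) \subseteq B_{\widehat{\Gamma}}(x,2r)$ gives an easy upper bound on the $\Gamma$-volume. For the reverse comparison, inner uniformity applied to a point $y \in B_\Gamma(x,2r)$ produces a path along which the midpoint $x_*$ satisfies $d_{\widehat{\Gamma}}(x_*,\partial\Gamma) \gtrsim r$ and $d_\Gamma(x,x_*) \lesssim r$. Thus the $\widehat{\Gamma}$-ball $B_{\widehat{\Gamma}}(x_*, cr)$ for some small $c$ lies in $\Gamma$ and inside $B_\Gamma(x, C r)$. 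Combining this with volume doubling on $\widehat{\Gamma}$ via a standard covering argument yields $V_\Gamma(x,2r) \leq C' V_\Gamma(x,r)$.

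\textbf{Poincar\'e inequality.} I would use a Whitney-type decomposition of $B := B_\Gamma(x,r)$ together with a Boman/Jerison chaining argument. Cover $B$ by balls $B_j = B_{\widehat{\Gamma}}(y_j, \rho_j)$ with $\rho_j \asymp d_{\widehat{\Gamma}}(y_j, \partial\Gamma)$, chosen so that each $B_j$ sits strictly inside $\Gamma$ and so that the family has bounded overlap (using doubling on $\widehat{\Gamma}$). Each $B_j$ inherits the Poincar\'e inequality from $\widehat{\Gamma}$, and crucially the edge sum on its right-hand side involves only edges inside $\Gamma$, which are precisely the $\Gamma$-edges. The inner uniform condition then provides, for every $B_j$, a chain of Whitney balls connecting $B_j$ to a fixed ``central'' Whitney ball of radius comparable to $r$, with consecutive links having comparable radii and substantial overlap; the length of each chain is bounded in terms of $c_u, C_U$ and a logarithm of the ratio of radii. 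Telescoping the local Poincar\'e inequalities along these chains and summing over $j$ with bounded overlap produces a global Poincar\'e inequality on $B$ with edge sum on the right-hand side supported in $B_\Gamma(x,\kappa r)$ for some $\kappa \geq 1$.

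\textbf{Main obstacle.} The hard step is the chaining argument: one must control simultaneously the number of links in each chain, the multiplicity with which each Whitney ball appears across all chains, and the way the weights in the local Poincar\'e inequalities accumulate. The inner uniform constants $c_u, C_U$ are exactly what guarantee uniform chain length and overlap, while doubling on $\widehat{\Gamma}$ controls the multiplicity; together these supply the ingredients required by the Boman-type chaining lemma. Once doubling and the (weak) Poincar\'e inequality on $\Gamma$ are in hand, Theorem \ref{VD_PI} immediately promotes $(\Gamma, \mathcal{K}_{\Gamma,N}, \pi)$ to a Harnack graph.
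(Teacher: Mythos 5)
The paper does not prove this theorem; it is imported verbatim from Theorem 1.10 of the Kelsey thesis \cite{Kelsey_thesis}, so there is no in-paper proof to compare against line by line. That said, your outline is the standard and correct proof strategy. The doubling transfer is right: inner uniformity supplies, for each $x$ and $r$, a point $x_r$ with $d_\Gamma(x,x_r)\lesssim r$ and $d_{\widehat\Gamma}(x_r,\partial\Gamma)\gtrsim r$, so the $\widehat\Gamma$-ball $B_{\widehat\Gamma}(x_r, cr)$ sits inside $B_\Gamma(x,r)$, and $\widehat\Gamma$-doubling then pins $V_\Gamma(x,2r)\leq V_{\widehat\Gamma}(x,2r)\lesssim V_{\widehat\Gamma}(x_r,cr)\leq V_\Gamma(x,r)$. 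Your Whitney/Boman chaining argument for the Poincar\'e inequality is likewise the right skeleton, and you have correctly located the crux: bounding chain lengths and the overlap multiplicity of Whitney balls using $c_u, C_U$ and $\widehat\Gamma$-doubling. One remark worth noting: the discrete inner uniformity condition uses $c_u(1+\min\{j,k-j\})$, and the ``$1+$'' guarantees a uniform lower bound on the Whitney scale even at the endpoints of a chain, so the discrete decomposition does not degenerate as it might if one naively transcribed the continuous Whitney construction. Your route is a direct discretization of the continuous argument in Gyrya and Saloff-Coste \cite{bluebook}; the cited thesis, as the paper explains, instead associates to the graph its cable process (a continuous object) and pulls the continuous result of \cite{bluebook} back to the graph. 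Both routes are valid. Your direct route avoids the cable-process machinery but requires one to redevelop the discrete Whitney covering and chaining lemmas from scratch; the cable-process route reuses the continuous lemmas wholesale at the cost of establishing the graph-to-cable correspondence once and for all. Given doubling and the (weak) Poincar\'e inequality on $\Gamma$, Theorem \ref{VD_PI} indeed finishes the proof, as you say.
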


\begin{remark}
The converse of Theorem \ref{IU_Harnack} is not true. For instance, consider the traces of two parabolas in $\Z^2$ (with the lazy simple random walk) connected by a finite number of edges. One such example is $\Gamma = \{ (x,y) \in \Z^2: y \geq x^2 +1\} \cup  \{ (x,y) \in \Z^2: y \leq -x^2 -1\} \cup \{(0,0)\},$ where this denotes the vertex set of a subgraph of $\Z^2$. The continuous version of this example is Harnack by Theorem 7.1 of \cite{ag_lsc_harnackstability}. Therefore, the discrete version is also Harnack by results of \cite{TC_LSC_IsoInfini}. This is an example of a subgraph of a Harnack graph where the subgraph is neither uniform nor inner uniform but $(\Gamma, \mathcal{K}_{\Gamma,N}, \pi)$ is nonetheless Harnack.
\end{remark}

\begin{definition}[Harmonic Profile]
A function $h$ is an \emph{harmonic profile} for an infinite graph $\Gamma$ that is a subgraph of $\widehat{\Gamma}$ if it satisfies the following properties:
\begin{enumerate}[1.]
\item $h > 0$ in $\Gamma$
\item $h = 0$ on the exterior boundary of $\Gamma$
\item $h$ is harmonic in $\Gamma,$ that is, 
\begin{align}\label{harmonic_graph} h(x) &= \sum_{y \in \Gamma} \mathcal{K}_{\widehat{\Gamma}}(x,y) h(y) = \sum_{ y \in \Gamma} \mathcal{K}_{\Gamma,D}(x,y) h(y) \quad \forall x \in \Gamma.\end{align}

(Note $\mathcal{K}_{\Gamma,D}(x,y) = 0$ unless $y \simeq x$ and $h(y)=0$ if $y \not \in \Gamma.$) 

\end{enumerate}
\end{definition}

On finite graphs, there is no such function satisfying properties 1., 2., and 3. above since any harmonic function that is zero on the exterior boundary of $\Gamma$ (which we assume to be non-empty) is zero everywhere. 

We would like to appeal to a variety of pre-existing results about the existence of harmonic profiles and their properties in inner uniform domains. In the continuous space setting, the desired results are found in \cite{bluebook}. These results were transferred to the graph setting in the case of infinite graphs in \cite[Chapter 5]{Kelsey_thesis}; see also \cite[Chapter 8]{pd_khe_lsc_finiteMC}. In general, the technique of \cite{Kelsey_thesis} is to associate with any given graph its cable process. The cable process takes place in a continuous space with a nice Dirichlet form, so the results of \cite{bluebook} apply to it, and there is a one-to-one correspondence between a profile of the cable process and a profile of the graph. 

\begin{proposition}[Prop. 5.1, Corollary 5.3 of \cite{Kelsey_thesis}]
Suppose $\Gamma$ is a proper infinite subgraph of $(\widehat{\Gamma}, \pi, \mu).$ Then there exists a harmonic profile $h$ for $\Gamma.$ Moreover, if $\Gamma$ is inner uniform in $\widehat{\Gamma}$ and $\widehat{\Gamma}$ is Harnack, then the profile $h$ of $\Gamma$ is unique up to multiplication by a constant. 
\end{proposition}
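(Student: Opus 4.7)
The plan is to establish existence by an exhaustion-and-normalization argument and then obtain uniqueness under the inner uniform Harnack hypotheses by invoking a boundary Harnack principle. For existence, fix a reference vertex $x_0\in\Gamma$ and an exhaustion $\Omega_n\uparrow\Gamma$ by finite subgraphs with $x_0\in\Omega_1$. On each $\Omega_n$, let $h_n$ be the unique solution of the finite-state Dirichlet problem that is $\mathcal{K}_{\Gamma,D}$-harmonic on $\Omega_n$, equal to $0$ on $\partial\Gamma$, and equal to $1$ on $\Gamma\setminus\Omega_n$; the minimum principle gives $0<h_n<1$ on $\Omega_n$, and monotonicity of the underlying exit probabilities in $n$ makes $h_n$ pointwise nonincreasing in $n$. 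Normalize by $\tilde h_n := h_n/h_n(x_0)$ so that $\tilde h_n(x_0)=1$ for all $n$.

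To pass to a limit, fix $y\in\Gamma$ and a $\Gamma$-path $y=y_0,y_1,\dots,y_m=x_0$ of length $m=d_\Gamma(y,x_0)$; for $n$ large enough that all $y_i$ lie in $\Omega_n$, harmonicity of $h_n$ at $y_i$ combined with the controlled-weights bound $\mathcal{K}_{\Gamma,D}(y_i,y_{i+1})\geq 1/C_c$ yields $h_n(y_{i+1})\leq C_c\, h_n(y_i)$, so $\tilde h_n(y)\leq C_c^m$. A diagonal argument on the countable vertex set extracts a subsequence $\tilde h_{n_k}$ converging pointwise to some $h$. The limit satisfies $h(x_0)=1$, vanishes on $\partial\Gamma$, and is harmonic on $\Gamma$ because (\ref{harmonic_graph}) involves only finitely many terms at each vertex. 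Strict positivity of $h$ on $\Gamma$ follows from the strong minimum principle, which rules out $h\equiv 0$ on any component once $h(x_0)=1$.

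For uniqueness under the additional inner uniform and Harnack hypotheses, let $h$ and $h'$ be two harmonic profiles, normalized so that $h(x_0)=h'(x_0)$. The key input is a boundary Harnack principle for $\mathcal{K}_{\Gamma,D}$-harmonic functions vanishing on $\partial\Gamma$ in inner uniform subgraphs of Harnack graphs, which forces the ratio $u:=h'/h$ to be globally bounded on $\Gamma$. Performing the Doob $h$-transform, $\mathcal{K}^h(x,y):=\mathcal{K}_{\Gamma,D}(x,y)\,h(y)/h(x)$ is a genuine Markov kernel on $\Gamma$, and $u$ is $\mathcal{K}^h$-harmonic. In the inner uniform Harnack setting, the $h$-transformed walk inherits two-sided Gaussian-type heat kernel bounds with respect to the measure $h^2\pi$ and therefore satisfies a Liouville property for bounded harmonic functions; hence $u$ is constant and $h'$ is a positive scalar multiple of $h$.

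The main obstacle is supplying the boundary Harnack principle and the attendant heat kernel estimates for the $h$-transformed walk in this discrete inner uniform setting, both of which are substantial theorems in their own right. Kelsey's thesis avoids rebuilding this machinery from scratch on the graph by associating to $(\Gamma,\pi,\mu)$ its cable process, a Feller diffusion on the metric realization of the graph carrying a canonical strongly local Dirichlet form; the continuous-space analogues of BHP and profile uniqueness for inner uniform domains are available in \cite{bluebook} and apply on the cable, and the one-to-one correspondence between harmonic profiles of the graph and of its cable process then transfers both existence and uniqueness back to the discrete setting.
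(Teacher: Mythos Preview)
Your argument is essentially correct. For existence, you flesh out what the paper labels ``straightforward'' with a standard exhaustion-and-normalization construction, and this is fine. For uniqueness, the paper takes a shorter route than you do: it simply defers to \cite[Theorem~4.1]{bluebook}, the continuous-space uniqueness theorem, transferred to the graph in one step via the cable-process correspondence. Your graph-intrinsic route through the $h$-transform and Liouville is also valid, but the intermediate boundary Harnack step (``BHP forces $u=h'/h$ to be globally bounded on $\Gamma$'') is neither obviously justified from the purely local statement of Theorem~\ref{bdry_Harnack} nor actually needed. Since Theorem~\ref{ph_Harnack} makes $(\Gamma,\mathcal{K}_h,\pi_h)$ a Harnack graph for any choice of profile $h$, the \emph{strong} Liouville property already holds there (a scale-invariant elliptic Harnack inequality forces every globally positive harmonic function to be constant), so the positive $\mathcal{K}_h$-harmonic function $u=h'/h$ is constant without first establishing boundedness. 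As you correctly note in your final paragraph, both routes ultimately lean on the cable-process transfer from \cite{bluebook}; the paper invokes it once directly, while your route invokes it implicitly through Theorem~\ref{ph_Harnack}.
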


The existence of $h$ is straightforward. The uniqueness of $h$ is more subtle and can be obtained from \cite[Theorem 4.1]{bluebook} via the cable process.

\subsection{\texorpdfstring{$h$}{h}-transform on graphs}\label{h_transform}

The existence of the profile $h$ for a graph $\Gamma$ (considered as a subgraph of a graph $\widehat{\Gamma}$) allows us to consider a reweighted version of $\Gamma,$ which we will refer to as the $h$-transform space. Recall a graph and the random walk structure on it may be given by triples of the form $(\Gamma, \pi, \mu)$ or $(\Gamma, \mathcal{K}, \pi).$ 

Reweight the measure $\pi$ on $\Gamma$ by $h^2$ to obtain the measure $\pi_h(x) = h^2(x) \pi(x).$ As $h>0$ on $\Gamma, \ \pi_h$ remains a positive function on the vertices of $\Gamma.$ Define a Markov kernel by
\begin{align}\label{K_h} 
\mathcal{K}_h(x,y) = \frac{1}{h(x)} \mathcal{K}_{\Gamma,D}(x,y) h(y). 
\end{align}
That this is a Markov kernel follows since $h$ is harmonic as, for all $x \in \Gamma,$
\begin{align*}
\sum_{y \in \Gamma} \mathcal{K}_h(x,y) &= \sum_{y \in \Gamma} \frac{1}{h(x)} \mathcal{K}_{\Gamma,D}(x,y) h(y)=\frac{h(x)}{h(x)}=1.
\end{align*}

Notice $\mathcal{K}_h$ is a Markov kernel, despite that not being the case for $\mathcal{K}_{\Gamma,D}.$ Thus one effect of the $h$-transform is to return us to the setting of Markov kernels as opposed to subMarkovian ones. 

Moreover, $\mathcal{K}_h$ is reversible with respect to $\pi_h$ since $\mathcal{K}_{\Gamma,D}$ is reversible with respect to $\pi$:
\begin{align*}
\mathcal{K}_h(x,y) \pi_h(x) &= \mathcal{K}_h(x,y) h(x)^2 \pi(x)  = h(x) h(y) \mathcal{K}_{\Gamma,D}(x,y)\pi(x) \\&= h(x) h(y) \mathcal{K}_{\Gamma,D}(y,x) \pi(y) = \mathcal{K}_h(y,x) h^2(y)\pi(y) \\&= \mathcal{K}_h(y,x) \pi_h(y).
\end{align*}

Directly giving a formula for $\mathcal{K}_h$ as in (\ref{K_h}) is equivalent to taking reweighted conductances $\mu^h_{xy} = h(x)h(y) \mu_{xy}$ on $\Gamma$ and then defining the Markov kernel as in Section \ref{subgraph}. Note that $\mu^h_{xy} = 0$ if at least one of $x,y \not \in \Gamma,$ so it does not matter whether we think of the Neumann or Dirichlet kernel. Considering the graph this way, the weights $\mu_{xy}^h$ are subordinate to the measure $\pi_h$ due to the harmonicity of $h.$ Further, if $\widehat{\Gamma}$ has controlled weights, the same holds for the $h$-transform space since $h(y)/h(x)$ is bounded below for $x \sim y$ ($x, y \in \Gamma$). Note $\mathcal{K}_h(x,x) = \mathcal{K}_{\Gamma, D}(x,x) = \mathcal{K}_{\widehat{\Gamma}}(x,x),$ so the $h$-transform graph is uniformly lazy if and only if that is true of $\widehat{\Gamma}.$

The heat kernel $p_h(n,x,y)$ on the $h$-transform of $\Gamma$ is the transition density of $\mathcal{K}_h$ and is given by $\mathcal{K}^n_h(x,y)/\pi_h(y).$ The $h$-transform heat kernel on $\Gamma$ and the Dirichlet heat kernel on $\Gamma$ have the following relationship: 
\begin{align*}
p_h(n,x,y) = \frac{\mathcal{K}^n_h(x,y)}{\pi_h(y)} = \frac{\mathcal{K}^n_{\Gamma,D}(x,y)}{h(x) h(y) \pi(y)} = \frac{1}{h(x)h(y)} p_{\Gamma,D}(n,x,y).
\end{align*} 

Under certain conditions, we have good two-sided estimates for the heat kernel of the $h$-transform of $\Gamma,$ which is the content of the next theorem.

\begin{theorem}[Theorem 1.11 and Corollary 5.8 of \cite{Kelsey_thesis}]\label{ph_Harnack}
Suppose $(\widehat{\Gamma},\pi,\mu)$ is a Harnack graph and $\Gamma$ is an inner uniform subgraph of $\widehat{\Gamma}.$ Then $(\Gamma, \mathcal{K}_h, \pi_h)$ is also a Harnack graph. Consequently, there exist constants $c_1,c_2,c_3,c_4>0$ such that, for all $x,y \in \Gamma$ and $n \geq d_\Gamma(x,y),$ 
\begin{align*}
\frac{c_1}{V_h(x, \sqrt{n})} \exp\Big(-\frac{ d_\Gamma^2(x,y)}{c_2 n}\Big) \leq p_h(n,x,y) \leq \frac{c_3}{V_h(x, \sqrt{n})} \exp\Big(-\frac{ d_\Gamma^2(x,y)}{c_4 n}\Big)
\end{align*}
or, equivalently, 
\begin{align*}
\frac{c_1h(x)h(y)}{V_h(x, \sqrt{n})} \exp\Big(-\frac{ d_\Gamma^2(x,y)}{c_2 n}\Big) \leq p_{\Gamma,D}(n,x,y) \leq \frac{c_3 h(x)h(y)}{V_h(x, \sqrt{n})} \exp\Big(-\frac{ d_\Gamma^2(x,y)}{c_4 n}\Big)
\end{align*} 
Here $V_h$ denotes the volume in $\Gamma$ with respect to the measure $\pi_h.$
\end{theorem}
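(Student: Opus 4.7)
The plan is to verify that $(\Gamma, \mathcal{K}_h, \pi_h)$, equipped with the intrinsic metric $d_\Gamma$, satisfies the volume doubling and Poincar\'e inequality conditions of Theorem \ref{VD_PI}(a); the two-sided Gaussian estimate in (c) then yields the stated bound for $p_h$, and the identity
\[ p_{\Gamma,D}(n,x,y) = h(x)\, h(y)\, p_h(n,x,y) \]
derived in Section \ref{h_transform} immediately gives the equivalent estimate for $p_{\Gamma,D}$. Note that $(\Gamma,\mathcal{K}_h,\pi_h)$ automatically inherits controlled weights and uniform laziness from $\widehat{\Gamma}$, because for $x \sim y$ in $\Gamma$ the ratio $h(y)/h(x)$ is bounded above and below by the elliptic Harnack inequality applied on a small ball in $\widehat{\Gamma}$.

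The first key ingredient is a \emph{boundary Harnack principle} for $h$ in $\Gamma$: for $x, y \in \Gamma$ with $d_\Gamma(x,y) \leq r$ and both points at distance $\asymp r$ from $\partial \Gamma$, one has $h(y) \asymp h(x)$, while at scales where the points approach $\partial \Gamma$ one controls $h(y)/h(x)$ by a chain of local Harnack constants. The mechanism is a chain-of-balls argument along the inner uniform path joining $x$ and $y$: inner uniformity gives a path of length $\leq C_U d_\Gamma(x,y)$ whose interior points sit at distance $\geq c_u \min\{j,k-j\}$ from $\partial \Gamma$, so one can cover the path by a controlled number of $\widehat{\Gamma}$-balls on which Theorem \ref{IU_Harnack}'s Harnack inequality (equivalently, the elliptic Harnack inequality for $\widehat{\Gamma}$ applied to balls staying inside $\Gamma$) may be iterated.

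Using this, I would prove doubling for $V_h$ via a Whitney-type decomposition of $d_\Gamma$-balls into subregions where the distance to $\partial \Gamma$ is essentially constant; on each piece $h$ is roughly a single value by the boundary Harnack principle, so $\pi_h \approx h^2 \pi$ inherits doubling from the doubling of $\pi$ (which holds on $\widehat{\Gamma}$, hence on intrinsic balls of $\Gamma$ by Theorem \ref{IU_Harnack}). For the Poincar\'e inequality with respect to $(\mathcal{K}_h,\pi_h)$, I would select an ``anchor'' point $x^\ast \in B_\Gamma(x,r)$ with $d_\Gamma(x^\ast, \partial \Gamma) \asymp r$, guaranteed by inner uniformity; estimate $\sum |f - f(x^\ast)|^2 \pi_h$ by connecting each $y \in B_\Gamma(x,r)$ to $x^\ast$ along the inner uniform path through a chain of Whitney balls on which the usual PI holds; and sum telescopically, absorbing the weight $h^2$ using the boundary Harnack estimates.

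The hard part is the boundary Harnack principle itself. Its standard proof relies on a Carleson-type estimate and an iterative box argument in inner uniform domains, and in the discrete setting one must handle the small-scale discreteness carefully; passing through the cable process, as in \cite{Kelsey_thesis}, lets one import the continuous analogue from \cite{bluebook} and simultaneously obtain uniqueness of $h$ up to a multiplicative constant. Once the boundary Harnack principle and the uniqueness of $h$ are in hand, the Whitney/chain-of-balls machinery that establishes doubling and PI for $\pi_h$ is essentially standard, and Theorem \ref{VD_PI} closes the argument.
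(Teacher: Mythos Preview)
The paper does not prove this theorem; it is quoted verbatim from \cite{Kelsey_thesis} (Theorem 1.11 and Corollary 5.8), which in turn transfers the continuous-space result of \cite{bluebook} to graphs via the cable construction. So there is no in-paper argument to compare your sketch against.

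That said, your outline is faithful to the strategy actually used in those references: one shows that the $h$-transform structure $(\Gamma,\mathcal{K}_h,\pi_h)$ satisfies volume doubling and the Poincar\'e inequality in the intrinsic metric, and then invokes Theorem~\ref{VD_PI}. The engine is indeed the boundary Harnack principle together with a Whitney-type covering of $d_\Gamma$-balls, exactly as you describe, and you are right that the delicate step is the boundary Harnack principle itself, which in the discrete setting is obtained by passing to the cable process and importing the continuous result. One small correction of emphasis: the comparison $h(x)\asymp h(x_r)$ you need on Whitney pieces is not quite ``Theorem~\ref{IU_Harnack}'s Harnack inequality'' (that theorem concerns the Neumann kernel on $\Gamma$), but rather the interior elliptic Harnack inequality for $\widehat{\Gamma}$ applied on balls that stay away from $\partial\Gamma$, chained along the inner-uniform path; combined with the Carleson estimate this yields the needed control of $h$ near the boundary. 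With that adjustment your plan matches the approach in \cite{bluebook,Kelsey_thesis}.
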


The following lemma is useful for computing $V_h$.

\begin{lemma}[{\cite[Proposition 5.5]{Kelsey_thesis}}]\label{compute_Vh}
Let $\Gamma$ be inner uniform in a Harnack graph $(\widehat{\Gamma},\pi,\mu).$ For any $x \in \Gamma, \ r>0,$ let $x_r \in \Gamma$ be a point such that $d_\Gamma(x,x_r) \leq r/4$ and $d(x_r, \partial \Gamma) \geq c_u r/8$ (recall $c_u$ is one of the inner uniformity constants). Then there exist constants $c,C$ (independent of $x,r$) such that 
\begin{align*}
c h(x_r)^2 V_\Gamma(x,r) \leq V_h(x,r) \leq C h(x_r)^2 V_\Gamma(x,r). 
\end{align*}
\end{lemma}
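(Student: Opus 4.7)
The plan is to prove the two inequalities separately, in both cases comparing $h$ on $B_\Gamma(x,r)$ to its value at $x_r$. The main inputs are the elliptic Harnack inequality on $\widehat{\Gamma}$, volume doubling on $\Gamma$ (which holds since $\Gamma$ is itself a Harnack graph by Theorem \ref{IU_Harnack}), and the inner uniform chain structure of $\Gamma$.

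For the lower bound, since $d(x_r,\partial\Gamma)\geq c_u r/8$, the ball $B_{\widehat{\Gamma}}(x_r, c_u r/8)$ lies entirely in $\Gamma$, and $h$ (extended by zero off $\Gamma$) is harmonic there with respect to $\mathcal{K}_{\widehat{\Gamma}}$. The elliptic Harnack inequality on $\widehat{\Gamma}$ then gives $h(y)\geq c_1 h(x_r)$ on the smaller ball $B_{\widehat{\Gamma}}(x_r,\eta c_u r/8)$. I would choose $\rho$ to be a small constant multiple of $r$ (say $\rho=\min(\eta c_u/8,\ c_u/16,\ 3/4)\cdot r$) so that (i) $B_\Gamma(x_r,\rho)=B_{\widehat{\Gamma}}(x_r,\rho)$ because the ball is deep in $\Gamma$, (ii) $B_\Gamma(x_r,\rho)\subseteq B_\Gamma(x,r)$ via the triangle inequality using $d_\Gamma(x,x_r)\leq r/4$, and (iii) $V_\Gamma(x_r,\rho)\geq c_2 V_\Gamma(x,r)$ by volume doubling on $\Gamma$ together with $B_\Gamma(x,r)\subseteq B_\Gamma(x_r,5r/4)$. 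Then
\[
V_h(x,r)\ \geq\ \sum_{y\in B_\Gamma(x_r,\rho)} h(y)^2\pi(y)\ \geq\ c_1^2\, h(x_r)^2\, V_\Gamma(x_r,\rho)\ \geq\ c_1^2 c_2\, h(x_r)^2\, V_\Gamma(x,r).
\]

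For the upper bound, it suffices to establish a Carleson-type estimate $h(y)\leq C h(x_r)$ for every $y\in B_\Gamma(x,r)$, for then
\[
V_h(x,r)\ =\ \sum_{y\in B_\Gamma(x,r)} h(y)^2\pi(y)\ \leq\ C^2 h(x_r)^2 V_\Gamma(x,r).
\]
To get the Carleson estimate, I would use inner uniformity to connect $y$ to $x_r$ by a path $(y_0=x_r,y_1,\dots,y_k=y)$ of length $k\leq C_U d_\Gamma(x_r,y)\lesssim r$ along which $d(y_j,\partial\Gamma)\geq c_u(1+\min(j,k-j))$. Covering this path by a chain of balls in $\widehat{\Gamma}$ whose radii are comparable to the local distance from $\partial\Gamma$, so that the covering balls lie deep enough in $\Gamma$ for $h$ to be $\mathcal{K}_{\widehat{\Gamma}}$-harmonic, I would chain the elliptic Harnack inequality from $x_r$ down toward $y$. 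The scales of these balls grow as one moves from the shallow endpoint toward the interior, so only $O(\log \mathrm{depth}^{-1})$ distinct scales appear in principle, and the inner uniform chain condition lets one arrange the chain so that the total accumulated Harnack constant is uniformly bounded, independently of $\mathrm{dist}(y,\partial\Gamma)$. This is exactly the content of the discrete Carleson estimate in \cite{Kelsey_thesis}, which adapts the continuous-space argument of \cite{bluebook} via the cable process.

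The main obstacle is precisely this Carleson-type upper bound on $h$ over $B_\Gamma(x,r)$. A naive chaining of Harnack inequalities would produce a multiplicative constant growing with the number of chain links and hence blowing up as $y$ approaches $\partial\Gamma$. The inner uniform geometry is used in an essential way to arrange the chain at geometrically growing scales, and the vanishing of $h$ on $\partial\Gamma$ is what ultimately absorbs the accumulated constants so as to yield a uniform bound; everything else in the proof is routine bookkeeping involving volume doubling and triangle inequalities.
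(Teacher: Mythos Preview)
The paper does not give its own proof of this lemma; it is stated with an attribution to \cite[Proposition 5.5]{Kelsey_thesis}, and the subsequent remark simply records the Carleson-type inequality $h(y)\leq A\,h(x_r)$ for $y\in B_\Gamma(x,r)$ as a known fact with pointers to \cite{bluebook} and \cite{pd_khe_lsc_finiteMC}. So there is no in-paper argument to compare against beyond that remark, which already matches the skeleton of your proposal: lower bound by Harnack on the deep ball around $x_r$ plus volume doubling, upper bound by the Carleson estimate.

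Your lower-bound argument is correct as written. For the upper bound you correctly isolate the Carleson estimate as the entire content, but your sketch of its proof has a gap. Chaining the elliptic Harnack inequality along an inner-uniform path from $x_r$ to a point $y$ near $\partial\Gamma$ produces, as you note, $O(\log(r/d(y,\partial\Gamma)))$ links at geometrically increasing scales; each link costs a fixed multiplicative Harnack constant, so the accumulated factor is a power of $r/d(y,\partial\Gamma)$, not a constant. Inner uniformity alone does not repair this, and saying that the vanishing of $h$ on $\partial\Gamma$ ``absorbs the accumulated constants'' is the right slogan but not yet an argument. The actual proof (as in \cite[Section 4.3]{bluebook}, transferred to graphs in \cite{Kelsey_thesis}) is an iteration of a different kind: one shows, using the boundary decay of $h$ together with Harnack on interior balls, that the maximum of $h$ over $B_\Gamma(x,r)$ cannot be attained too close to $\partial\Gamma$ without forcing a nearby interior value that is a fixed fraction larger, and then iterates this dichotomy at shrinking scales to reach a contradiction unless $\sup_{B_\Gamma(x,r)} h \leq A\,h(x_r)$. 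Equivalently, one can invoke the boundary Harnack principle (Theorem~\ref{bdry_Harnack}) directly. Either way, the vanishing of $h$ enters through a quantitative decay estimate near $\partial\Gamma$, not merely by shortening a Harnack chain. If you are content to cite the Carleson estimate from \cite{Kelsey_thesis} or \cite{bluebook} (as the paper does), your reduction is complete; if you intend to prove it, the chaining sketch needs to be replaced by the iteration just described.
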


\begin{remark}
The existence of points $x_r$ as in the lemma above is a relatively straightforward consequence of the inner uniform assumption (see \cite[Lemma 3.20]{bluebook}, \cite[Lemma 4.7]{Kelsey_thesis}). 
\end{remark}

\begin{remark}\label{Carleson_est}
The definition of such points $x_r$ is motivated by the following key property of $h$: There exists a constant $A$ such that
\[ h(y) \leq A h(x_r) \quad \forall \, r>0,\ y \in B_\Gamma(x,r).\]
This property is called a Carleson estimate, and it follows from arguments given in Section 4.3.3 (in particular (4.28)) of \cite{bluebook}, Chapter 8 of \cite{pd_khe_lsc_finiteMC}, and Theorem 2 of \cite{aikawa}. This property is crucial to Lemma \ref{compute_Vh}. 

Moreover, due to the harmonicity of $h$ and the inner uniform property, $h(x_{2r}) \approx h(x_r),$ and $V_\Gamma$ is doubling. 
\end{remark}

\begin{remark}
In the situation where we can compute $h$, the above abstract examples become concrete. For example, if $\widehat{\Gamma} = \Z^m$ and $\Gamma = \{ (x_1, \dots, x_m) \in \Z^m: x_m> 0\}$ is the upper half-space, then $h(x_1,\dots,x_m) = x_m.$ It is easy to verify the above claims about $h$ for this example. However, there are only a few situations where exact formulas for $h$ are known, and, in general, estimating $h$ is a hard problem. 
\end{remark}

The following theorem holds for continuous spaces and is discussed in Chapter 4 of \cite{bluebook}. Once again, the theorem can be transferred to the discrete setting using the cable process (see \cite{Kelsey_thesis}).

\begin{theorem}[Boundary Harnack Principle {\cite[Theorem 4.18]{bluebook}}]\label{bdry_Harnack}
Assume $\Gamma$ is an inner uniform subgraph of the Harnack graph $(\widehat{\Gamma}, \pi, \mu).$ Then there exist constants $A_0, A_1 \in (1,\infty)$ such that for any $\xi \in \partial_I \Gamma$ and any two positive harmonic functions $f,g$ on $B_{\Gamma}(\xi, A_0 r)$ that are zero along $\partial \Gamma \cap B_{\widehat{\Gamma}}(\xi, A_0 r),$ we have 
\begin{align*}
\frac{f(x)}{f(x')} \leq A_1 \frac{g(x)}{g(x')} \quad \forall x, x' \in B_{\Gamma}(\xi, r).
\end{align*}
\end{theorem}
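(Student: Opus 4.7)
The plan is to reduce the Boundary Harnack Principle to the ordinary elliptic Harnack inequality in the $h$-transform space, using the harmonic profile of $\Gamma$ as a reference function. The key observation is that dividing a positive Dirichlet-harmonic function on $\Gamma$ by the profile $h$ produces a $\mathcal{K}_h$-harmonic function, and by Theorem \ref{ph_Harnack} the $h$-transform space $(\Gamma, \mathcal{K}_h, \pi_h)$ is itself a Harnack graph.

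First I would fix a (global) harmonic profile $h$ of $\Gamma$, whose existence is guaranteed by the proposition preceding Section \ref{h_transform}; uniqueness is not needed here. Define $u := f/h$ and $v := g/h$ on $\Gamma$; both are positive and well-defined since $h > 0$ on $\Gamma$. For any $x \in B_{\Gamma}(\xi, A_0 r)$, the Dirichlet-harmonicity of $f$ (which uses $f \equiv 0$ on $\partial \Gamma$) together with the definition $\mathcal{K}_h(x,y) = \mathcal{K}_{\Gamma,D}(x,y) h(y)/h(x)$ gives
\[
\sum_{y \in \Gamma} \mathcal{K}_h(x,y) u(y) = \frac{1}{h(x)} \sum_{y \in \Gamma} \mathcal{K}_{\Gamma,D}(x,y) f(y) = \frac{f(x)}{h(x)} = u(x),
\]
so $u$, and likewise $v$, is $\mathcal{K}_h$-harmonic on $B_{\Gamma}(\xi, A_0 r)$.

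Now Theorem \ref{ph_Harnack} tells us $(\Gamma, \mathcal{K}_h, \pi_h)$ is Harnack, hence satisfies the elliptic Harnack inequality with some constants $\eta \in (0,1)$ and $C_H > 0$ depending only on the background data. Taking $A_0 = 1/\eta$ (adjusted slightly if needed to accommodate discreteness) and applying this inequality to $u$ and $v$ yields
\[
\sup_{B_{\Gamma}(\xi, r)} u \leq C_H \inf_{B_{\Gamma}(\xi, r)} u \quad \text{and} \quad \sup_{B_{\Gamma}(\xi, r)} v \leq C_H \inf_{B_{\Gamma}(\xi, r)} v.
\]
For any $x, x' \in B_{\Gamma}(\xi, r)$, rewriting in ratio form and multiplying,
\[
\frac{f(x)}{f(x')} \cdot \frac{g(x')}{g(x)} \;=\; \frac{u(x)}{u(x')} \cdot \frac{v(x')}{v(x)} \;\leq\; C_H^2,
\]
which is exactly the desired inequality with $A_1 = C_H^2$.

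The main obstacle is not in the short reduction above but in the heavy input it relies on: the existence of a globally defined harmonic profile $h$ with $h > 0$ on $\Gamma$ and $h \equiv 0$ on $\partial \Gamma$, and the non-trivial fact that the $h$-transform space inherits the Harnack property. Both of these are imported via the cable-process correspondence from the continuous-space theory of \cite{bluebook,Kelsey_thesis}. Once those tools are granted, the Boundary Harnack Principle follows essentially for free from the interior elliptic Harnack inequality applied to $f/h$ and $g/h$.
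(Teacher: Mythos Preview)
The paper does not give its own proof of this statement: it is imported wholesale from \cite[Theorem~4.18]{bluebook}, transferred to the graph setting via the cable process as in \cite{Kelsey_thesis}. So there is no in-paper argument to compare against, only the logical structure of the cited sources.

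Your reduction is clean and, taken at face value within the paper's list of black boxes, formally valid: $f/h$ and $g/h$ are indeed $\mathcal{K}_h$-harmonic, and the elliptic Harnack inequality on $(\Gamma,\mathcal{K}_h,\pi_h)$ gives exactly the ratio comparison. The concern is circularity in the underlying development. In \cite{bluebook} the Boundary Harnack Principle (Chapter~4) is established \emph{before} the $h$-transform heat-kernel theory, and is in fact a key ingredient in it. Concretely, the Carleson-type estimate $h(y)\le A\,h(x_r)$ for $y\in B_\Gamma(x,r)$---which the paper itself attributes to \cite[Section~4.3.3]{bluebook} in the remark after Lemma~\ref{compute_Vh}---is what makes $V_h(x,r)\approx h(x_r)^2 V_\Gamma(x,r)$ work and hence what yields doubling of $V_h$. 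Without that, one does not get Theorem~\ref{ph_Harnack}. So invoking Theorem~\ref{ph_Harnack} to deduce Theorem~\ref{bdry_Harnack} reverses the logical order of the source and is circular as a stand-alone proof.

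That said, your observation is a useful one to keep: once the full $h$-transform machinery is in place, Boundary Harnack is an immediate corollary of interior Harnack for $\mathcal{K}_h$. This is the ``right'' way to \emph{think} about why the inequality holds, even if it is not the way it is \emph{proved} in the references.
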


\subsection{Hitting Probabilities and Dirichlet Kernels in the Inner Uniform Case} 

Theorem \ref{ph_Harnack} gave two-sided estimates of $p_{\Gamma,D}$ in terms of $h;$ whenever we can estimate $h$ on part (or all) of $\Gamma,$ the abstract estimate of $p_{\Gamma,D}$ becomes more concrete.

\begin{lemma}[Behavior of $h$ in transient case]\label{his1}
Let $\Gamma := \widehat{\Gamma} \setminus K$ be inner uniform in the Harnack graph $(\widehat{\Gamma}, \pi, \mu).$  If $\widehat{\Gamma}$ is $S$-transient with respect to $K,$  then the profile $h$ of $\Gamma$ is given by $1- \psi_K.$ If $\widehat{\Gamma}$ is uniformly $S$-transient with respect to $K,$ then $h \approx 1$.
\end{lemma}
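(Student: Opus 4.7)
The plan is to prove the lemma in two parts. The first identifies $h$ with $1 - \psi_K$ by exhibiting the latter as a harmonic profile and invoking the uniqueness statement cited above. The second upgrades the pointwise positivity $h>0$ to a uniform two-sided bound under the stronger uniform $S$-transience hypothesis.

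For the identification, set $h_0 := 1 - \psi_K$. The function $\psi_K$ is harmonic in $\Gamma$ with respect to $\mathcal{K}_{\widehat{\Gamma}}$ by the first-step Markov decomposition already used in the opening lemma of Section~\ref{upper}; since $\psi_K \equiv 1$ on $K$, this rewrites as $h_0(x) = \sum_{y \in \Gamma} \mathcal{K}_{\widehat{\Gamma}}(x,y)\, h_0(y)$ for all $x \in \Gamma$. Moreover $h_0 \equiv 0$ on $\partial \Gamma \subseteq K$, and $S$-transience (via the equivalence of (a) and (b) in that lemma) gives $\psi_K < 1$ on all of $\Gamma$, so $h_0 > 0$ there. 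Hence $h_0$ is a harmonic profile, and by the uniqueness proposition (applicable since $\Gamma$ is inner uniform in the Harnack graph $\widehat{\Gamma}$), we have $h = c\, h_0$ for some $c > 0$; we normalize by $c = 1$.

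For the uniform bounds, the upper bound $h \leq 1$ is immediate from $\psi_K \geq 0$. For the lower bound, let $(L, \varepsilon)$ be the constants of uniform $S$-transience and set $F_L := \{y \in \Gamma : d(y, K) \geq L\}$, so that $h \geq \varepsilon$ on $F_L$. Rewriting the profile equation at any $x \in \Gamma$ as
\[ (1 - \mathcal{K}_{\widehat{\Gamma}}(x,x))\, h(x) \;=\; \sum_{z \sim x,\; z \in \Gamma} \mathcal{K}_{\widehat{\Gamma}}(x,z)\, h(z), \]
controlled weights ($\mathcal{K}_{\widehat{\Gamma}}(x,z) \geq 1/C_c$ for $z \sim x$) together with $1 - \mathcal{K}_{\widehat{\Gamma}}(x,x) \leq 1$ and $h \geq 0$ yield $h(x) \geq C_c^{-1} h(z)$ for every $\Gamma$-neighbor $z$ of $x$; iterating along a $\Gamma$-path of length $k$ from $x$ to $y$ gives $h(x) \geq C_c^{-k} h(y)$. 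To produce a suitable $y \in F_L$ at bounded $\Gamma$-distance from $x$, choose any $o \in \Gamma$ with $d_\Gamma(x,o) \geq 2\lceil L/c_u \rceil$ (such $o$ exists because $\Gamma$ is infinite, connected, and locally finite) and apply inner uniformity to obtain a path $(x = x_0, x_1, \ldots, x_k = o)$ in $\Gamma$ satisfying $d(x_j, \partial \Gamma) \geq c_u(1 + \min\{j, k-j\})$. Setting $j_0 := \lceil L/c_u \rceil$, the choice of $o$ forces $j_0 \leq k/2$, so $y := x_{j_0}$ lies in $F_L$ while $d_\Gamma(x,y) \leq j_0$. The two ingredients combine to give $h(x) \geq C_c^{-\lceil L/c_u \rceil}\, \varepsilon$, a positive constant independent of $x$.

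The main obstacle is the geometric step of producing $y$: inner uniformity is precisely what ensures a $\Gamma$-path leaving $\partial \Gamma$ at a controlled linear rate, even when $x$ itself sits on $\partial_I \Gamma$. Without that structural assumption, $\Gamma$-paths could be forced to linger near the boundary and the one-step controlled-weights iteration would not deliver a uniform bound.
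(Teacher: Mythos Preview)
Your proof is correct and follows essentially the same two-part strategy as the paper: identify $1-\psi_K$ as a harmonic profile (harmonic in $\Gamma$, vanishing on $\partial\Gamma$, strictly positive by $S$-transience), then invoke uniqueness; for the uniform bound, use the one-step controlled-weights inequality $h(x)\geq C_c^{-1}h(z)$ for $\Gamma$-neighbors and iterate to reach a point at distance $\geq L$ from $K$.

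The one noteworthy difference is that you make explicit the geometric step the paper leaves informal. The paper writes only ``applying this inequality a finite number of times (since $d(x,K)\geq 1$ for all $x\in\Gamma$),'' without saying why that number is bounded independently of $x$. You supply the missing justification by invoking inner uniformity directly: an inner-uniform path from $x$ to a far point $o$ escapes $\partial\Gamma$ at linear rate, so the vertex $x_{j_0}$ with $j_0=\lceil L/c_u\rceil$ already sits in $F_L$, and the iteration length is at most $\lceil L/c_u\rceil$. This is a genuine clarification, since without some geometric input there is no reason a $\Gamma$-path from $x$ need reach distance $L$ from $K$ in uniformly bounded time.
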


\begin{proof}
Since $\psi_K$ is the hitting probability of $K,$ and the exterior boundary of $\Gamma$ is contained in $K, \ \psi_K$ is harmonic in $\Gamma.$ Further, $0\leq \psi_K \leq 1$ on $\Gamma$ and $\psi_K \equiv 1$ on $K.$ Hence $h = 1 - \psi_K$ is a harmonic function inside of $\Gamma$ that is zero on the exterior boundary of $\Gamma$. Since $\widehat{\Gamma}$ is $S$-transient with respect to $K,$ there exists some $y \in \Gamma$ such that $\psi_K(y) <1.$ Thus $h(y) > 0,$ and, by the maximum principle, $h(x) >0$ for all $x \in \Gamma.$ Therefore $h$ is the profile of $\Gamma.$

Now suppose $\widehat{\Gamma}$ is uniformly $S$-transient with respect to $K.$ Then there exist $L, \varepsilon >0$ such that $\psi_K(x) \leq 1- \varepsilon$ whenever $d(x,K) \geq L.$ (Note the distance from $x$ to $K$ is the same whether considered in all of $\widehat{\Gamma}$ or only in $\Gamma$.) Hence for $d(x,K) \geq L,$ we have $\varepsilon \leq 1- \psi_K(x) = h(x).$ From the definition of a harmonic function, $h(x) \geq (1/C_c) h(y)$ for $x\sim y, \ x,y \in \Gamma,$ where $C_c$ is the constant for controlled weights. Applying this inequality a finite number of times (since $d(x,K) \geq 1 \ \forall x \in \Gamma),$ there exists $\varepsilon_* >0$ such that $\varepsilon_* \leq h(x) \leq 1$ for all $x \in \Gamma.$
\end{proof}

\begin{corollary}\label{D_N_similar}
Assume that $\widehat{\Gamma}$ is a Harnack graph that is uniformly  $S$-transient with respect to $K$ and that $\Gamma := \widehat{\Gamma} \setminus K$ is inner uniform. Then there exist constants $0<c, \, C < +\infty$ such that 
\[ cp_{\Gamma,N}(Cn, x, y) \leq p_{\Gamma,D}(n,x,y) \leq p_{\Gamma,N}(n,x,y). \]

If $\widehat{\Gamma}$ is $S$-transient with respect to $K$, then the Neumann and Dirichlet heat kernels are comparable in the region where $h \approx 1.$ 
\end{corollary}

In other words, adding killing along $\Gamma$ does not significantly alter the behavior of the heat kernel in this setting. The corollary above can be compared with Theorem 3.1 of \cite{ag_lsc_extcptset}, where a similar result is obtained for Riemannian manifolds when $K$ is compact and the manifold is transient.

\begin{proof}
The upper bound is immediate. Since we are in the setting where $\Gamma$ is an inner uniform subgraph of a Harnack $\widehat{\Gamma},$ by Theorem \ref{IU_Harnack}, $(\Gamma, \mathcal{K}_{\Gamma,N}, \pi)$ is a Harnack graph. Thus there exist constants $c_1, c_2, c_3, c_4 >0$ such that for all $x,y\in \Gamma$ and all $n \geq d(x,y),$ 
\begin{align*}
\frac{c_1}{V(x,\sqrt{n})} \exp\Big(-\frac{d_\Gamma^2(x,y)}{c_2 n}\Big) \leq p_{\Gamma,N}(n,x,y) \leq \frac{c_3}{V(x,\sqrt{n})} \exp\Big(-\frac{d_\Gamma^2(x,y)}{c_4 n}\Big).
\end{align*} 
From Theorem \ref{ph_Harnack}, we also know that the $h$-transform of $\Gamma$ is Harnack. In the uniformly $S$-transient setting,  $h \approx 1$ by Lemma \ref{his1}. Therefore $h(x) \approx h(y) \approx 1$ and $V_h \approx V.$ Hence there exist constants $b_1, b_2, b_3, b_4>0$ such that 
\begin{align*}
\frac{b_1}{V(x,\sqrt{n})} \exp\Big(-\frac{d_\Gamma^2(x,y)}{b_2 n}\Big) \leq p_{\Gamma,D}(n,x,y) \leq \frac{b_3}{V(x,\sqrt{n})} \exp\Big(-\frac{d_\Gamma^2(x,y)}{b_4 n}\Big).
\end{align*} 
Hence $p_{\Gamma,N}, \ p_{\Gamma, D}$ satisfy two-sided Gaussian estimates and we obtain the desired lower bound. This argument holds whenever $h(x), h(y) \approx 1,$ so the statement about the transient case follows.
\end{proof}

\begin{theorem}[Two-sided estimates on $\psi_K$]\label{IU-psibd}
Suppose that $\Gamma:= \widehat{\Gamma} \setminus K$ is inner uniform in the Harnack graph $(\widehat{\Gamma}, \pi, \mu).$ 

Then, where the constants for $\approx$ depend on the constants appearing in the inner uniform, Harnack, controlled weights, and uniformly lazy assumptions, 
\begin{align}
\psi_K(x) \approx \sum_{n \geq d_{\Gamma}^2(x,\partial_I \Gamma)} \sum_{\substack{y \in \partial_I \Gamma: \\ d_{\Gamma}^2(x,y) \leq n}} \frac{h(x)}{h(x_{\sqrt{n}})} \frac{h(y)}{h(y_{\sqrt{n}})} \frac{\pi(y)}{V_\Gamma(x, \sqrt{n})} \quad \forall x \in \Gamma \setminus \partial_I \Gamma.
\end{align}

If, in addition, $\Gamma$ is uniformly $S$-transient, then the two-sided bound 
\begin{align}\label{2sided_trans}
\psi_K(x) \approx \sum_{n \geq d_\Gamma^2(x, \partial_I \Gamma)} \sum_{\substack{y \in \partial_I \Gamma: \\ d_{\Gamma}^2(x,y) \leq n}}  \frac{\pi(y)}{V_\Gamma(x, \sqrt{n})}
\end{align}
holds, where the constants in $\approx$ are as above and also depend upon $L, \varepsilon$ from the uniformly $S$-transient assumption.
\end{theorem}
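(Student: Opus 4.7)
The plan is to combine the exact decomposition
\[ \psi_K(x) = \sum_{n \geq 2} \sum_{v \in \partial \Gamma} \sum_{\substack{y \sim v \\ y \in \Gamma}} \mathcal{K}_{\Gamma, D}^{n-1}(x,y)\, \mathcal{K}_{\widehat{\Gamma}}(y,v) \]
from (\ref{psi_equals}) with the sharp two-sided heat kernel estimates available for inner uniform subgraphs.  By controlled weights, $\mathcal{K}_{\widehat{\Gamma}}(y,v) \approx 1$ and each $y \in \partial_I \Gamma$ has a bounded positive number of neighbors $v \in \partial \Gamma$, so the problem reduces, both from above and below, to estimating $\sum_{n \geq 1} \sum_{y \in \partial_I \Gamma} p_{\Gamma,D}(n,x,y)\, \pi(y)$.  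Plugging in the estimate of Theorem \ref{ph_Harnack}, namely
\[ p_{\Gamma,D}(n,x,y) \approx \frac{h(x)h(y)}{V_h(x,\sqrt{n})} \exp\!\Bigl(-\frac{c\, d_\Gamma^2(x,y)}{n}\Bigr), \]
reduces the task to summing a Gaussian expression with $V_h$ in place of $V_{\widehat{\Gamma}}$ and $d_\Gamma$ in place of $d_{\widehat{\Gamma}}$.

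With this in place, I would carry out the same time summation that was used in the proof of Theorem \ref{psi-upperbd}.  Because Theorem \ref{ph_Harnack} makes $(\Gamma,\mathcal{K}_h,\pi_h)$ itself a Harnack graph, $V_h$ is doubling, so the dyadic decomposition argument shows that the tail portion $n \geq d_\Gamma^2(x,y)$ dominates and yields, for each fixed $y$,
\[ \sum_{n \geq 2} \frac{h(x)h(y)\pi(y)}{V_h(x,\sqrt{n})} \exp\!\Bigl(-\frac{c\, d_\Gamma^2(x,y)}{n}\Bigr) \approx \sum_{n \geq d_\Gamma^2(x,y)} \frac{h(x)h(y)\pi(y)}{V_h(x,\sqrt{n})}. \]
The upper bound follows verbatim from the earlier argument.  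For the matching lower bound one restricts the sum to $n \geq C d_\Gamma^2(x,y)$ with $C$ chosen so that $\exp(-c d_\Gamma^2(x,y)/n) \geq e^{-1}$, then uses doubling of $V_h$ to push the starting index back to $d_\Gamma^2(x,y)$ at the cost of a multiplicative constant.

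The main obstacle is the next step: converting the $V_h$-ball at $x$ into the symmetric quantity $h(x_{\sqrt{n}})\, h(y_{\sqrt{n}})\, V_\Gamma(x,\sqrt{n})$ that appears in the statement.  Since the index constraint gives $d_\Gamma(x,y) \leq \sqrt{n}$, doubling of $V_h$ in $(\Gamma,\mathcal{K}_h,\pi_h)$ yields $V_h(x,\sqrt{n}) \approx V_h(y,\sqrt{n})$; combined with Lemma \ref{compute_Vh}, this gives
\[ V_h(x,\sqrt{n})^2 \approx V_h(x,\sqrt{n})\, V_h(y,\sqrt{n}) \approx h(x_{\sqrt{n}})^2 h(y_{\sqrt{n}})^2\, V_\Gamma(x,\sqrt{n})\, V_\Gamma(y,\sqrt{n}). \]
Doubling of $V_\Gamma$ is available since Theorem \ref{IU_Harnack} makes $(\Gamma,\mathcal{K}_{\Gamma,N},\pi)$ Harnack, and so $V_\Gamma(x,\sqrt{n}) \approx V_\Gamma(y,\sqrt{n})$; taking square roots then gives $V_h(x,\sqrt{n}) \approx h(x_{\sqrt{n}})h(y_{\sqrt{n}}) V_\Gamma(x,\sqrt{n})$.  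Substituting this identity and interchanging the order of summation over $n$ and $y \in \partial_I \Gamma$ (so that the per-$y$ lower bound $n \geq d_\Gamma^2(x,y)$ becomes an explicit constraint on $y$) produces exactly the claimed expression.

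The second assertion follows immediately: under uniform $S$-transience, Lemma \ref{his1} gives $h \approx 1$ on $\Gamma$, so every $h$-factor in the summand is pinched between two positive constants and drops out, leaving the simpler form (\ref{2sided_trans}).  In summary, everything after the reduction to $V_h$-Gaussians is a faithful adaptation of the proof of Theorem \ref{psi-upperbd} to the $h$-transformed graph, and the only genuinely new ingredient is the careful symmetrization of the $V_h$-volume between the two endpoints of the transition.
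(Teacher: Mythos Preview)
Your proposal is correct and follows essentially the same route as the paper: the identical last-exit decomposition, the replacement of $\mathcal{K}_{\Gamma,D}$ by the $h$-transformed kernel via Theorem \ref{ph_Harnack}, the same doubling-based time summation, and the same appeal to Lemma \ref{his1} for the uniformly $S$-transient case. The only cosmetic difference is in the symmetrization step: the paper argues directly that $h(x_{\sqrt n})\approx h(y_{\sqrt n})$ via the chain $h(y_{\sqrt n})^2 V_\Gamma(y,\sqrt n)\le C V_h(y,\sqrt n)\le C V_h(x,2\sqrt n)\le C h(x_{\sqrt n})^2 V_\Gamma(y,\sqrt n)$, whereas you square and take a root; both use exactly the same ingredients (doubling of $V_h$, Lemma \ref{compute_Vh}, doubling of $V_\Gamma$).
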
 

\begin{remark}
In Theorem \ref{psi-upperbd}, the main step of the proof that resulted in an upper bound (without a matching lower bound) came from using the inequality $\mathcal{K}_{\Gamma,D}^{n-1} \leq \mathcal{K}_{\Gamma,N}^{n-1}.$ In Theorem \ref{psi-upperbd}, no assumptions about the geometry of $\Gamma$ (or $K$) were made, and the proof uses $d_{\widehat{\Gamma}}.$ Theorem \ref{IU-psibd} instead uses the distance $d_\Gamma,$ so while these theorems are similar, the main objects differ. If $\Gamma$ is \emph{uniform} in $\widehat{\Gamma},$ then $d_\Gamma \approx d_{\widehat{\Gamma}}.$ However, even under uniformity, the upper bounds of Theorem \ref{psi-upperbd} and Theorem \ref{IU-psibd} only clearly agree up to a constant if there is also some sort of doubling of the set $\partial \Gamma$ as in Corollary \ref{simpler}.
\end{remark}

\begin{proof}
For $x \in \Gamma \setminus \partial_I \Gamma, \  d_\Gamma(x, K) \geq 2$ and
\begin{align*}
\psi_K(x) &= \bP^x(\tau_K < + \infty) = \sum_{n\geq 2} \sum_{v \in \partial \Gamma} \sum_{\substack{y \sim v \\ y \in \Gamma}} \mathcal{K}_{\Gamma,D}^{n-1}(x,y) \mathcal{K}_{\widehat{\Gamma}}(y,v)\\ 
&= \sum_{n\geq 2} \sum_{v \in \partial \Gamma} \sum_{\substack{y \sim v \\ y \in \Gamma}} \frac{h(x)}{h(y)} \mathcal{K}_{h}^{n-1}(x,y) \mathcal{K}_{\widehat{\Gamma}}(y,v)  \\
&\approx  \sum_{n\geq 2} \sum_{v \in \partial \Gamma} \sum_{y \sim v} \frac{ h(x) h(y) \pi(y)}{V_h(x, \sqrt{n})} \exp\Big( \frac{- d_\Gamma^2(x,y)}{n}\Big) \\
&\approx \sum_{n\geq 2} \sum_{y \in \partial_I \Gamma} \frac{ h(x) h(y) \pi(y)}{V_h(x, \sqrt{n})} \exp\Big( \frac{- d_\Gamma^2(x,y)}{n}\Big),
\end{align*}
where we have used that $\mathcal{K}_{\widehat{\Gamma}}(y,v)$ is roughly constant (by the controlled weight hypothesis), the result of Theorem \ref{ph_Harnack} for $\mathcal{K}_h^{n-1},$ and that each $y \in \partial_I \Gamma$ is adjacent to at least one, but at most finitely many, $v \in \partial \Gamma$ (uniformly over $y$). 

Since $(\Gamma, \mathcal{K}_h, \pi_h)$ is a Harnack graph, it must be doubling and satisfy the Poincar\'{e} inequality. Taking the sum in time $n$ and using Lemma \ref{compute_Vh} to estimate $V_h,$ 
\begin{align*}
\sum_{n \geq 2} \frac{1}{V_h(x, \sqrt{n})} \exp\Big(-\frac{ d_\Gamma^2(x,y)}{n}\Big) \approx \sum_{n \geq d_\Gamma^2(x,y)} \frac{1}{V_h(x, \sqrt{n})} \approx \sum_{n \geq d_\Gamma^2(x,y)} \frac{1}{h(x_{\sqrt{n}})^2 V_\Gamma(x, \sqrt{n})} ,
\end{align*}
where the upper bound follows from the same argument as in Theorem \ref{psi-upperbd} and the lower bound comes from forgetting the earlier terms of the sum. 

If $d_{\Gamma}(x,y) \leq \sqrt{n},$ then $h(x_{\sqrt{n}}) \approx h(y_{\sqrt{n}})$. This follows from the inequality
\begin{align*}
 h(y_{\sqrt{n}})^2 V_\Gamma(y, \sqrt{n}) &\leq C V_h(y, \sqrt{n}) \leq C V_h(x, 2 \sqrt{n}) \leq  C h(x_{\sqrt{n}})^2 V_\Gamma(x, 2 \sqrt{n}) \\&\leq C h(x_{\sqrt{n}})^2 V_\Gamma(y, \sqrt{n}),
\end{align*}\
where we have used the relationship between $V_h$ and $V_\Gamma$ and that both of these are doubling. 

Hence 
\begin{align*}
\psi_K(x) \approx \sum_{y \in \partial_I \Gamma} \sum_{n \geq d_\Gamma^2(x,y)} \frac{h(x)}{h(x_{\sqrt{n}})} \frac{h(y)}{h(y_{\sqrt{n}})} \frac{\pi(y)}{V_\Gamma(x, \sqrt{n})}.
\end{align*}

Now interchange the order of summation. Noting the set $\{y \in \partial_I \Gamma: d_\Gamma^2(x,y) \leq n \}$ is nonempty if and only if $n \geq d_\Gamma^2(x,y_x),$
\[ \sum_{y \in \partial_I \Gamma} \sum_{n \geq d_\Gamma^2(x,y)} \longleftrightarrow \sum_{n \geq d^2_\Gamma(x,y_x)} \sum_{\substack{y \in \partial_I \Gamma:  \\ d_\Gamma^2(x,y) \leq n}} .\]

Thus
\begin{align*} 
\psi_K(x) \approx \sum_{n \geq d^2_\Gamma(x,y_x)} \sum_{\substack{y \in \partial_I \Gamma:  \\ d_\Gamma^2(x,y) \leq n}}\frac{h(x)}{h(x_{\sqrt{n}})} \frac{h(y)}{h(y_{\sqrt{n}})} \frac{\pi(y)}{V_\Gamma(x, \sqrt{n})}
\end{align*} 

The result for the uniformly $S$-transient case follows from the above and Lemma \ref{his1}. 
\end{proof} 

\begin{remark}
Recall from Remark \ref{Carleson_est} that the Carelson estimate $h(z) \leq A h(x_r)$ holds for all $r>0, \ z \in B_\Gamma(x,r)$. Therefore the terms $h(x)/h(x_{\sqrt{n}})$ and $h(y)/h(y_{\sqrt{n}})$ are bounded and, essentially,  add additional decay to the sum. Thus Theorem \ref{IU-psibd} has additional decay that is not present in Theorem \ref{psi-upperbd}. However, if $h \approx 1, \ d_\Gamma \approx d_{\widehat{\Gamma}},$ and the boundary is doubling, then these bounds are the same. 

In the $S$-recurrent case, $\psi_K \equiv 1,$ so the two-sided bound above yields constants; see Example \ref{Z3_line} below.
\end{remark} 

\begin{theorem}\label{almost_converse}
Suppose that $\Gamma := \widehat{\Gamma} \setminus K$ is an inner uniform subgraph of the Harnack graph $(\widehat{\Gamma},\pi,\mu)$.

Let $B_{\partial_I}(x,r) := B_\Gamma(x, r) \cap \partial_I \Gamma$ denote the trace of $\Gamma$-balls in $\partial_I \Gamma$ and $V_{\partial_I}(x,r) = \pi(B_{\partial_I}(x,r))$ for any $x \in \Gamma.$ Define 
\[ W_{\partial_I}(x,r) := \frac{V_\Gamma(x,r)}{V_{\partial_I}(x,r)} \quad \forall x \in \Gamma.\]

 Then:
\begin{enumerate}[(1)]
\item If $\Gamma$ is uniformly $S$-transient, there exists some $L', \, \varepsilon' >0$ such that 
\[ d(x, K) \geq L' \implies  \sum_{n \geq d_\Gamma^2(x, \partial_I \Gamma)} \frac{1}{W_{\partial_I}(x,\sqrt{n})} \leq \varepsilon '.\]

\item If for any $\varepsilon >0,$ there exists $L_\varepsilon >0$ such that 
\[ d(x, K) \geq L_\varepsilon \implies \sum_{n \geq d_\Gamma^2(x, \partial_I \Gamma)} \frac{1}{W_{\partial_I}(x, \sqrt{n})} \leq \varepsilon,\]
then $\Gamma$ is uniformly $S$-transient. 

\end{enumerate}
\end{theorem}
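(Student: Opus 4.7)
The plan is to rewrite Theorem \ref{IU-psibd} in terms of $W_{\partial_I}$ and then pair the two directions of the resulting estimate against the uniform $S$-transience assumption (for part (1)) and the hypothesis on the tail sum (for part (2)). The key rewriting is the identity
$$\sum_{\substack{y \in \partial_I \Gamma:\\ d_\Gamma^2(x,y) \le n}} \pi(y) \;=\; V_{\partial_I}(x,\sqrt{n}),$$
which converts (\ref{2sided_trans}) into $\psi_K(x) \approx \sum_{n \ge d_\Gamma^2(x,\partial_I \Gamma)} W_{\partial_I}(x,\sqrt{n})^{-1}$ whenever $\Gamma$ is uniformly $S$-transient.

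For part (1), uniform $S$-transience supplies $L,\varepsilon$ with $\psi_K(x) \le 1-\varepsilon$ whenever $d(x,K) \ge L$. Combined with the lower-bound half of (\ref{2sided_trans}), this yields
$$\sum_{n \ge d_\Gamma^2(x,\partial_I \Gamma)} \frac{1}{W_{\partial_I}(x,\sqrt{n})} \;\le\; c^{-1}\psi_K(x) \;\le\; c^{-1}(1-\varepsilon)$$
on the same set, so we take $L' := L$ and $\varepsilon' := c^{-1}(1-\varepsilon)$, where $c$ is the lower implicit constant in (\ref{2sided_trans}).

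For part (2), uniform $S$-transience is the conclusion we are after, so (\ref{2sided_trans}) is not available a priori and we must use the general harmonic-profile form of Theorem \ref{IU-psibd}. Its terms carry factors $h(x)/h(x_{\sqrt{n}})$ and $h(y)/h(y_{\sqrt{n}})$, but the remark following Lemma \ref{compute_Vh} records a universal constant $A$ with $h(z) \le A\, h(x_r)$ whenever $z \in B_\Gamma(x,r)$. Applied with $z=x,\ r=\sqrt{n}$ and, for each $y$ in the inner sum (so that trivially $y \in B_\Gamma(y,\sqrt{n})$), with $z=y,\ r=\sqrt{n}$, both $h$-ratios are bounded by $A$. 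The upper-bound half of Theorem \ref{IU-psibd} therefore reduces to
$$\psi_K(x) \;\le\; C \sum_{n \ge d_\Gamma^2(x,\partial_I \Gamma)} \frac{1}{W_{\partial_I}(x,\sqrt{n})}.$$
Invoking the hypothesis with $\varepsilon_0 := 1/(2C)$ produces some $L_{\varepsilon_0}$ such that $\psi_K(x) \le 1/2$ whenever $d(x,K) \ge L_{\varepsilon_0}$, which is exactly uniform $S$-transience.

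The main obstacle is the step in part (2) that dispenses with the harmonic-profile factors without first knowing $h \approx 1$ via Lemma \ref{his1}; this is what the $x_r$ construction of Lemma \ref{compute_Vh} together with its accompanying remark are designed to do, furnishing the universal bound $h(z) \le A\, h(x_r)$ on $B_\Gamma(x,r)$ from the inner uniform geometry and the Harnack property of the $h$-transform space (Theorem \ref{ph_Harnack}). Once that bound is in hand, part (2) is essentially a direct reading of the upper half of Theorem \ref{IU-psibd}, and part (1) is just the matching lower half of the uniformly $S$-transient case.
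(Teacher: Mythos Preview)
Your proposal is correct and follows essentially the same argument as the paper: part~(1) uses the lower bound of the uniformly $S$-transient case (\ref{2sided_trans}) together with the assumed bound $\psi_K(x)\le 1-\varepsilon$, and part~(2) uses the upper bound of the general form of Theorem~\ref{IU-psibd} after bounding the $h$-ratios by a universal constant via the $x_r$ property in the remark following Lemma~\ref{compute_Vh}. If anything, your treatment of part~(2) is slightly more careful than the paper's, which writes $h(x)/h(x_{\sqrt{n}})\le 1$ where strictly speaking only $\le A$ is guaranteed by that remark; this makes no difference to the argument since $A$ is absorbed into $C$.
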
 

\begin{proof}
($1$):  Suppose that $\Gamma$ is uniformly $S$-transient, so there exist $\varepsilon, L >0$ such that $\psi_K(x) \leq 1-\varepsilon$ whenever $d(x, K) \geq L.$ By Lemma \ref{his1}, we have $h \approx 1.$ Using the result of Theorem \ref{IU-psibd}, 
\begin{align*}
\sum_{n \geq d_\Gamma^2(x, \partial_I \Gamma)} \frac{1}{W_{\partial_I}(x, \sqrt{n})} &= \sum_{n \geq d_{\Gamma}^2(x,\partial_I \Gamma)} \sum_{\substack{y \in \partial_I \Gamma: \\ d_{\Gamma}^2(x,y) \leq n}} \frac{\pi(y)}{V_\Gamma(x, \sqrt{n})} 
\leq C \psi_K(x) \leq C(1- \varepsilon)
\end{align*}
whenever $d(x,K) \geq L$. Setting $\varepsilon' = C(1-\varepsilon)$ gives the result.

($2$): Set $d_{x,I} := d(x, \partial_I \Gamma).$ Now suppose that for any $\varepsilon >0,$ there exists $L_\varepsilon >0$ such that $\sum_{n \geq d_{x,I}^2} (W_{\partial_I}(x, \sqrt{n}))^{-1} < \varepsilon$ whenever $d(x,K) \geq L_\varepsilon.$ Using Theorem \ref{IU-psibd} and the fact that $h(x)/h(x_{\sqrt{n}}) \leq 1$ for all $x \in \Gamma, \sqrt{n} \geq 1,$
\begin{align*}
\psi_K(x) &\leq C \sum_{n \geq d_{\Gamma}^2(x,\partial_I \Gamma)} \sum_{\substack{y \in \partial_I \Gamma: \\ d_{\Gamma}^2(x,y) \leq n}} \frac{h(x)}{h(x_{\sqrt{n}})} \frac{h(y)}{h(y_{\sqrt{n}})} \frac{\pi(y)}{V_\Gamma(x, \sqrt{n})}  \\
&\leq C \sum_{n \geq d_{\Gamma}^2(x,\partial_I \Gamma)} \sum_{\substack{y \in \partial_I \Gamma: \\ d_{\Gamma}^2(x,y) \leq n}} \frac{\pi(y)}{V_\Gamma(x, \sqrt{n})} \\
&= \sum_{n \geq d_\Gamma^2(x, \partial_I \Gamma)} \frac{C}{W_{\partial_I}(x, \sqrt{n})} \leq C \varepsilon.
\end{align*}
Taking $\varepsilon$ sufficiently small, there exists $\varepsilon', \, L'>0$ such that $\psi_K(x) \leq 1 - \varepsilon'$ whenever $d(x,K) \geq L',$ which is precisely the definition of $\Gamma$ being uniformly $S$-transient. 
\end{proof}

\begin{remark}
Theorem \ref{almost_converse} relies upon the lower bound of Theorem \ref{IU-psibd} in (1) and the upper bound in (2). An analogous statement of (2) could be obtained in the setting of Theorem \ref{psi-upperbd} using the function $W$ as opposed to the function $W_{\partial_I}.$ 
\end{remark}

\subsection{Two-sided bounds on hitting probabilities accounting for time or vertex hit} 

When $\Gamma$ is an inner uniform subgraph of a Harnack graph $\widehat{\Gamma},$ Theorem \ref{IU-psibd} gives matching upper and lower bounds on the probability of leaving $\Gamma$ (i.e. the probability of hitting $\Gamma^c$).  Other questions of natural interest include the likelihood of exiting $\Gamma$ at a particular point $v \in \partial \Gamma,$ or the chance of exiting $\Gamma$ (in general, or at a particular point) at or before time $n.$ While in the recurrent case $\psi_K(x) \equiv 1,$ this is not the case for the probabilities in the previous sentence, and these questions remain interesting. Bounds on these probabilities can be given using the same ideas and reasoning we have already seen. We collect these results below as corollaries. In particular, Corollaries \ref{hitprob_deriv} and \ref{hitv_beforen} can be seen as discrete versions of the results of \cite{ag_lsc_hittingprob}. 

\begin{definition}[Various Hitting Probabilities]
Given a graph $(\widehat{\Gamma}, \pi, \mu)$ with subgraph $\Gamma = \widehat{\Gamma} \setminus K,$ recall $\tau_K$ denotes the first hitting time of $K$/first exist time of $\Gamma$. Define the following hitting probabilities, where $x \in \Gamma, v \in \partial \Gamma,$ and $n\geq d_\Gamma(x,v):$ 
\begin{itemize}
\item $\psi_K(x) = \bP^x(\tau_K  < + \infty),$ the chance of hitting $K,$ given the walk starts at $x$ 
\item $\psi_K(x,v) = \bP^x (X_{\tau_K} = v, \, \tau_K < + \infty),$ the chance of hitting $K$ for the first time at the point $v,$ given the walk starts at $x$
\item $\psi_K(n,x,v) = \bP^x (X_{\tau_K} =v, \, \tau_K \leq n),$ the chance of hitting $K$ for the first time at the point $v \in \partial \Gamma,$ and doing so in time less than or equal to $n$ 
\item $\psi_K'(m,x,v) = \psi_K(m, x,v) - \psi_K(m-1,x,v) = \bP^x (X_{\tau_K} = v,\, \tau_K =m),$ the chance of hitting $K$ for the first time at $v$ at the time $m$
\item $\psi_K(n,x) = \bP^x (\tau_K \leq n)$ the chance of hitting $K$ at time less than or equal to $n$
\item $\psi_K'(m,x) = \psi_K(m,x) - \psi_K(m-1, x) = \bP^x(\tau_K = m),$ the chance of hitting $K$ for the first time at time $m$.
\end{itemize}
\end{definition}

There are various relationships between these quantities, for example
\begin{align*}
&\psi_K(n,x,v) = \sum_{m=0}^n \psi_K'(m,x,v) \\
&\psi_K(n,x) = \sum_{m=0}^n \psi_K'(m,x) = \sum_{v \in \partial \Gamma} \sum_{m=d(x,v)}^n \psi_K'(m,x,v) = \sum_{v \in \partial \Gamma} \psi_K(n,x,v).
\end{align*}

Our theorems above dealt with $\psi_K(x);$ the corollaries below provide estimates for some of these other quantities. These corollaries use the symbol $\approx$ from Definitions \ref{approx_def} and \ref{approx_def2} where constants are allowed both inside and outside exponentials. These constants depend on the constants appearing in the definitions of controlled weights, uniformly lazy, inner uniform, and Harnack graphs, and, in the case of uniform $S$-transience, on $L, \varepsilon.$ 

\begin{corollary}[Estimate on Hitting at a Point $v$]\label{hitv_notime}
Assume $\Gamma := \widehat{\Gamma} \setminus K$ is an inner uniform subgraph of a Harnack graph $(\widehat{\Gamma}, \pi, \mu).$ Then 
\begin{align}
\psi_K(x,v) \approx \sum_{\substack{y \in \Gamma: \\ y \sim v}} h(x)h(y) \pi(y) \sum_{n \geq d_\Gamma^2(x,y)} \frac{1}{V_h(x, \sqrt{n})} \quad \forall x \in \Gamma \setminus \partial_I \Gamma,\ v \in \partial \Gamma.
\end{align}

In the event that $\Gamma$ is uniformly $S$-transient, then
\begin{align}\label{hitK_atv}
\psi_K(x,v) \approx \sum_{n \geq d_\Gamma^2(x,v)} \frac{\pi(v)}{V_\Gamma(x,\sqrt{n})}.
\end{align}
\end{corollary}

\begin{proof}
Reasoning as in Theorem \ref{IU-psibd}, but without summing over \emph{all} points of the boundary of $\Gamma$ yields
\begin{align*}
\psi_K(x,v) &\approx \sum_{n \geq 2} \sum_{\substack{y \in \Gamma: \\ y \sim v}} \frac{h(x)h(y) \pi(y)}{V_h(x, \sqrt{n})} \exp\Big(-\frac{d_\Gamma^2(x,y)}{n}\Big)  \\
&\approx \sum_{\substack{y \in \Gamma: \\ y \sim v}} h(x)h(y) \pi(y) \sum_{n \geq d_\Gamma^2(x,y)} \frac{1}{V_h(x, \sqrt{n})}.
\end{align*}

If $\Gamma$ is uniformly transient, the result follows as $h\approx 1$ by Lemma \ref{his1}.
\end{proof}

\begin{remark}
In (\ref{hitK_atv}), a sum over the neighbors of $v$ that belong to $\partial_I \Gamma$ appears. For any $x,\, v$ there is always a point $y_{x,v} \in \partial_I \Gamma$ such that $d_\Gamma(x, y_{x,v}) +1 = d_\Gamma(x,v),$ but there may be multiple points that achieve this or other neighbors of $v$ that are further away from $x$ in $\Gamma.$ In the lower bound, we may keep only the point $y_{x,v},$ but, in the upper bound, we do not know a relationship that would allow us to replace a generic $h(y)$ by $h(y_{x,v}).$ If $h\approx 1,$ or if we know all $y\sim v$ are close in $\Gamma$ (not just in $\widehat{\Gamma}),$ this is not a problem and only one $y_{x,v}$ counts. However, if $v$ can be approached from multiple ``sides," this is not the case, and in fact $h$ may be very different on the different sides. (Consider a slit domain or two sides of a boundary with a ``corner.'') 

We are, however, always free to replace $\pi(y)$ by $\pi(v)$ due to the assumption of controlled weights.
\end{remark}

\begin{corollary}[Hitting at time $m$ at $v,$ rates of convergence]\label{hitprob_deriv}
Assume $\Gamma := \widehat{\Gamma} \setminus K$ is an inner uniform subgraph of a Harnack graph $(\widehat{\Gamma}, \pi, \mu).$ Then
\begin{align}\label{deriv_bd}
 \psi'_K(m,x,v) \approx \sum_{y \in \Gamma: y \sim v} \frac{h(x)h(y) \pi(y) }{V_h(x, \sqrt{m})} \exp\Big(-\frac{d_\Gamma^2(x,y)}{m}\Big) \quad \forall x \in \Gamma \setminus \partial_I \Gamma, v \in \partial \Gamma, m \geq d_\Gamma(x,v)
\end{align}
and for $n \geq d_\Gamma^2(x,v),$ 
\begin{align}\label{conv_est}
\psi_K(x,v) - \psi_K(n,x,v) \approx \sum_{y \in \Gamma: y \sim v} h(x)h(y)\pi(y) \sum_{m=n}^\infty \frac{1}{V_h(x, \sqrt{m})}
\end{align}

If $\Gamma$ is uniformly $S$-transient, then 
\begin{align} \psi'_K(m,x,v) \approx \frac{\pi(v)}{V_\Gamma(x,\sqrt{m})} \exp \Big(-\frac{d_\Gamma^2(x,v)}{m}\Big),\end{align}
and when $n \geq d_\Gamma^2(x,v),$ 
\begin{align}
\psi_K(x,v) - \psi_K(n,x,v) \approx \pi(v) \sum_{m=n}^\infty \frac{1}{V_\Gamma(x, \sqrt{m})}. 
\end{align}

\end{corollary}

\begin{proof}
Proceeding as in the proof of Theorem \ref{IU-psibd}, but summing in neither time nor space, for $x \in \Gamma \setminus \partial_I \Gamma, v \in \partial \Gamma,$ 
\begin{align*}
\psi'_K(m,x,v) &= \bP^x(X_{\tau_K}=v, \tau_K = m) = \sum_{y \in \Gamma: y \sim v} \mathcal{K}_{\Gamma,D}^{m-1}(x,y) \mathcal{K}_\Gamma(y, v) \\&\approx \sum_{y \in \Gamma: y \sim v} \frac{h(x) h(y) \pi(y_v)}{V_h(x,\sqrt{m})} \exp\Big(-\frac{d_\Gamma^2(x,y)}{m}\Big).
\end{align*}

Note points $y$ in the sum above will only appear if $m \geq d_\Gamma(x,y)$ (there is always at least one such $y$ since $m \geq d_\Gamma(x,v)$ by assumption). 

To obtain (\ref{conv_est}), use (\ref{deriv_bd}) to find
\begin{align*}
\psi_K(x,v) - \psi_K(n,x,v) &= \sum_{m=n}^\infty \psi_K'(m,x,v) \\&\approx \sum_{y \in \Gamma: y \sim v} h(x)h(y) \pi(y) \sum_{m=n}^\infty \frac{1}{V_h(x, \sqrt{m})} \exp\Big(-\frac{d_\Gamma^2(x,y)}{m}\Big).
\end{align*}
When $n \geq d_\Gamma^2(x,y),$ the exponential does not count. 

In the uniformly $S$-transient case, we know $h\approx 1.$ For the lower bounds, discard any inconvenient terms; for the upper bounds, recall the number of neighbors $y$ of $v$ is bounded above and all such neighbors satisfy $d_\Gamma(x,y) +1 \geq d_\Gamma(x,v) \geq 2.$ 
\end{proof} 

\begin{corollary}[Hitting at $v$ by time $n$]\label{hitv_beforen} 
Assume $\Gamma := \widehat{\Gamma} \setminus K$ is an inner uniform subgraph of a Harnack graph $(\widehat{\Gamma}, \pi, \mu).$ For all $x \in \Gamma \setminus \partial_I \Gamma, v \in \partial \Gamma, n \geq d_\Gamma(x,v),$
\begin{align}\label{hith_nxv}
\psi_K(n,x,v) \approx \sum_{y \in \Gamma: y \sim v} \bigg[ \frac{h(x)h(y) \pi(y) d_\Gamma^2(x,y)}{V_h(x, d_\Gamma(x,y))} \exp\Big(-\frac{d_\Gamma^2(x,y)}{n}\Big) + \sum_{m=d_\Gamma(x,y)^2}^n \frac{h(x)h(y)\pi(y)}{V_h(x, \sqrt{m})}\bigg]. 
\end{align}

In the uniformly $S$-transient case,
\begin{align}\label{hitprob_bd}
\psi_{K}(n,x,v)  &\approx \frac{\pi(v) d_\Gamma^2(x,v)}{V_\Gamma(x, d_\Gamma(x,v))} \exp\Big(-\frac{d_\Gamma^2(x,v)}{n}\Big) + \sum_{m=d_\Gamma(x,v)^2}^n \frac{\pi(v)}{V_\Gamma(x, \sqrt{m})}.
\end{align} 

\end{corollary}

\begin{proof}
This quantity is like that of Theorem \ref{IU-psibd}, except that the sum in time stops at a value $n$ instead of continuing to infinity. We are forced to consider several cases about the relationship between the size of $n$ and $d_\Gamma(x,v).$ As before, the uniformly $S$-transient case will follow by recalling $h\approx 1$ and that only one $y \sim v$ counts.  

In all cases, using Corollary \ref{hitprob_deriv}, 
\begin{align*}
\psi_K(n,x,v) &= \sum_{m=0}^n \psi'_K(m,x,v) 
\approx \sum_{\substack{y \in \Gamma: \\ y \sim v}}  \, \sum_{m=d_\Gamma(x,y)}^n \frac{h(x)h(y)\pi(y)}{V_h(x, \sqrt{m})} \exp\Big(-\frac{d_\Gamma^2(x,y)}{m}\Big).
\end{align*}

We now compute the inner sum in time $m$ above. For simplicity, we will often abbreviate $d_\Gamma(x,y)$ by $d$ in the rest of the proof. 

\noindent \underline{Case 1:} Total time $n$ is small compared to distance, that is $d_\Gamma(x,y) \approx n;$ say $d_\Gamma(x,y) \leq n \leq 2 d_\Gamma(x,y).$ 

Then the inner sum is roughly
\begin{align*}
 h(x)h(y)\pi(y) \sum_{m=d}^n \frac{1}{V_h(x, \sqrt{d})} \exp\Big(-\frac{d^2}{d}\Big) \approx \frac{h(x)h(y)\pi(y)}{V_h(x, d)} \exp(-d). 
\end{align*}
In this situation the exponential is very small, so any powers of $d$ that appear by taking the sum or adjusting the radius of $V_h$ can be fed to the exponential by changing the constant. Recall $V_h$ is doubling (see Theorem \ref{ph_Harnack}). 

\noindent \underline{Case 2:} The intermediate case, $2 d_{\Gamma}(x, y) \leq n < d_{\Gamma}^2(x, y)$.

We use a dyadic decomposition and cut the sum into pieces where $d2^{-l-1} \leq \sqrt{m} \leq d 2^{-l}.$ Recall we use $\asymp$ to denote such decomposition. Let $a$ denote the integer such that $\sqrt{n} \asymp d2^{-a},$ or $a \asymp \log_2(d/\sqrt{n})$ and $b$ be the integer such that $\sqrt{d} \asymp d2^{-b}$ or $b \asymp \log_2(\sqrt{d}).$ Since $d/\sqrt{n} \leq \sqrt{d}$ in this case, we have $a \leq b.$ Hence using the same tools to compute the sum as above, where the constants $C,c$ can change from line to line,
\begin{align*}
\sum_{m=d_\Gamma(x,y)}^n \frac{Ch(x)h(y)\pi(y)}{V_h(x, \sqrt{m})}& \exp\Big(-\frac{d_\Gamma^2(x,y)}{cm}\Big) \\
&\leq C h(x) h(y)\pi(y) \sum_{l=a}^b \sum_{\sqrt{m} \asymp d2^{-l}} \frac{1}{V_h(x, d2^{-l})} \exp\Big(-\frac{4^l}{c}\Big) \\
&\leq C h(x) h(y) \pi(y) \sum_{l=a}^b \frac{d^2}{4^l} \frac{1}{V_h(x, d2^{-l})} \exp\Big(-\frac{4^l}{c}\Big) \\
&\leq \frac{C h(x)h(y)\pi(y) d^2}{V_h(x, d)} \sum_{l=a}^b \exp\Big(-\frac{4^l}{c}\Big)\\
&\leq \frac{C h(x)h(y)\pi(y) d^2}{V_h(x, d)} \exp\Big(-\frac{4^a}{c}\Big).
\end{align*}
The last line follows from bounding the sum from above by $\sum_{l \geq a},$ and recalling $4^a \approx d^2/n$. For the lower bound, repeat the same series of steps, except in the last line keep only the first term $l=a.$ 

We have found 
\begin{align*}
\sum_{m=d_\Gamma(x,y)}^n \frac{h(x)h(y)\pi(y)}{V_h(x, \sqrt{m})} \exp\Big(-\frac{d_\Gamma^2(x,y)}{m}\Big) \approx \frac{h(x)h(y) \pi(y) d^2}{V_h(x,d)} \exp\Big(-\frac{d^2}{n}\Big).
\end{align*}

\noindent \underline{Case 3:} The case where time is large compared to the distance squared, $n \geq d_\Gamma^2(x,y)$.

Cut the sum into two pieces: where $m < d^2$ and where $m\geq d^2.$ For the first piece, apply the previous case. Here the exponential is large, so we may always ignore it. We find
\begin{align*}
\sum_{m=d_\Gamma(x,y)}^n \frac{h(x)h(y)\pi(y)}{V_h(x, \sqrt{m})} \exp\Big(-\frac{d_\Gamma^2(x,y)}{m}\Big) \approx \frac{h(x)h(y) \pi(y) d^2}{V_h(x,d)} + \sum_{m=d^2}^n \frac{h(x)h(y)\pi(y)}{V_h(x, \sqrt{m})}.
\end{align*} 

\noindent To finish estimating $\psi_K(n,x,v),$ take the sum in points $y.$ Different points $y\sim v$ may fall into different cases above, but in all cases the expression found matches that of (\ref{hith_nxv}). 
\end{proof}

\begin{corollary}[Hitting $K$ at time $m$]\label{hit_atm}
Assume $\Gamma := \widehat{\Gamma} \setminus K$ is an inner uniform subgraph of a Harnack graph $(\widehat{\Gamma}, \pi, \mu).$ For all $x \in \Gamma \setminus \partial_I \Gamma$ and all $m \geq d_\Gamma(x,K),$ 
\begin{align}
\psi_K'(m,x) \approx \sum_{\substack{y \in \partial_I \Gamma \\ d_\Gamma(x,y) \leq m}} \frac{h(x) h(y) \pi(y)}{V_h(x, \sqrt{m})} \exp\Big(-\frac{-d_\Gamma^2(x,y)}{m}\Big).
\end{align}

If $\Gamma$ is uniformly $S$-transient, then 
\begin{align}
\psi_K'(m,x) \approx \sum_{\substack{y \in \partial_I \Gamma \\ d_\Gamma(x,y) \leq m}} \frac{\pi(y)}{V_\Gamma(x, \sqrt{m})} \exp\Big(-\frac{d_\Gamma^2(x,y)}{m}\Big). 
\end{align}
\end{corollary}

The corollary follows from similar arguments as above. There are no particularly nice simplifications for any of these expressions since the sums in space rely on $h, \pi,$ and $d_\Gamma(x,y).$ 

Note either of the previous two corollaries could be used to get an estimate on $\psi_K(n,x).$ 

\subsection{Examples}

In this section we apply the results of previous sections to various examples.  

Recall we have already seen that $\Z^m \setminus \Z^k$ is uniformly $S$-transient when $k \leq m -3.$ Further, it is not too difficult to verify that $\Z^m \setminus \Z^k$ is uniform if and only if $k \leq m-2,$ so that the results of the previous section also apply. This example generalizes as follows. 

\begin{example}[Examples with regular volume growth]

Let $\Gamma := \widehat{\Gamma} \setminus K$ be inner uniform inside the Harnack graph $(\widehat{\Gamma}, \pi, \mu).$ Assume there exists $\alpha>0$ such that $V_\Gamma(x,r) \approx r^\alpha$ for all $x \in \Gamma, r>0.$ Further assume that $V_{\partial_I \Gamma}$ is doubling in the sense of Definition \ref{bdry_double} and that $\partial_I \Gamma$ is regular in the sense that there exists $\beta >0$ such that $V_{\partial_I \Gamma}(y,r) \approx r^\beta$ for all $y \in \partial_I \Gamma, r>0.$ Assume $\alpha - \beta > 2.$ 

Then we may use Corollary \ref{simpler} to justify that $\Gamma$ is uniformly $S$-transient, in which case $h \approx 1$ and (\ref{2sided_trans}) gives us a two-sided bound on $\psi_K$ as a function of $x$:
\begin{align*}
\psi_K(x) \approx \sum_{n \geq d_\Gamma^2(x, \partial_I \Gamma)} \frac{V_{\partial_I \Gamma}(y_x, \sqrt{n})}{V_\Gamma(x,\sqrt{n})} \approx \sum_{n \geq d_\Gamma^2(x, \partial_I \Gamma)} \frac{1}{n^{(\alpha-\beta)/2} } \approx \frac{1}{d_\Gamma(x, \partial_I \Gamma)^{\alpha-\beta-2}}. 
\end{align*} 
\end{example}

\begin{example}[Half-space, $\Z^m \setminus \Z^{m-1}$]\label{halfspace_main}
Consider upper half-space $\Gamma = \Z^m_+ = \{ (x_1, \dots, \allowbreak x_m) \in \Z^d: x_m >0\}$ inside of $\Z^m$ with the lazy simple random walk. Let $\vec{x}=(x_1, \dots, x_m) \in \Gamma.$ We consider the chance a walk hits $\vec{v} = (v_1, \dots, v_{m-1}, 0)$ from $\vec{x}.$ Clearly $\Gamma$ is inner uniform in $\Z^m,$ which is Harnack. In this case, $h(\vec{x}) = x_m.$ Let $\vec{y}_v := (v_1, \dots, v_{m-1},1).$ 

Let $x = (x_1, \dots, x_{m-1}), \ v = (v_1, \dots, v_{m-1})$ and $d(x,v) = |x_1 - v_1| + \cdots + |x_{m-1} - v_{m-1}|.$ Applying various corollaries from the previous section (and assuming $n \geq d_\Gamma^2(\vec{x}, \vec{v})$ where sensible), 
\begin{align*}
& \psi_K(\vec{x},\vec{v}) \approx \frac{x_m}{[d(\vec{x}, \vec{y}_v)]^m} = \frac{x_m}{(d(x,v) +|x_m -1|)^m} \\[1ex]
&\psi_K'(n, \vec{x}, \vec{v}) \approx \frac{x_m}{(x_m + \sqrt{n})^2 \, n^{m/2}} \exp\Big(-\frac{([d(x,v)]^2 + |x_m|^2)}{n}\Big)\\[1ex]
& \psi_K(\vec{x}, \vec{v}) - \psi_K(n, \vec{x}, \vec{v}) \approx \frac{x_m}{n^{m/2}} \\[1ex]
& \psi_K(n, \vec{x}, \vec{v}) \approx \frac{x_m}{[d_\Gamma(\vec{x}, \vec{v})]^m} \exp\Big(-\frac{d_\Gamma^2(\vec{x}, \vec{v})}{n}\Big) + x_m \Big[ \frac{1}{[d_\Gamma(\vec{x}, \vec{v})]^m} - \frac{1}{n^{m/2}}\Big] \\[1ex]
&\psi'_K(n,\vec{x}) \approx \frac{x_m}{n^{3/2}}\\[1ex]
&\psi_K(\vec{x}) - \psi_K(n, \vec{x}) \approx \frac{x_m}{n^{1/2}} \\[1ex]
&\psi_K(n,\vec{x}) \approx x_m \bigg[ \frac{1}{x_m^{1/2}} - \frac{1}{n^{1/2}}\bigg]. 
\end{align*}

The above estimate for $\psi_K(\vec{x}, \vec{v})$ is essentially a (multivariate) Cauchy distribution as expected. This is clearer to see if we take $m=2, \, \vec{x}=(0,2),$ and $\vec{v} = (v,0)$ so that $\psi_K(\vec{x},\vec{v}) \approx \frac{1}{(1+|v|)^2} \approx \frac{1}{1+v^2}.$ Further, notice that the rate of convergence of $\psi_K(n, \vec{x}, \vec{v})$ in time to $\psi_K(\vec{x}, \vec{v})$ is dependent on the dimension, but convergence of $\psi_K(n, \vec{x})$ to $\psi_K(\vec{x}) \equiv 1$ has the same rate in all dimensions. 

\end{example}

\begin{example}[Cones in $\Z^2$] Let $\widehat{\Gamma} = \Z^2$ and $\Gamma$ be the lattice points lying inside of a cone of aperture $\alpha \in (0,2\pi)$ with vertex at $(0,0)$ and one side of the cone lying along the $x$-axis. Note this is a case where $K \not = \partial \Gamma.$ As the cone is inner uniform, the results of the previous section apply.

In the continuous case, it is known that the profile of such a cone is $h(r, \theta) = r^{\pi/\alpha} \sin\big(\frac{\pi}{\alpha} (\theta)\big),$ where $(r, \theta) \in \R^2$ are polar coordinates. Since a cone can be thought of as the graph above a Lipschitz domain, a result of Varopoulos \cite{Varo_LipCLT} says harmonic functions in the discrete (lattice) and continuous versions of a space are similar away from the boundary. For further discussion of harmonic functions in cones see \cite{DW_altharmcone} and references therein. 

Therefore, assuming $\vec{x} \in \Z^2$ is away from the boundary of our discrete cone and $\vec{v} \in \Z^2$ lies along the boundary of the cone, by Corollary \ref{hitv_notime},
\begin{align*}
\psi_K(\vec{x}, \vec{v}) \approx \frac{|\vec{x}|^{\pi/\alpha} \sin\big(\frac{\pi}{\alpha}(\theta_{\vec{x}})\big) |\vec{y}_{\vec{v}}|^{\pi/\alpha}\sin\big(\frac{\pi}{\alpha}(\theta_{\vec{y}_{\vec{v}}})\big)}{[d_\Gamma(\vec{x}, \vec{y}_{\vec{v}})]^{2\pi/\alpha}},
\end{align*}
where $\vec{y}_{\vec{v}} \sim \vec{v}$ and belongs to $\Gamma.$ One can verify this result matches that of the half-plane in the previous example ($\alpha = \pi, m =2$). 

We can also express $\psi_K(\vec{x}, \vec{v})$ in terms of distances to the edges of the cone. Let the edge of the cone that lies along the $x$-axis be $L_0$ and the other edge be $L_1.$ Then $|\vec{x}| \approx d(\vec{x}, L_0) + d(\vec{x}, L_1)$ and, for $\alpha$ fixed, $\sin\big(\frac{\pi}{\alpha}(\theta_{\vec{x}})\big) \approx \frac{d(\vec{x}, L_0)}{|\vec{x}|}\frac{d(\vec{x}, L_1)}{|\vec{x}|}.$ (Note one of these factors is always roughly constant.) Thus
\begin{align*}
\psi_K(\vec{x}, \vec{v}) \approx \frac{[d(\vec{x}, L_0) + d(\vec{x}, L_1)]^{\frac{\pi}{\alpha} - 2} \,  [d(\vec{v}, L_0)+d(\vec{v}, L_1)]^{\frac{\pi}{\alpha}-1} \, d(x, L_0), d(x, L_1)}{[d_\Gamma(\vec{x}, \vec{y}_{\vec{v}})]^{2\pi/\alpha}}.
\end{align*}

\end{example}

\begin{example}[A line in $\Z^3$, e.g. $\Z^m \setminus \Z^{m-2}$]\label{Z3_line} Consider $\widehat{\Gamma} = \Z^3$ and $K=\{(0,0,x_3): x_3 \in \Z\},$ the $x_3$-axis. The arguments below apply more generally to $\Z^m \setminus \Z^{m-2}.$ The harmonic profile is the same as the harmonic profile of a single point in $\Z^2,$ and consequently $h(x_1,x_2,x_3) \approx \log (|x_1| +|x_2|+1)$ (see e.g. \cite[Section 11]{Spitzer}). Given $\vec{x} = (x_1, x_2, x_3) \in \Gamma := \Z^3 \setminus K,$ then $d_{\vec{x}} := d(\vec{x}, \partial_I \Gamma) = |x_1| + |x_2|-1.$ We can use Theorem \ref{IU-psibd} to check that $\psi_K(\vec{x}) \approx 1$:
\begin{align*}
\psi_K(\vec{x}) &\approx \sum_{n \geq d_{\vec{x}}^2} \sum_{\substack{\vec{y} \in \partial_I \Gamma, \\ d_\Gamma^2 (\vec{x},\vec{y}) \leq n}} \frac{h(\vec{x})h(\vec{y})}{h(\vec{x}_{\sqrt{n}}) h(\vec{y}_{\sqrt{n}})} \frac{\pi(\vec{y})}{V_\Gamma(\vec{x}, \sqrt{n})} \\
&\approx \sum_{n \geq d_{\vec{x}}^2} \ \sum_{\vec{y}: d_\Gamma^2(\vec{x},\vec{y})\leq n} \frac{\log(d_{\vec{x}})}{\log(d_{\vec{x}} + \sqrt{n}) \log(\sqrt{n}) n^{3/2}} \\
&\approx \sum_{n \geq d_{\vec{x}}^2}  \frac{\log(d_{\vec{x}})}{(\log(n))^2 n} \approx \frac{\log(d_{\vec{x}})}{\log(d_{\vec{x}})} = 1.
\end{align*}

The above calculation used that he number of $\vec{y}\,$'s in the $x_3$-axis at distance less than $\sqrt{n}$ from $\vec{x}$ is about $\sqrt{n} - d_{\vec{x}} \approx \sqrt{n}.$ This is sensible if we replace the exterior sum $n\geq d_{\vec{x}}^2$ by $n \geq c d_{\vec{x}}^2$; for the lower bound, we can throw this away, and, in the upper bound, the sum over $d_{\vec{x}}^2 \leq n \leq c d_{\vec{x}}^2$ is can be controlled by later pieces of the sum. This might seem simple, but the fact that we can make manipulations like this in our calculations relies on the fact the boundary is doubling. (See Remark \ref{sums} below.) 

It is more interesting to compute $\psi_K(\vec{x}, \vec{v})$ where $\vec{v}=(0,0,v) \in K$. Then via Corollary \ref{hitv_notime}:
\begin{align*}
\psi_K(\vec{x}, \vec{v}) &\approx \sum_{\vec{y} \in \Gamma: \vec{y} \sim \vec{v}} h(\vec{x})h(\vec{y}) \pi(w) \sum_{n \geq d_\Gamma^2(\vec{x},\vec{y})} \frac{1}{V_h(\vec{x}, \sqrt{n})}  
\approx \log(d_{\vec{x}}) \sum_{n \geq d_\Gamma^2(\vec{x},\vec{y})} \frac{1}{(\log(n))^2 n^{3/2}} \\
&\approx \frac{\log(d(\vec{x}, \vec{v}_{\vec{x}}))}{(d_\Gamma(\vec{x}, \vec{v}_{\vec{x}}) + d_\Gamma(\vec{v}_{\vec{x}}, \vec{v})) (\log(d_\Gamma(\vec{x},\vec{v}_{\vec{x}}) + d_\Gamma(\vec{v}_{\vec{x}}, \vec{v}))^2}. 
\end{align*}
\end{example}

\begin{remark}\label{sums}
Given $x \in \Gamma, \ y \in \partial_I \Gamma,$ it is always true that $d_\Gamma(x,y) \approx d_\Gamma(x, y_x) + d_\Gamma(y_x, y),$ where $y_x \in \partial_I \Gamma$ achieves $d_\Gamma(x, \partial_I \Gamma)$ (and that $y_x \sim v_x \in \partial \Gamma$ that achieves $d(x, \partial \Gamma)).$  Provided changing the ``radius" by a constant does not really change how many points $y \in \partial_I \Gamma$ are at a particular distance from $y_x \in \partial_I \Gamma,$ then when $n$ is sufficiently large, the inner sums in our theorems/corollaries can be taken over $y \in \partial_I \Gamma: d^2(y, y_x) \leq n.$ This remark is similar in spirit to Corollary \ref{simpler}; Example \ref{fly} below gives an example where such assumptions do not hold. 
\end{remark}

\begin{example}[Weighted half-spaces]

This example is a continuation of Example \ref{halfspace_trans}. Once again we consider $\Gamma = \{ \vec{x} = (x_1, \dots, x_m) \in \Z^m : x_m >0\}$ inside  $\Z^m_{\geq 0}$ with weight $(1+x_m)^\alpha.$ Provided $\alpha > -m,$ then $\Z^m_{\geq 0 }$ with this weight is Harnack, which can be shown using similar arguments to those given in Section 4.3 of \cite{ag_lsc_harnackstability}. The profile for such a space clearly only depends on the $x_m$ coordinate and reduces to computing the profile on the weighted half-line. Using the definition of harmonic and choosing the scaling by setting $h(x_1, \dots, x_{m_1}, 1) =1,$ we can compute 
\[ h(x_1, \dots, x_m) = \begin{cases} \sum_{l=1}^{x_m} \frac{1}{l^\alpha}, & \alpha \geq0 \\[1ex] \sum_{n=2}^{x_m+1} \frac{2^\alpha}{n^\alpha}, & \alpha \in (-N,0) \end{cases} \quad \approx x_m^{1-\alpha}.\]

If $\alpha >1,$ then $1-\alpha <0$ and it is clear $h$ is uniformly bounded above and below. In Example \ref{halfspace_trans}, we already saw that $\Z^m_+$ was uniformly $S$-transient with such weights. Using Theorem \ref{IU-psibd} gives us a lower bound that matches the upper bound found in Example \ref{halfspace_trans}, and we can also find $\psi_K(\vec{x}, \vec{v})$:
\begin{align*}
 &\psi_K(\vec{x}) \approx \frac{1}{x_m^{\alpha -1}}\\
 &\psi_K(\vec{x}, \vec{v}) \approx \frac{1}{[d(\vec{x}, \vec{v})]^{m+\alpha-2}} \approx \frac{1}{[|x_1 - v_1| + \cdots + |x_{m-1} - v_{m-1}| + |x_m|]^{m+\alpha-2}}. 
\end{align*}

Now consider $\alpha \in (-N, 1].$ Using our Theorem \ref{IU-psibd}, we find that $\psi_K(\vec{x})$ is roughly constant and can compute $\psi_K(\vec{x}, \vec{v}):$ 
\begin{align*}
&\psi_K(\vec{x}) \approx \sum_{n \geq d^2(\vec{x}, \partial \Gamma)} \frac{x_m^{1-\alpha} n^{(m-1)/2}}{n^{1-\alpha} n^{(m+\alpha)/2}} \approx \sum_{n \geq x_m^2} \frac{x_m^{1-\alpha}}{n^{(3-\alpha)/2}} \approx \frac{x_m^{1-\alpha}}{x_m^{1-\alpha}} \approx 1\\
&\psi_K(\vec{x}, \vec{v}) \approx \frac{x_m^{1-\alpha}}{[d(\vec{x}, \vec{v})]^{m-\alpha}} = \frac{x_m^{1-\alpha}}{[|x_1-v_1| + \cdots + |x_{m-1}-v_{m-1}| + |x_m|]^{m-\alpha}}. 
\end{align*} 

Substituting $\alpha = 0$ into the expression for $\psi_K(\vec{x}, \vec{v})$ above, we recover the formula from Example \ref{halfspace_main}. 

In general knowing $\psi_K(\vec{x}) \approx 1$ is not sufficient to conclude a subgraph is $S$-recurrent, as this does not necessarily imply $\psi_K(\vec{x}) = 1.$ However, in this specific case, we can use symmetry to argue that the half-space cannot be $S$-recurrent and have $\psi_K(\vec{x})$ uniformly bounded below away from zero. Clearly, $\psi_K(\vec{x})$ only depends on $x_m = d(\vec{x}, \partial \Gamma).$ Also, by using repeated applications of the Markov property, if $x_m = d,$ then in order for the random walk to hit the set $\{x_m=0\},$ it must first hit the set $\{x_m = d-1\},$ then the set $\{x_m = d-2\},$ and so on, so the probability of hitting $\{x_m=0\}$ decomposes into a product of probabilities of hitting a set that is distance $1$ away from the starting point. Although the weights are different if we consider hitting $\{x_m=0\}$ from a point where $x_m=1$ in the usual half-space versus hitting $\{x_m = k\}$ from a point where $x_m = k+1$ in the half-space $\{x_m \geq k\},$ the weights will be uniformly comparable. Since $\psi_K$ is the chance of hitting $K$ before time $\infty,$ a bounded change of weights will not change it. Hence if $\psi_K(\vec{x}) <1$ everywhere, there must be points where $\psi_K$ is arbitrarily close to zero. Hence knowing $\psi_K(\vec{x}) \approx 1$ shows that these weighted half-spaces are in fact $S$-recurrent. 

\end{example}

\begin{example}[``Flyswatter'']\label{fly}
In $\Z^4,$ consider $K$ to be a two-dimensional infinite ``flyswatter" as in Figure \ref{flyswatter} below. A key point is that the flyswatter has long ``handles" and ``mesh parts" at every scale; this causes $K= \partial \Gamma$ to fail to be doubling in $\Z^4.$ However, $\Gamma = \Z^4 \setminus K$ is uniform as one can always use the extra two dimensions to move away from the flyswatter, and $d_\Gamma \approx d_{\Z^4}$ since the flyswatter is either thin or has frequent holes. While Theorem \ref{IU-psibd} and associated corollaries apply to this example, we do not know how to compute $h.$ This situation is typical. 

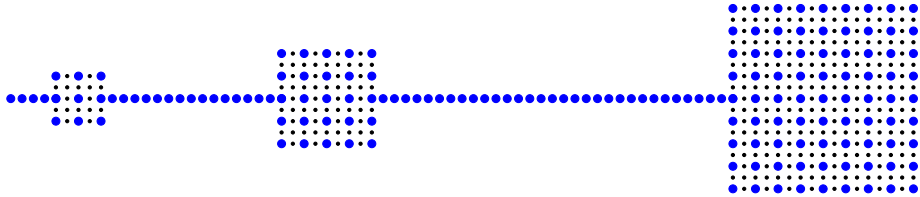
\begin{figure}[]
\begin{center}
\begin{tikzpicture}[scale=.3]
        \foreach \i in {12,12.5,13, 13.5,...,16}
        \foreach \j in {-2,-1.5,-1,...,2}
         \filldraw (\i, \j) circle (2pt); 
         \foreach \i in {32, 32.5,...,40}
          \foreach \j in {-4,-3.5,-3,...,4}
           \filldraw (\i, \j) circle (2pt); 
	\foreach \i in {0,0.5,1,1.5,2,4.5,5,5.5,6,6.5,7,7.5,8,8.5,9,9.5,10,10.5,11,11.5,12}
	 \filldraw[blue] (\i, 0) circle (5pt);
	 \foreach \i in {16.5,17,17.5,18,...,32}
	  \filldraw[blue] (\i, 0) circle (5pt);
	 \foreach \i in {2,3,4}
	  \foreach \j in {-1,0,1}
	   \filldraw[blue] (\i, \j) circle (5pt);
	   \foreach \i in {12,13,14,15,16} 
	    \foreach \j in {-2,-1,0,1,2}
	     \filldraw[blue] (\i, \j) circle (5pt); 
	     \foreach \i in {32,33,34,35,36,37,38,39,40}
	      \foreach \j in {-4,-3,-2,-1,0,1,2,3,4}
	      \filldraw[blue] (\i,\j) circle(5pt);
	  \foreach \i in {2.5,3.5}
	  \foreach \j in {-1,-0.5,0,0.5,1}
	   \filldraw (\i, \j) circle (2pt);
	   \foreach \i in {2,3,4}
	    \foreach \j in {-0.5, 0.5}
	     \filldraw(\i, \j) circle (2pt);
\end{tikzpicture}
\end{center}
\caption{The blue ``flyswatter," which we imagine continues infinitely in both directions in a similar manner. Although this picture is in two dimensions, we think of this in a higher dimensional space. Note how there are black points in-between the blue points, and it is easy to see distance in $\Z^d$ would not be changed significantly by avoiding the blue points when $d \geq 4.$}\label{flyswatter}
\end{figure}

\end{example} 

\subsection{Example: A set that is \texorpdfstring{$S$}{S}-transient but not uniformly so}\label{para_Z4}

In this section, we discuss an example that turns out to be $S$-transient, but not uniformly so, illustrating the distinction between these notions. We apply both Theorems \ref{psi-upperbd} and \ref{IU-psibd} and discuss what we can say about its harmonic profile $h$.

Let $\widehat{\Gamma} = \Z^4.$ Think of $\mathbf{x} \in \Z^4$ as $\mathbf{x}=(x_1, x_2, x_3, x_4).$ In the $x_1x_2$-plane, let $K = \partial \Gamma$ be the set of lattice points that lie inside the graph of $x_2 = \pm x_1^\alpha$ for $\alpha \in (0,1), \ x_1 \in \Z_{\geq 0}.$ In the case $\alpha = 1/2,$ we have a parabola whose axis of symmetry is the $x_1$-axis; we may often refer to the points of $K$ as a ``parabola" regardless of the value of $\alpha$ (or the fact that we are only considering a discrete analog of a parabola). Note that $K$ is a two-dimensional object in four-dimensional space, so $\Gamma := \Z^4 \setminus K$ is inner uniform.

If we consider the lazy simple random walk on $\Z^4,$ then it has controlled weights, is uniformly lazy, and is Harnack. Hence we can apply any of our results to this example. 

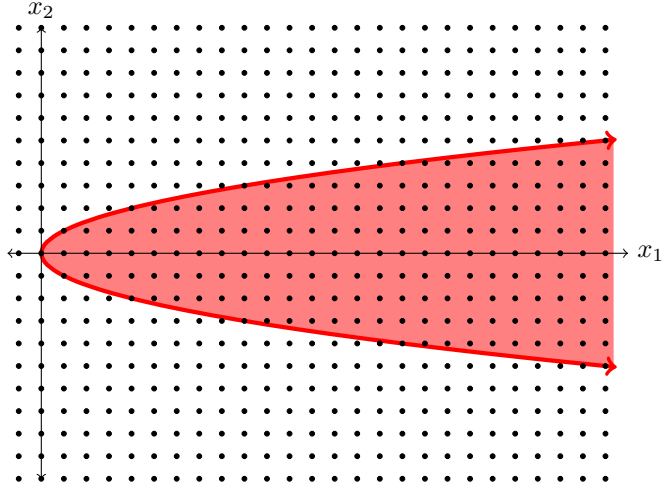
\begin{figure}[h]
\centering
\begin{tikzpicture}[scale=.3]
  \draw[fill=red!50, domain=-5.05:5.05, smooth, variable=\y, draw=red, <->,ultra thick]  plot ({\y*\y}, {\y});
      \foreach \i in {-1,...,25}
      \foreach \j in {-10,...,10}
        \filldraw (\i,\j) circle(3pt);
  \draw[<->] (-1.5, 0) -- (26, 0) node[right] {$x_1$};
  \draw[<->] (0, -10) -- (0, 10) node[above] {$x_2$};
\end{tikzpicture}
 \caption{For $\alpha=1/2,$ we take the lattice points inside of the parabola $x_1=x_2^2$ as our set $K.$ This figure is the $x_1x_2$-plane that lives inside $\Z^4.$}
  \end{figure}

We first use Theorem \ref{psi-upperbd} to show that $\Z^4$ is $S$-transient with respect to $K$. Doubling of traces of balls in $\partial \Gamma$ can be seen by the following formula for $V_{\partial \Gamma}:$ 
\[  V_{\partial \Gamma}(\mathbf{x}, r) \approx \begin{cases} r^2, & r \leq |x_1|^\alpha \\ |x_1|^\alpha r, & |x_1|^\alpha < r < |x_1| \\ r^{\alpha+1}, & r \geq |x_1| \end{cases} \quad \text{ for } \mathbf{x}=(x_1,x_2,0,0) \in K.\]

For any point $\mathbf{x} \in \widehat{\Gamma},$ we have $V_{\widehat{\Gamma}}(\mathbf{x}, r) \approx r^4.$ For any $\mathbf{x} \in \Gamma := \Z^4 \setminus K,$ let $\mathbf{x}^* = (x_1^*, x_2^*, x_3^*, x_4^*)$ denote the unique point in $K$ that achieves $d(\mathbf{x}, K).$ 
Thus for any $r>0$ and any $\mathbf{x} \in \Gamma,$
\begin{align*}
\widetilde{W}(\mathbf{x}, r) \approx \begin{cases} r^2, & r \leq |x_1^*|^\alpha \\ \frac{r^3}{|x_1^*|^\alpha}, & |x_1^*|^\alpha < r < |x_1^*| \\ r^{3-\alpha}, & r \geq |x_1^*|. \end{cases}
\end{align*}

\begin{lemma}\label{parabola_trans} 
The graph $\Z^4$ is $S$-transient with respect to the parabola $K.$ Moreover, for a given $\varepsilon >0,$ we can pick $L_\varepsilon = L$ sufficiently large so that in the regime where $d_{\mathbf{x}} := d_{\Z^4}(\mathbf{x},K) \geq |x_1^*| \geq L,$ we have $\psi_K(\mathbf{x}) \leq 1 - \varepsilon.$
\end{lemma}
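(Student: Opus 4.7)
The plan is to apply Corollary \ref{simpler} to the formula for $\widetilde{W}$ already computed above the lemma statement. Since the corollary's hypotheses have been verified (two-sided Gaussian estimates on $\Z^4$ follow from it being Harnack, and the case analysis for $V_{\partial \Gamma}$ shown above makes clear that $V_{\partial \Gamma}$ is doubling), we get
\[
\psi_K(\mathbf{x}) \leq \sum_{n \geq d_{\mathbf{x}}^2} \frac{C}{\widetilde{W}(\mathbf{x},\sqrt{n})}.
\]
The three-case formula for $\widetilde{W}$ depends on how $\sqrt{n}$ compares with $|x_1^*|^\alpha$ and $|x_1^*|$, so the first simplification I would make is to observe that in the regime $d_{\mathbf{x}} \geq |x_1^*|$, every $n$ appearing in the tail sum satisfies $\sqrt{n} \geq d_{\mathbf{x}} \geq |x_1^*|$, which puts us uniformly in the third case $\widetilde{W}(\mathbf{x},\sqrt{n}) \approx n^{(3-\alpha)/2}$.

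Given this, the proof collapses to an elementary tail estimate. I would compute
\[
\psi_K(\mathbf{x}) \leq C\sum_{n \geq d_{\mathbf{x}}^2} \frac{1}{n^{(3-\alpha)/2}} \leq \frac{C'}{d_{\mathbf{x}}^{1-\alpha}},
\]
where the series converges because $\alpha \in (0,1)$ forces $(3-\alpha)/2 > 1$. Since $\alpha < 1$, the exponent $1-\alpha$ is strictly positive, so the right-hand side tends to zero as $d_{\mathbf{x}} \to \infty$. Given $\varepsilon > 0$, I choose $L = L_\varepsilon$ large enough that $C'/L^{1-\alpha} \leq \varepsilon$; then whenever $d_{\mathbf{x}} \geq |x_1^*| \geq L$ we have $\psi_K(\mathbf{x}) \leq \varepsilon \leq 1-\varepsilon$ (after possibly replacing $\varepsilon$ by $\min(\varepsilon, 1/2)$, which is harmless since we only need existence of such an $L$).

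The $S$-transience of $\Z^4$ with respect to $K$ is then an immediate consequence: any sequence of points $\mathbf{x}$ with $d_{\mathbf{x}}, |x_1^*| \to \infty$ (and $d_{\mathbf{x}} \geq |x_1^*|$) satisfies $\psi_K(\mathbf{x}) < 1$ eventually, so $\psi_K \not\equiv 1$.  I do not expect any serious obstacle: the main conceptual observation is simply that the stated regime forces us into the single clean case of $\widetilde{W}$, after which the integrability is driven purely by the inequality $\alpha < 1$. The subtle point worth flagging is that this argument only controls $\psi_K$ in the cone-like region $d_{\mathbf{x}} \geq |x_1^*|$; the complementary regime (close to the axis of the parabola, where $|x_1^*|$ is large compared to $d_{\mathbf{x}}$) is governed by the first two cases of $\widetilde{W}$ and would produce divergent tail sums, which is precisely why uniform $S$-transience fails globally and why the lemma restricts the regime.
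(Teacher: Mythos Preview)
Your proposal is correct and follows essentially the same route as the paper: apply Corollary \ref{simpler}, note that the regime $d_{\mathbf{x}} \geq |x_1^*|$ forces every term of the tail sum into the third case $\widetilde{W}(\mathbf{x},\sqrt{n}) \approx n^{(3-\alpha)/2}$, and estimate the resulting convergent series by $C/d_{\mathbf{x}}^{1-\alpha}$. Your additional remarks on why the argument does not extend to the complementary regime are also in line with the discussion the paper gives immediately after the proof.
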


\begin{proof}
Recall $\mathbf{x}^* = (x_1^*, x_2^*, x_3^*, x_4^*)$ is the point that achieves $d_{\Z^4}(\mathbf{x}, K).$ When $d_{\mathbf{x}} \geq |x_1^*|,$ 
\[ \psi_K(\mathbf{x}) \leq \sum_{n \geq d_{\mathbf{x}}^2} \frac{C}{\widetilde{W}(\mathbf{x}, \sqrt{n})} \approx \sum_{n \geq d_{\mathbf{x}}^2} \frac{1}{n^{(3-\alpha)/2}} \approx -\frac{1}{t^{(1-\alpha)/2}} \Big|_{t=d_{\mathbf{x}}^2}^\infty \approx \frac{1}{d_{\mathbf{x}}^{1-\alpha}} \leq \frac{1}{L^{1-\alpha}}. \]
Thus $\psi_K(\mathbf{x}) < 1- \varepsilon$ for $L_\varepsilon$ sufficiently large.  
\end{proof}

A key component of the proof of the above lemma was the assumption that $d_{\mathbf{x}} \geq |x_1^*|.$ If instead $|x_1^*|^\alpha \leq d_{\mathbf{x}} \leq |x_1^*|,$ then we have the bound
\begin{align}\label{parabola_middlecase}
\psi_K(\mathbf{x}) &\leq \sum_{n \geq  d_{\mathbf{x}}^2} \frac{C}{\widetilde{W}(\mathbf{x}, \sqrt{n})} = \sum_{n = d_{\mathbf{x}}^2}^{|x_1^*|^2} \frac{C |x_1^*|^\alpha}{n^{3/2}} + \sum_{n \geq |x_1^*|^2} \frac{C}{n^{(3-\alpha)/2}} \\[1ex]
&= \frac{c_a |x_1^*|^\alpha}{d_{\mathbf{x}}} - \frac{c_a}{|x_1^*|^{1-\alpha}} + \frac{c_b}{|x_1^*|^{1-\alpha}} ,
\end{align}
where the constants $c_a, c_b$ depend on the approximation of $\widetilde{W}$ and on the estimation of the sums above. We write these constants to emphasize that the $|x_1^*|^{\alpha-1}$ terms do not cancel. If instead $d_{\mathbf{x}} < |x_1^*|^\alpha,$ then there is a third term appearing in the estimate for $\psi_K$ given by Theorem \ref{psi-upperbd}/Corollary \ref{simpler}. 

Lemma \ref{parabola_trans} does not show that $\Z^4$ is uniformly $S$-transient with respect to the parabola $K$ since $d_{\mathbf{x}}$ and $|x_1^*|$ are related. Indeed, it is possible to pick a sequence of points $\{\mathbf{x}^m\}_{m\geq0}$ such that $d_{\mathbf{x}^m} \to \infty,$ but the bound in Theorem \ref{psi-upperbd} does not give useful information. To that end, consider points $\mathbf{x}^m = (x_1^m, x_2^m, x_3^m, x_4^m)$ that lie directly above the parabola so that $d_{\mathbf{x}^m} \approx x_3^m + x_4^m,$ which is independent of $x_1^m = (x_1^m)^*.$ Further, take $d_{\mathbf{x}^m} = |(x_1^m)^*|^\alpha$ for all $m.$ Then we are in the situation of (\ref{parabola_middlecase}) so that 
\begin{align*}
\psi_K(\mathbf{x}^m) \leq
\frac{c_a |(x_1^m)^*|^\alpha}{d_{\mathbf{x}^m}} - \frac{c_a}{|(x_1^m)^*|^{1-\alpha}} + \frac{c_b}{|(x_1^m)^*|^{1-\alpha}}  = c_a - \frac{c_a}{|(x_1^m)^*|^{1-\alpha}} + \frac{c_b}{|(x_1^m)^*|^{1-\alpha}}.
\end{align*}
Thus $\psi_K(\mathbf{x}^m) \to c_a$ as $d_{\mathbf{x}^m} = |(x_1^m)^*|^\alpha \to \infty.$ From this, we cannot conclude that $\psi_K(\mathbf{x}^m)$ tends to zero as $d_{\mathbf{x}^m} \to \infty$, and if $c_a \geq 1,$ this tells us no information on $\psi_K$ at all. Indeed, the appearance of the constant $c_a$ (essentially ``$1$") in the computation of the above sum indicates that Theorem \ref{psi-upperbd} will not give a useful bound in this regime. 

From Lemma \ref{parabola_trans}, we know that $\Gamma$ is $S$-transient and that  $h \approx 1$ in the region where $d(\mathbf{x}, K) \gg |x_1^*|$, since $\psi_K(\mathbf{x}) \leq 1- \varepsilon$ in this region. The two lemmas below capture how the results of Section \ref{h-trans} can improve our knowledge of $h$ as we approach the parabola in certain ways. 

\begin{lemma}\label{h_1_more}
For any $\mathbf{x} \in \Gamma$ satisfying $d_{\mathbf{x}} \gg |x_1^*|^\alpha \geq L,$ we have $h(\mathbf{x}) \approx 1.$ 
\end{lemma}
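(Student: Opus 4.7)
The plan is to leverage Lemma \ref{his1}, which (having already established in Lemma \ref{parabola_trans} that $\widehat{\Gamma} = \Z^4$ is $S$-transient with respect to $K$) identifies the profile as $h = 1 - \psi_K$. The upper bound $h(\mathbf{x}) \leq 1$ is then automatic, so the task reduces to producing a uniform lower bound: show that there exist $M, L$ large enough so that $\psi_K(\mathbf{x}) \leq 1/2$ (say) whenever $d_{\mathbf{x}} \geq M |x_1^*|^\alpha$ and $|x_1^*|^\alpha \geq L$.

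First I would interpret the notation $d_{\mathbf{x}} \gg |x_1^*|^\alpha$ quantitatively by fixing a large constant $M$ to be chosen at the end, and restricting to $\mathbf{x}$ with $d_{\mathbf{x}} \geq M |x_1^*|^\alpha$. I would then invoke Corollary \ref{simpler} using the explicit three-regime formula for $\widetilde{W}(\mathbf{x}, r)$ already established just before Lemma \ref{parabola_trans}. The lower cutoff $\sqrt{n} \geq d_{\mathbf{x}} \geq M|x_1^*|^\alpha$ eliminates the first regime of $\widetilde{W}$ entirely, so only the middle and upper regimes can contribute to the tail sum.

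I would then split into two cases according to the relative size of $d_{\mathbf{x}}$ and $|x_1^*|$. If $d_{\mathbf{x}} \geq |x_1^*|$, Lemma \ref{parabola_trans} directly gives $\psi_K(\mathbf{x}) \leq C/d_{\mathbf{x}}^{1-\alpha}$, which is bounded by $C/(M|x_1^*|^\alpha)^{(1-\alpha)} \leq C/L^{1-\alpha}$ (using $d_{\mathbf{x}} \geq |x_1^*|^\alpha \geq L$), and can be made arbitrarily small by taking $L$ large. If instead $|x_1^*|^\alpha \leq d_{\mathbf{x}} \leq |x_1^*|$, the bound \eqref{parabola_middlecase} gives
\[ \psi_K(\mathbf{x}) \leq C\left[\frac{|x_1^*|^\alpha}{d_{\mathbf{x}}} + \frac{1}{|x_1^*|^{1-\alpha}}\right] \leq C\left[\frac{1}{M} + \frac{1}{L^{(1-\alpha)/\alpha}}\right], \]
and both terms can be made arbitrarily small by choosing $M$ and $L$ large.

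Combining the two cases, for any prescribed $\varepsilon \in (0,1)$ we can choose $M_0, L_0$ so that $\psi_K(\mathbf{x}) \leq \varepsilon$ throughout the given regime, hence $h(\mathbf{x}) = 1 - \psi_K(\mathbf{x}) \in [1-\varepsilon, 1]$, which is exactly $h \approx 1$. I expect no real obstacle here beyond the bookkeeping of the case split, since all the ingredients (the explicit $\widetilde W$, the tail estimate in Lemma \ref{parabola_trans}, the middle-regime estimate \eqref{parabola_middlecase}, and the identification $h = 1-\psi_K$) are already in hand; the only subtlety is that the error terms in the middle regime do \emph{not} cancel, so one must track both $M^{-1}$ and $L^{-(1-\alpha)/\alpha}$ separately and kill them by the two independent hypotheses $d_{\mathbf{x}} \geq M|x_1^*|^\alpha$ and $|x_1^*|^\alpha \geq L$.
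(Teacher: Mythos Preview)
Your proposal is correct and follows essentially the same approach as the paper: both identify $h = 1 - \psi_K$ via Lemma~\ref{his1}, dispose of the regime $d_{\mathbf{x}} \geq |x_1^*|$ using Lemma~\ref{parabola_trans}, and handle the intermediate range $|x_1^*|^\alpha \lesssim d_{\mathbf{x}} \leq |x_1^*|$ by bounding the two surviving terms of \eqref{parabola_middlecase} using the largeness of $M$ (your notation; the paper's $\hat c$) and of $L$ respectively. Your write-up is in fact slightly more explicit than the paper's, which only checks the boundary case $d_{\mathbf{x}} \approx \hat c\,|x_1^*|^\alpha$ and relies implicitly on the monotonicity of the bound in $d_{\mathbf{x}}$.
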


\begin{lemma}\label{h_para_bdry}
Let $\mathbf{u}^* = (u_1, 0, 0, 0) \in K$ and $B = B_{\widehat{\Gamma}}(\mathbf{u}^*, \frac{1}{2} |u_1|^\alpha).$ Then there exists a constant $0<a<1$ such that 
\[ h(\mathbf{x}) \approx c \, \frac{\log(d_{\mathbf{x}})}{\log(\hat{c} |u_1|^\alpha)} \quad \forall \mathbf{x} \in B_\Gamma( \mathbf{u}^*, a |u_1|^\alpha). \]
\end{lemma}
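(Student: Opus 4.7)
The plan is to exploit that near $\mathbf u^*$ at the scale $|u_1|^\alpha$, the parabola $K$ is indistinguishable from the flat $2$-plane $P := \{(y_1, y_2, 0, 0) : y_1, y_2 \in \Z\} \subset \Z^4$. For a sufficiently small constant $a > 0$, the ball $\tilde B := B_{\widehat{\Gamma}}(\mathbf u^*, A_0 a |u_1|^\alpha)$ satisfies $K \cap \tilde B = P \cap \tilde B$: inside $\tilde B$ one has $y_1 \asymp u_1$ and hence $y_1^\alpha \gtrsim |u_1|^\alpha$, which exceeds $|y_2| \leq A_0 a |u_1|^\alpha$ once $a$ is small enough, making the parabola constraint automatic. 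On $\Gamma_P := \Z^4 \setminus P$ (which is inner uniform in the Harnack graph $\Z^4$), the pair $(x_3, x_4)$ executes a time-changed $2$D random walk whose potential kernel $\mathfrak a(\cdot)$ satisfies $\mathfrak a(0, 0) = 0$ and $\mathfrak a(\mathbf z) \asymp \log(|\mathbf z| + 1)$. The function $h_P(\mathbf x) := \mathfrak a(x_3, x_4)$ is then positive, harmonic on $\Gamma_P$, and vanishes on $P$; by uniqueness of the profile it is the harmonic profile of $\Gamma_P$, up to a multiplicative constant.

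Inside $\tilde B \cap \Gamma = \tilde B \cap \Gamma_P$, both $h$ and $h_P$ are positive harmonic functions vanishing on the common boundary $K \cap \tilde B$, so the boundary Harnack principle (Theorem~\ref{bdry_Harnack}) applied to the pair $(h, h_P)$ at some $\xi \in \partial_I \Gamma$ adjacent to $\mathbf u^*$ yields, for all $\mathbf x, \mathbf y \in B_\Gamma(\mathbf u^*, a |u_1|^\alpha)$,
\[
    \frac{h(\mathbf x)}{h(\mathbf y)} \asymp \frac{h_P(\mathbf x)}{h_P(\mathbf y)} \asymp \frac{\log(d_{\mathbf x} + 1)}{\log(d_{\mathbf y} + 1)}.
\]
Fixing the reference point $\mathbf y_0 := (u_1, 0, 0, \lfloor a|u_1|^\alpha/2\rfloor)$, for which $d_{\mathbf y_0} \asymp |u_1|^\alpha$, this reduces the lemma to a uniform lower bound $h(\mathbf y_0) \gtrsim 1$ independent of $u_1$. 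The upper bound $h(\mathbf y_0) \leq 1$ is immediate from $h = 1 - \psi_K$ by Lemma~\ref{his1} (valid because $\Gamma$ is $S$-transient by Lemma~\ref{parabola_trans}), and absorbing the various constants into $c$ and $\hat c$ converts $\log(d_\mathbf x + 1)/\log(|u_1|^\alpha)$ into the stated $c\log(d_\mathbf x)/\log(\hat c |u_1|^\alpha)$.

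The main obstacle is the uniform lower bound $h(\mathbf y_0) \gtrsim 1$. The plan is a Markov decomposition at scale $u_1$: with $\Omega := B_{\widehat{\Gamma}}(\mathbf u^*, u_1/2)$, the strong Markov property gives
\[
    h(\mathbf y_0) = \mathbf E^{\mathbf y_0}\bigl[\mathds 1_{\{\tau_{\partial \Omega} < \tau_K\}}\, h(X_{\tau_{\partial \Omega}})\bigr].
\]
Two estimates should combine: (i) viewing the walk through its $(x_3, x_4)$-projection as a time-changed $2$D random walk, the chance of reaching distance $\asymp u_1$ from $\mathbf u^*$ before the projection hits the origin (equivalently, before the walk hits $K$, which inside $\Omega$ agrees with $P$) is $\asymp \log(|u_1|^\alpha)/\log(u_1) = \alpha$, a positive constant; and (ii) conditional on this event, the harmonic measure on $\partial \Omega$ assigns positive mass to points $\mathbf z$ with $d_\mathbf z \asymp u_1$, where Lemma~\ref{h_1_more} applies to give $h(\mathbf z) \asymp 1$ (since $|z_1^*|^\alpha \lesssim u_1^\alpha \ll u_1 \asymp d_\mathbf z$). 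Combining yields $h(\mathbf y_0) \gtrsim \alpha \cdot 1$. The delicate step is (ii), namely quantifying the conditional exit distribution uniformly in $u_1$; I expect this to follow from iterating the same boundary-Harnack / potential-theory comparison at the scale $u_1$ (where the parabola is no longer flat but still has codimension $2$), but carrying this out cleanly is the technical heart of the argument.
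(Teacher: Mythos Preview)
Your overall architecture matches the paper's proof: identify that at scale $|u_1|^\alpha$ the set $K$ coincides with the $2$-plane $P$, take the explicit logarithmic profile $h_P$ of $\Z^4\setminus P$, and apply the boundary Harnack principle (Theorem~\ref{bdry_Harnack}) to compare $h$ with $h_P$ near $\mathbf u^*$. That part is correct and is exactly what the paper does.

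Where you diverge is in establishing the anchor estimate $h(\mathbf y_0)\asymp 1$. You attempt a Markov decomposition at the much larger scale $u_1$ and then need to control the conditional exit distribution on $\partial\Omega$, which you yourself flag as incomplete. This detour is unnecessary, and your proposed route to step~(ii) is vague: at scale $u_1$ the parabola is no longer a flat $2$-plane (indeed your parenthetical ``$K$, which inside $\Omega$ agrees with $P$'' is false there, since the strip has width $\asymp u_1^\alpha\ll u_1$), so there is no clean model profile to compare against, and controlling where on $\partial\Omega$ the conditioned walk exits is genuinely delicate.

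The paper avoids all of this by staying at scale $|u_1|^\alpha$. Lemma~\ref{h_1_more} already gives a point $\mathbf u=(u_1,0,u_3,u_4)$ with $d_{\mathbf u}\approx \hat c|u_1|^\alpha$ (for the constant $\hat c$ produced in that lemma's proof) at which $h(\mathbf u)\approx 1$. This $\mathbf u$ sits just outside the ball $B$, but since the ratio of $\hat c|u_1|^\alpha$ to $\tfrac12|u_1|^\alpha$ is a fixed constant independent of $u_1$, a \emph{bounded} number of applications of the elliptic Harnack inequality (in balls avoiding $K$) moves $\mathbf u$ into $B$ while preserving $h\approx 1$. That moved point then serves as the reference in the boundary Harnack comparison. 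In your notation, the same chain of Harnack balls connects your $\mathbf y_0$ (at distance $\tfrac{a}{2}|u_1|^\alpha$ from $K$) to a point at distance $\hat c|u_1|^\alpha$ where Lemma~\ref{h_1_more} applies directly; the number of steps is $O(\log(2\hat c/a))$, hence uniformly bounded. This replaces your entire scale-$u_1$ argument and closes the gap.
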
 

\begin{proof}[Proof of Lemma \ref{h_1_more}]
We already know this result for $\mathbf{x} \in \Gamma$ satisfying $d_{\mathbf{x}} \gg |x_1^*|$ due to Lemmas \ref{parabola_trans} and \ref{his1}. Therefore it suffices to consider $\mathbf{x} \in \Gamma$ such that $d_\mathbf{x} \approx \hat{c} |x_1^*|^\alpha$ for some constant $\hat{c}.$ In this region, by (\ref{parabola_middlecase}) and Lemma \ref{his1},
\begin{align*}
h(\mathbf{x}) = 1 -\psi_K(u) \geq 1 + \frac{c_a - c_b}{|x_1^*|^{1-\alpha}} - c_a \frac{|x_1^*|^\alpha}{d_{\mathbf{x}}}. 
\end{align*}
If $c_a > c_b$ we can ignore the middle term; otherwise, assume $|x_1^*|^\alpha \geq L$ where $L$ is large enough to ensure $(c_a-c_b)/|x_1^*|^{1-\alpha} \geq - 1/4.$ Also choose $\hat{c}$ so that $c_a/\hat{c} < 1/4.$ With these choices, for $\mathbf{x}$ satisfying $d_{\mathbf{x}} \approx |x_1^*|^\alpha \geq L,$
\[ h(\mathbf{x}) \geq \frac{3}{4} - \frac{c_a |x_1^*|^\alpha}{\hat{c} |x_1^*|^\alpha} \geq \frac{1}{2}.\]
Thus $h\approx 1$ whenever $d_{\mathbf{x}} \gg |x_1^*|^\alpha \geq L$ as desired. 

\end{proof}

\begin{proof}[Proof of Lemma \ref{h_para_bdry}]
As this result is about points near the parabola $K,$ we use the boundary Harnack inequality. Given $\mathbf{u}^* = (u_1, 0, 0, 0) \in K,$ take $\mathbf{u} = (u_1, 0, u_3, u_4)$ such that $d_{\mathbf{u}} \approx |u_1|^\alpha$ and $h(\mathbf{u}) \approx 1$ as in Lemma \ref{h_1_more}. Note $\mathbf{u^*}$ is the projection of $\mathbf{u}$ onto $K.$ 

As $\widehat{\Gamma}$ is Harnack and $h$ is harmonic inside $\Gamma,$ by applying the elliptic Harnack inequality a finite number of times, we find a point (which we continue to call $\mathbf{u}$) such that $h(\mathbf{u}) \approx 1$ and $\mathbf{u}$ lies in $B = B_{\widehat{\Gamma}}(\mathbf{u}^*, \frac{1}{2} |u_1|^\alpha)$.

From the perspective of $B,$ we cannot tell that $K$ is not the entire $x_1x_2$-plane. As in $B$ we are looking at a two-dimensional ball inside of four-dimensional space, we know there is a positive harmonic function $f$ in $B$ that is zero on the intersection of $B$ with $K$ such that $f(\mathbf{x}) \approx \log (|x_3|^2 + |x_4|^2) \approx \log(d(\mathbf{x}, K)^2).$ 

Therefore, by the boundary Harnack inequality (Theorem \ref{bdry_Harnack}), 
\begin{align*}
\frac{f(\mathbf{x})}{f(\mathbf{u})} \leq A_1 \frac{h(\mathbf{x})}{h(\mathbf{u})} \implies c \, \frac{\log(d_{\mathbf{x}}^2)}{\log(d_{\mathbf{u}}^2)} = c \, \frac{\log(d_{\mathbf{x}})}{\log(\hat{c} |u_1|^\alpha)} \leq h(\mathbf{x}) \quad \forall \mathbf{x} \in B(\mathbf{u}^*, \frac{1}{2A_0}|u_1|^\alpha). 
\end{align*}
As we may also apply boundary Harnack in the other direction, we conclude 
\[ h(\mathbf{x}) \approx c \, \frac{\log(d_{\mathbf{x}})}{\log(\hat{c} |u_1|^\alpha)} \]
on a ball of radius strictly smaller than that of $B$ (but comparable to $|u_1|^\alpha$). 

\end{proof} 

The two lemmas above give a wide region where we understand $h$. However, there is still a ``bad region" of points where the behavior of $h$ remains unknown. For any $\mathbf{x} \in \Gamma,$ recall $\mathbf{x}^* = (x_1^*, x_2^*, x_3^*, x_4^*)$ is its projection on to $K.$ The behavior of $h$ is not known for points $\mathbf{x} \in \Gamma$ where $d_{\mathbf{x}} \ll |x_1^*|^\alpha$ and $|x_2^*| \gg |x_1^*|^\alpha,$ that is for points that neither Lemma \ref{h_1_more} nor Lemma \ref{h_para_bdry} apply to. (Lemma \ref{h_para_bdry} can be applied to points near the parabola but that do not get too close to its ``edge." Repeated applications of boundary Harnack could get similar results to hold in balls with centers of the form $\mathbf{u}^* = (u_1, u_2, 0,0)$ as long as $u_2$ is sufficiently small compared to $u_1.$) These bad points lie in a tube around the parabola of radius comparable to $|x_1^*|^\alpha.$  

Lemma \ref{h_para_bdry} shows that $h \not \approx 1$ for points close to the middle of the parabola, so along with Lemma \ref{his1}, this shows $\Z^4$ is not uniformly $S$-transient with respect to the parabola.

\section{Connections with Wiener's Test}\label{wiener_proof}

In many situations, Wiener's test gives an optimal way for determining classical transience/recurrence of a set $S \subset \Gamma$, where transience is taken to mean $\bP^x(X_n \in S \text{ i.o.}) =0$ and recurrence means $\bP^x(X_n \in S \text{ i.o.}) >0$. In many cases (such as for the SRW on $\Z^d$), a $0-1$ law holds for these probabilities, but such a $0-1$ law does not hold in the general setting considered in this paper. 

Below we give the version of Wiener's test in the case of interest to us. See, for example, \cite{Barlow_graphs, Lamperti_wiener, Revelle_Recurrence, Uchiyama_wiener} for statements of Wiener's test in various settings.

\begin{theorem}[Wiener's test for Harnack Graphs]\label{Wiener_Harnack}
Let $(\Gamma, \mathcal{K}, \pi)$ be a Harnack graph with controlled weights. Let $(X_n)_{n\geq0}$ denote the process on the graph.

Assume that $\Gamma$ is transient in the sense that $\bP^x (X_n = x \text{ i.o.}) =0$ for some/all $x \in \Gamma.$ Fix $o \in \Gamma$ and let $A_k := B_\Gamma (o, a^{k+1}) \setminus B_\Gamma(o, a^k)$ for some constant $a$.

Then there exists $a >1$ such that for any set $S \subset \Gamma,$ 
\begin{equation}\label{wiener2} \bP^o (X_n \in S \text{ i.o.}) = 0 \iff \sum_{k=1}^\infty \frac{\capac(S \cap A_k)}{\capac(A_k)} < + \infty.\end{equation}
Here $\capac$ denotes the capacity, defined as 
\[\capac(S) =  \sum_{y \in S} e_S(y),\]
where 
\[ e_S(y) =\begin{cases} \bP^y ( \forall n \geq 1, \ X_n \not \in S), & y \in S \\ 0, & y \not \in S\end{cases}\]
is the equilibrium potential of $S$.

Further, if $V$ denotes the volume function on $\Gamma,$ and $y \in A_k$ such that $d(y, \partial A_k) \approx a^k,$ then 
\begin{equation}\label{cap_ann} 
\capac(A_k) \approx \Big[\sum_{n=a^{2k}}^\infty \frac{1}{V(y, \sqrt{n})}\Big]^{-1}.
\end{equation}
\end{theorem}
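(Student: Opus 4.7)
The plan is to route everything through the Green function $G(x,y) := \sum_{n \geq 0} \mathcal{K}^n(x,y) = \pi(y) \sum_n p_n(x,y)$, which is finite by transience. The two-sided Gaussian estimates of Theorem~\ref{VD_PI} together with a dyadic decomposition (entirely parallel to the one carried out in the proof of Theorem~\ref{psi-upperbd}) give
\[ G(x,y) \approx \pi(y) \sum_{n \geq d(x,y)^2} \frac{1}{V(x,\sqrt{n})}. \]
A last-exit decomposition yields the potential-theoretic identity
\[ \bP^x(\tau_K < \infty) = \sum_{y \in K} G(x,y)\, e_K(y), \qquad x \notin K, \]
which will be the workhorse of the entire argument.

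First I would establish the annulus estimate \eqref{cap_ann}. Choose $a > 1$ large enough that a constant number of elliptic Harnack iterations along balls contained in $A_k \setminus \{o\}$ yields $G(o,y) \approx G(o, y_k^*)$ uniformly for $y \in A_k$ (the constant depending only on $a$ and the Harnack constant). Because $\Gamma$ is connected and transient, the Gaussian lower bound forces $d(X_n,o) \to \infty$ almost surely, so the walk crosses every annulus and $\bP^o(\tau_{A_k} < \infty) = 1$. Substituting this into the last-exit identity gives
\[ 1 \;=\; \sum_{y \in A_k} G(o,y)\, e_{A_k}(y) \;\approx\; G(o, y_k^*)\, \capac(A_k), \]
so $\capac(A_k) \approx G(o, y_k^*)^{-1}$. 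Combining this with the Green function estimate and volume doubling ($V(o,\sqrt{n}) \approx V(y_k^*,\sqrt{n})$ once $\sqrt{n} \geq a^k$) establishes \eqref{cap_ann}.

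Next I would turn to \eqref{wiener2}. Setting $T_k := \tau_{S \cap A_k}$ and repeating the argument of the previous paragraph with $S \cap A_k$ in place of $A_k$ yields
\[ \bP^o(T_k < \infty) \;\approx\; G(o, y_k^*)\, \capac(S \cap A_k) \;\approx\; \frac{\capac(S \cap A_k)}{\capac(A_k)}. \]
Since $d(X_n,o) \to \infty$ almost surely, every visit of $(X_n)$ to $S$ lies in some annulus $A_k$, and each annulus is visited during a finite initial time interval and then abandoned forever; consequently $\{X_n \in S \text{ i.o.}\}$ coincides almost surely with $\{T_k < \infty \text{ i.o. in } k\}$. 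The first Borel--Cantelli lemma then delivers the easy direction of \eqref{wiener2}: convergence of $\sum_k \capac(S \cap A_k)/\capac(A_k)$ forces $\bP^o(X_n \in S \text{ i.o.}) = 0$.

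The hard part will be the converse, and the main obstacle is producing an almost-independence estimate strong enough for a second Borel--Cantelli argument. I would use the Kochen--Stone form, which reduces the task to the pairwise bound
\[ \bP^o(T_j < \infty,\, T_k < \infty) \leq C\, \bP^o(T_j < \infty)\, \bP^o(T_k < \infty) \qquad (j < k,\ k \geq j+2), \]
the finitely many nearby pairs being absorbed in the constant. By the strong Markov property applied at $\min(T_j,T_k)$, this reduces in turn to the uniform comparison
\[ \sup_{y \in A_j} \bP^y(T_k < \infty) \leq C\, \bP^o(T_k < \infty) \qquad (k \geq j+2), \]
together with its mirror image. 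Each such inequality is proved by expanding both sides via the last-exit identity and comparing $G(y,z)$ for $y \in A_j,\ z \in S \cap A_k$ to $G(o, y_k^*)$; the triangle inequality and volume doubling reduce the comparison to the observation that $d(y,z) \approx a^k$ when $k \geq j+2$, which is immediate from the definition of the annuli. Combining the pairwise bound with Kochen--Stone produces a uniform positive lower bound for $\bP^o(T_k < \infty \text{ i.o.})$ whenever the sum diverges, which, via the coincidence of events from the previous paragraph, completes \eqref{wiener2}.
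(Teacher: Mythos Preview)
Your proposal is correct and is precisely the standard argument the paper defers to: the paper gives no proof of its own but simply cites Barlow's Theorem~7.23, and your sketch---Green-function estimate from the Gaussian bounds, last-exit identity, first Borel--Cantelli for the easy direction, Kochen--Stone with a pairwise decorrelation bound for the converse---is exactly that argument adapted to a general doubling volume. One small correction: the elliptic Harnack inequality applies to the symmetric Green \emph{density} $g(o,\cdot)=G(o,\cdot)/\pi(\cdot)$, which is harmonic away from $o$, not to $G(o,\cdot)$ itself; tracking the resulting factor of $\pi(y)$ through the last-exit identity yields the weighted capacity $\sum_{y}\pi(y)\,e_S(y)$ (Barlow's convention), which coincides with the paper's unweighted formula in the constant-$\pi$ examples it actually applies the theorem to.
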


This theorem follows by repeating the proof of Theorem 7.23 in \cite{Barlow_graphs} with a few modifications to account for the different form of assumed heat kernel bounds on $\Gamma$ here. 
 
There are key differences between Wiener's test and the questions we addressed in the main part of the paper. First, the definitions of transience used do not align. In this paper, we defined transience as $\psi_K(x) < 1$ for some/all $x \in \Gamma:= \widehat{\Gamma} \setminus K$. Wiener's test takes transience to be $\bP^x(X_n \in K \text{ i.o.}) =0$ for all $x \in \widehat{\Gamma}.$ These may not be the same, and Wiener's test does not account for uniform $S$-transience (see Example \ref{para_3} below), which is of much interest to us. Further, Wiener's test does not care about where the walk is started. However, we are only interested in starting the walk outside of the set $K.$ There may be cases where the random walk started well inside of $K$ is unlikely to ever leave $K,$ but a random walk started outside of $K$ may have a positive chance to never visit $K.$ 
 
 \begin{example}[Applying Wiener's test to the parabola example]\label{para_3} 
 Again, consider the ``parabola" $K$ inside a lattice $\Z^4$ as in Section \ref{para_Z4}.
 
 In the case of the lattice $\Z^d,$ we can take $a =2.$ We do this here to emphasize Theorem \ref{Wiener_Harnack} is a generalization of the classical formulation of Wiener's test for $\Z^d.$ 
 
 First, by (\ref{cap_ann}), we have 
 \[ \capac(A_k) \approx \Big[\sum_{n=2^{2k}}^\infty \frac{1}{V(y, \sqrt{n})}\Big]^{-1} \approx \Big[\sum_{n=2^{2k}}^\infty \frac{1}{n^2}\Big]^{-1} \approx 2^{2k}. \] 
 
The intersection of the parabola and the (4-dimensional) annulus, $K \cap A_K$, is contained inside a two-dimensional rectangle $R_k$ of length approximately $2^k$ and width approximately $2^{\alpha k},$ where $\alpha$ determines the shape of the parabola, i.e. $\alpha =1/2$ for an actual parabola. Since in $\Z^4$ the capacity of a point is a positive constant, if $|R_k|$ denotes the number of points in $R_k,$ then
 \begin{align*}
 \capac(S \cap A_k) \leq \capac(R_k) \leq c |R_k| \leq c 2^{k + k\alpha}.
 \end{align*} 
 
 Therefore
 \begin{align*}
 \sum_{k =0}^\infty \frac{\capac(S \cap Q_k)}{2^{2k}} \leq c \sum_{k =0}^\infty \frac{2^{k+k\alpha}}{2^{2k}} = c \sum_{k=0}^\infty \frac{1}{2^{(1-\alpha)k}} < \infty \quad \text{ since } \alpha \in (0,1).
 \end{align*}

Therefore $\Z^4 \setminus K$ is transient in the sense of Wiener's test and is $S$-transient, but it is not uniformly $S$-transient. This shows that Wiener's test is not sufficient for our purposes. 
 
 \end{example}

%%%%%%%%%%%%%%%%%%%%%%%%%%%%%%%%%%%%%%%%%%%%%%%%%%%%%%%%%%%%%%%%%%%
%%                                                               %%
%% Supplementary Material, if any, should be provided in         %%
%% {supplement} environment  with title and short description.   %%
%%                                                               %%
%%%%%%%%%%%%%%%%%%%%%%%%%%%%%%%%%%%%%%%%%%%%%%%%%%%%%%%%%%%%%%%%%%%

%\begin{supplement}
%\stitle{Title of Supplement A.}
%\sdescription{Short description of Supplement A.}
%\end{supplement}
%\begin{supplement}
%\stitle{Title of Supplement B.}
%\sdescription{Short description of Supplement B.}
%\end{supplement}

%%%%%%%%%%%%%%%%%%%%%%%%%%%%%%%%%%%%%%%%%%%%%%%%%%%%%%%%%%%%%%%%%%%
%%                                                               %%
%% Use the two commands below for producing your bibliography    %%
%% with bibtex, then comment again the commands and include the  %%
%% content of the .bbl file in this file below the commands.     %%
%%                                                               %%
%%%%%%%%%%%%%%%%%%%%%%%%%%%%%%%%%%%%%%%%%%%%%%%%%%%%%%%%%%%%%%%%%%%

%\bibliographystyle{amsplain}
%\bibliography{hittingprob_refs}

% add below the content of your .bbl file produced by bibtex.

\providecommand{\bysame}{\leavevmode\hbox to3em{\hrulefill}\thinspace}
\providecommand{\MR}{\relax\ifhmode\unskip\space\fi MR }
% \MRhref is called by the amsart/book/proc definition of \MR.
\providecommand{\MRhref}[2]{%
  \href{http://www.ams.org/mathscinet-getitem?mr=#1}{#2}
}
\providecommand{\href}[2]{#2}

\end{document}